\numberwithin{equation}{section}
\newcommand{\uhi}{u_{\text{\upshape hi}}}
\newcommand{\ulo}{u_{\text{\upshape lo}}}
\newcommand{\PHi}{P_\text{\upshape hi}}
\newcommand{\PLo}{P_\text{\upshape lo}}
\newcommand{\Pj}{P_{\leq\lambda_j^{-1}}}
\newcommand{\Pjm}{P_{\lambda_j^{-1}<\cdot\leq cN(t)}}
\newcommand{\Pc}{P_{>cN(t)}}
\let\Re=\undefined\DeclareMathOperator*{\Re}{Re}
\let\Im=\undefined\DeclareMathOperator*{\Im}{Im}
\newcommand{\R}{\mathbb{R}}
\newcommand{\C}{\mathbb{C}}
\newcommand{\wh}[1]{\widehat{#1}}
\newcommand{\M}{\mathcal{M}}
\newcommand{\F}{\mathcal{F}}
\newcommand{\Z}{\mathbb{Z}}
\newcommand{\nsc}{|\nabla|^{\frac32}}
\newtheorem{theorem}{Theorem}[section]
\newtheorem{lemma}[theorem]{Lemma}
\newtheorem{corollary}[theorem]{Corollary}
\newtheorem{proposition}[theorem]{Proposition}
\theoremstyle{definition}
\newtheorem{definition}[theorem]{Definition}
\newtheorem{remark}[theorem]{Remark}
\theoremstyle{remark}
\begin{document}

\title[4$d$ quintic NLS]{The defocusing quintic NLS in four space dimensions}
\author[B. Dodson]{Benjamin Dodson}
\address{Department of Mathematics, Johns Hopkins University, Baltimore, MD}
\email{bdodson4@jhu.edu}

\author[C. Miao]{Changxing Miao}
\address{Institute of Applied Physics and Computational Mathematics,
P. O. Box 8009,\ Beijing,\ China,\ 100088}
\email{miao\_changxing@iapcm.ac.cn}

\author[J. Murphy]{Jason Murphy}
\address{Department of Mathematics,
University of California, Berkeley, USA}
\email{murphy@math.berkeley.edu}

\author[J. Zheng]{Jiqiang Zheng}
\address{Universit\'e Nice Sophia-Antipolis, 06108 Nice Cedex 02, France}
\email{zhengjiqiang@gmail.com, zheng@unice.fr}

\begin{abstract} We consider the defocusing quintic nonlinear Schr\"odinger equation in four space dimensions.  We prove that any solution that remains bounded in the critical Sobolev space must be global and scatter. We employ a space-localized interaction Morawetz inequality, the proof of which requires us to overcome the logarithmic failure in the double Duhamel argument in four dimensions.
\end{abstract}

\maketitle


\section{Introduction}
We consider the defocusing quintic nonlinear Schr\"odinger equation (NLS) in four space dimensions: 
\begin{equation}\label{nls}
\begin{cases}
(i\partial_t+\Delta)u=|u|^4 u \\
u(0)=u_0\in\dot H_x^{\frac32}(\R^4),
\end{cases}
\end{equation}
with $u:\R_t\times\R_x^4\to\C$. The equation \eqref{nls} is called $\dot H_x^{\frac32}$-critical because the rescaling that preserves the class of solutions, namely $u(t,x)\mapsto\lambda^{\frac12}u(\lambda^2 t,\lambda x)$, leaves invariant the $\dot H_x^{\frac32}$-norm of the initial data. 

We prove that any solution to \eqref{nls} that remains bounded in the critical Sobolev space, namely $\dot H_x^{\frac32}(\R^4)$, must be global and scatter.  In \cite{MMZ}, we proved the analogous statement for \eqref{nls} with the nonlinearity $|u|^p u$ for $2<p<4$. In this paper, we treat the endpoint $p=4$, where the techniques in \cite{MMZ} break down. 

We start with some definitions. A function $u:I\times\R^4\to\C$ is a \emph{solution} to \eqref{nls} if it belongs to $C_t\dot{H}_x^{\frac32}(K\times\R^4)\cap L_{t,x}^{12}(K\times\R^4)$ for any compact $K\subset I$ and obeys the Duhamel formula
$$
u(t)=e^{i(t-t_0)\Delta}u(t_0)-i\int_{t_0}^t e^{i(t-s)\Delta}(| u|^4 u)(s)\,ds
$$
for each $t,t_0\in I$. We call $I$ the \emph{lifespan} of $u$. We call $u$ a \emph{maximal-lifespan solution} if it cannot be extended to any strictly larger interval. We call $u$ \emph{global} if $I=\R$. 

We define the \emph{scattering size} of a solution $u:I\times\R^4\to\C$ by 
$$
S_I(u):=\iint_{I\times\R^4}| u(t,x)|^{12}\,dx\,dt.
$$
If there exists $t_0\in I$ such that $S_{[t_0,\sup I)}(u)=\infty$ we say that $u$ \emph{blows up forward in time}. If there exists $t_0\in I$ such that $S_{(\inf I,t_0]}(u)=\infty$ we say that $u$ \emph{blows up backward in time}.

If $u$ is global and obeys $S_\R(u)<\infty$, then standard arguments show that $u$ \emph{scatters}, that is, there exist $u_\pm\in \dot{H}_x^{\frac32}(\R^4)$ such that
	$$\lim_{t\to\pm\infty} \|u(t)-e^{it\Delta}u_{\pm}\|_{\dot{H}_x^{\frac32}(\R^4)}=0.$$ 

Our main result is the following theorem.

\begin{theorem}\label{T}
Suppose $u:I\times\R^4\to\C$ is a maximal-lifespan solution to \eqref{nls} such that $u\in L_t^\infty\dot{H}_x^{\frac32}(I\times\R^4).$ Then $u$ is global and scatters, with 
$$
S_\R(u)\leq C\bigl(\|u\|_{L_t^\infty\dot H_x^{\frac32}(\R\times\R^4)}\bigr)
$$
for some function $C:[0,\infty)\to[0,\infty)$.
\end{theorem}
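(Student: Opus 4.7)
The plan is to execute the Kenig--Merle concentration-compactness/rigidity scheme along the lines of \cite{MMZ}, the key novelty being a space-localized interaction Morawetz inequality designed to survive at the endpoint $p=4$, where the standard four-dimensional double Duhamel argument fails by a logarithm.

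First I would develop the local theory at critical regularity (Strichartz estimates, small-data scattering, and stability in $\dot H^{3/2}_x$), and define the threshold $L(E):=\sup\{S_\R(u):\|u\|_{L^\infty_t\dot H^{3/2}_x}\leq E\}$. If Theorem~\ref{T} failed, $L$ would blow up at some finite $E_c$, and a linear/nonlinear profile decomposition adapted to $\dot H^{3/2}_x(\R^4)$ combined with a Palais--Smale reduction would produce a minimal counterexample $u_c$ that is almost periodic modulo scaling and translation: there exist $N:I\to(0,\infty)$, $x:I\to\R^4$, and a modulus $C(\eta)$ so that
\[
\int_{|\xi|>C(\eta)N(t)}|\xi|^3|\wh{u_c}(t,\xi)|^2\,d\xi+\int_{|x-x(t)|>C(\eta)/N(t)}\bigl|\nsc u_c(t,x)\bigr|^2\,dx\leq\eta.
\]
By standard symmetry reductions one may assume $u_c$ is forward-global with $N(t)\leq 1$ on $[0,\infty)$, and a low-frequency Duhamel / negative regularity argument driven by the almost-periodicity would upgrade $u_c$ to lie in $\dot H^s_x$ for some $s<3/2$. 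This additional regularity is what enables a Morawetz identity to even make sense.

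The second stage is the space-localized interaction Morawetz inequality: using a weight $a(x)$ truncated at the spatial scale $1/N(t)$, I would differentiate the associated Morawetz action along the flow and extract a positive spacetime lower bound of the schematic form
\[
\int_0^T\!\!\int_{|x-x(t)|\lesssim 1/N(t)}|u_c(t,x)|^{q}\,dx\,dt\lesssim 1+\text{error},
\]
for a suitable $q$. Combined with an a priori divergent lower bound furnished by almost periodicity in each of the remaining enemy scenarios, this would rule out $u_c$, provided the error term can be absorbed.

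The principal obstacle, and the reason the proof of \cite{MMZ} breaks down at $p=4$, is the control of the error term, which after the usual commutator manipulations requires estimating a high-low frequency interaction via the double Duhamel identity
\[
\langle u_c(t),u_c(t')\rangle=\iint_{s>t,\,s'<t'}\bigl\langle e^{i(t-s)\Delta}(|u_c|^4 u_c)(s),\,e^{i(t'-s')\Delta}(|u_c|^4 u_c)(s')\bigr\rangle\,ds\,ds'.
\]
In $\R^4$ the dispersive kernel decays only like $|t-s|^{-2}$, and after pairing with Littlewood--Paley projectors $\Pj$ at the localization threshold $\lambda_j^{-1}$ one encounters the marginally divergent integral $\int_1^\infty t^{-1}\,dt$. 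To close the estimate I would further split the intermediate frequencies via $\Pjm$ and the high frequencies via $\Pc$, and exploit bilinear frequency-separated Strichartz estimates so that the logarithmic blow-up can be absorbed against a small factor supplied by the frequency concentration of $u_c$ away from scale $\lambda_j^{-1}$. This renormalized double Duhamel analysis is the heart of the argument and its most delicate step.
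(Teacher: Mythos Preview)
Your high-level architecture---concentration-compactness reduction to an almost periodic minimal blowup solution, followed by rigidity via a space-localized interaction Morawetz estimate---matches the paper. However, the rigidity stage diverges from the paper's argument in essential ways, and the mechanism you propose for the logarithmic failure would not, as stated, close the estimate.

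First, a structural omission: the paper splits the almost periodic solution into two scenarios according to whether $\int_0^{T_{max}} N(t)^{-3}\,dt$ is finite (rapid frequency-cascade) or infinite (quasi-soliton), and these are ruled out by entirely different arguments. The cascade case uses long-time Strichartz estimates and conservation of mass, with no Morawetz input; only the quasi-soliton is killed by Morawetz. Your ``divergent lower bound'' is only divergent in the quasi-soliton case, so without the separate cascade argument the scheme is incomplete.

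Second, and more seriously, your proposed cure for the logarithmic failure---absorbing the logarithm via bilinear frequency-separated Strichartz and smallness from frequency concentration---is not what the paper does and is not obviously viable. The paper's mechanism has three ingredients absent from your proposal. (i) The Morawetz weight is rescaled by a \emph{time-dependent} function, $a(t,x)=n(t)^{-1}w(n(t)|x|)$; this introduces a new error term involving $\partial_t a$, and to control it one cannot take $n=N$ (no control on $N'$). Instead the paper runs Dodson's smoothing algorithm on $n_0(t):=\|\uhi(t)\|_{L^4_x}^{-2}$ to produce $n_m$ with $\int |n_m'|\,n_m^{-6}\,dt$ under control. (ii) The weight $w$ is not a simple cutoff at scale $1/N(t)$: it equals $|x|$ for $|x|\leq R$, is constant for $|x|>Re^J$ with $J\sim\log R$, and obeys $|\partial_r^k w_r|\lesssim_k J^{-1}r^{-k}$ in between. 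The $J^{-1}$ in the derivatives of $a$ is exactly what \emph{cancels} the factor $J$ produced by the double-Duhamel estimate (Proposition~\ref{L:mass}, Corollary~\ref{C:mass}); the logarithm is not absorbed but structurally removed. (iii) After cancellation, smallness of the residual errors comes either from almost periodicity at low frequencies, cf.~\eqref{APsmall}, or from a Bernstein gain at high frequencies exploiting that \eqref{nls} is \emph{energy-supercritical}: the error terms carry at most one derivative while one controls $\dot H^{3/2}_x$, so $\lambda_j^{-1/2}\|\nabla P_{>cN(t)}u\|_{L^2_x}$ is small because $\lambda_j N(t)\gg 1$. None of (i)--(iii) appears in your outline, and bilinear Strichartz alone does not manufacture the needed structural cancellation.

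A minor point: the paper normalizes to $N(t)\geq 1$ rather than $N(t)\leq 1$, and the accompanying ``extra regularity'' is additional $L^q_x$ decay (Proposition~\ref{P:decay}) rather than membership in $\dot H^s_x$ for $s<\tfrac32$; genuine negative regularity is established only inside the cascade argument, as output rather than input.
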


The motivation for Theorem~\ref{T} comes from the global well-posedness and scattering results for the mass- and energy-critical NLS. In space dimension $d$, the equation $(i\partial_t+\Delta)u=\pm|u|^p u$ is \emph{mass-critical} for $p=\frac4d$ and \emph{energy-critical} for $p=\frac{4}{d-2}$.  These cases are distinguished by the presence of a conservation law at the critical regularity.  For the mass-critical case, the critical space is $L_x^2(\R^d)$, and the conserved quantity is the \emph{mass}:
$$
M[u(t)]:=\int_{\R^d} |u(t,x)|^2 \,dx.
$$
For the energy-critical case, the critical space is $\dot H_x^1(\R^d)$, and the conserved quantity is the \emph{energy}:
$$
E[u(t)]:=\int_{\R^d} \tfrac12 |\nabla u(t,x)|^2\pm\tfrac{1}{p+2}|u(t,x)|^{p+2}\,dx.
$$

For the defocusing mass- and energy-critical NLS, arbitrary initial data in the critical space lead to global solutions that scatter; in the focusing case, one has global well-posedness and scattering results below the `ground state' \cite{Bo99a, CKSTT07, Dodson3, Dodson2, Dodson1, Dodson, Dodson14, Grillakis, KM, KTV2009, KV20101, KV3D, KVZ2008, RV, TaoRadial, TVZ2007, Visanphd, Visan2007, Visan2011}.  A major obstacle to solving these problems was the lack of any monotonicity formulae (i.e. Morawetz estimates) that scale like the mass or energy. The key breakthrough was the \emph{induction on energy} method of Bourgain \cite{Bo99a}: by finding solutions that concentrate on a characteristic length scale (and hence break the scaling symmetry of the equation), the available Morawetz estimates can be brought back into the picture, despite their non-critical scaling. These ideas and techniques have been developed extensively in the setting of \emph{concentration compactness} and \emph{minimal counterexamples}, as in the pioneering work of Kenig and Merle \cite{KM}.

A key ingredient for the mass- and energy-critical problems is the \emph{a priori} uniform control over solutions in the critical Sobolev space afforded by conservation laws. For the case of NLS at `non-conserved critical regularity', one has no such \emph{a priori} control; however, the success of the techniques developed to treat the mass- and energy-critical problems suggests that this should be the \emph{only} missing ingredient for a proof of global well-posedness and scattering. Indeed, previous works have shown that critical $\dot H_x^s$-bounds imply scattering for NLS for a range of dimensions and nonlinearities \cite{KM2010, KV2010, MMZ, Mu, Mu2, Mu3, XF}. In \cite{MMZ}, the authors treated the nonlinearity $|u|^p u$ for $2<p<4$ in four space dimensions. In this paper, we address the endpoint $p=4$. 

\subsection{Outline of the proof of Theorem~\ref{T}}
We argue by contradiction, supposing that Theorem~\ref{T} fails. As standard local well-posedness results (via Strichartz estimates and contraction mapping, cf. \cite{Cav, CaW, KVnote}) give global existence and scattering for sufficiently small initial data, we deduce the existence of a critical threshold, below which Theorem~\ref{T} holds but above which we can find solutions with arbitrarily large scattering size. By a limiting argument (see below), we deduce the existence of minimal counterexamples, that is, blowup solutions living at the threshold. As a consequence of minimality, these counterexamples have good compactness properties, specifically, almost periodicity modulo the symmetries of the equation.

\begin{definition}[Almost periodic] A solution $u:I\times\R^4\to\C$ to \eqref{nls} is \emph{almost periodic (modulo symmetries)} if $u\in L_t^\infty\dot H_x^{\frac32}(I\times\R^4)$ and there exist functions $N:I\to\R^+$, $x:I\to\R^4$, and $C:\R^+\to\R^+$ such that for $t\in I$ and $\eta>0$,
$$
\int_{| x-x(t)|>\frac{C(\eta)}{N(t)}}|\nsc u(t,x)|^2\,dx+\int_{| \xi|>C(\eta)N(t)}|\xi|^3|\wh{u}(t,\xi)|^2\,dx<\eta.
$$ 
We call $N(\cdot)$ the \emph{frequency scale}, $x(\cdot)$ the \emph{spatial center}, and $C(\cdot)$ the \emph{compactness modulus}. 
\end{definition}

\begin{remark} Equivalently, $u:I\times\R^4\to\C$ is almost periodic if and only if
$$
\{u(t):t\in I\}\subset\{\lambda^{\frac12}f(\lambda(x+x_0)):\lambda\in(0,\infty),\ x_0\in\R^4,\ f\in K\}
$$
for some compact $K\subset\dot H_x^{\frac32}(\R^4).$ From this, we can deduce that there also exists $c:\R^+\to\R^+$ such that
\begin{equation}\label{APsmall}
\|\nsc u_{\leq c(\eta)N(t)}\|_{L_t^\infty L_x^2(I\times\R^4)}<\eta.
\end{equation}
\end{remark}

The first major step in the proof of Theorem~\ref{T} is the following.

\begin{theorem}[Reduction to almost periodic solutions]\label{T:AP1} If Theorem~\ref{T} fails, then there exists a maximal-lifespan solution $u:I\times\R^4\to\C$ to \eqref{nls} that is almost periodic and blows up in both time directions.
\end{theorem}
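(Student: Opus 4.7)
The plan is to argue by contradiction and carry out the now-standard concentration-compactness/minimal counterexample construction pioneered by Kenig--Merle, adapted to the non-conserved critical $\dot H_x^s$ setting as in \cite{KM2010, KV2010, MMZ}. Under the failure hypothesis, small-data theory and monotonicity produce a sharp threshold
\[
E_c := \sup\bigl\{E \ge 0 : S_\R(v)<\infty \text{ for every solution } v \text{ with } \|v\|_{L_t^\infty\dot H_x^{\frac32}}\le E \bigr\},
\]
which is positive (by small data scattering) and finite (by assumption). One then selects maximal-lifespan solutions $u_n:I_n\times\R^4\to\C$ with $\|u_n\|_{L_t^\infty\dot H_x^{\frac32}}\to E_c$ and $S_{I_n}(u_n)\to\infty$, and chooses times $t_n\in I_n$ that roughly bisect the scattering size in forward and backward time. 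It suffices to extract, modulo scaling and spatial translations, a strong subsequential limit in $\dot H_x^{\frac32}$ of $u_n(t_n)$; evolving this limit under \eqref{nls} will produce the desired almost periodic solution blowing up in both time directions.

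To extract the limit, I would first establish a linear profile decomposition for bounded sequences in $\dot H_x^{\frac32}(\R^4)$ relative to the 4d free Schr\"odinger evolution, in the spirit of Keraani. Applied to $\{u_n(t_n)\}$, it yields profiles $\phi^j\in\dot H_x^{\frac32}$ with asymptotically orthogonal parameters $(\lambda_n^j, x_n^j, t_n^j)$ and a linear remainder whose global Strichartz norm tends to zero. Each profile is promoted to a nonlinear profile by solving \eqref{nls} either locally in time (when $t_n^j/(\lambda_n^j)^2\to0$) or via the scattering wave operator (when $t_n^j/(\lambda_n^j)^2\to\pm\infty$). If every nonlinear profile had $\dot H_x^{\frac32}$-norm strictly below $E_c$, the definition of $E_c$ would furnish a uniform Strichartz bound on each of them; asymptotic orthogonality would then allow one to paste them together into an approximate solution of \eqref{nls} on $I_n$. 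A stability/perturbation lemma at the $\dot H_x^{\frac32}$-critical regularity would upgrade this to a genuine solution with uniformly bounded scattering size, contradicting $S_{I_n}(u_n)\to\infty$. Hence exactly one profile survives, with $\|\phi\|_{\dot H_x^{\frac32}}=E_c$, and the remainder converges strongly to zero.

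The surviving nonlinear profile $u$ then satisfies $\|u\|_{L_t^\infty\dot H_x^{\frac32}}\le E_c$, and must blow up in both time directions: otherwise the bisection of $S_{I_n}(u_n)$ combined with stability would again transfer a scattering bound back to $u_n$. Almost periodicity modulo symmetries follows by reapplying the same profile-decomposition/stability argument to the orbit $\{u(\tau):\tau\in I\}$: any sequence of times must, after rescaling and translation, have a convergent subsequence, which by the compactness characterization in the remark following the definition of almost periodicity is equivalent to the required integral smallness condition on high/low frequencies and large spatial tails.

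The chief obstacle, and the reason this reduction is more delicate than in the mass- or energy-critical cases, is that $\dot H_x^{\frac32}$ is \emph{not} a conserved quantity. The bound $\|u_n\|_{L_t^\infty\dot H_x^{\frac32}}\le E_c$ cannot be recovered from any conservation law and must be hypothesized at all times, then propagated carefully through each step of the decomposition --- one must in particular verify that the critical norm of the surviving nonlinear profile never exceeds $E_c$ throughout its maximal lifespan, not just at the initial time. A related subtlety is the construction of nonlinear profiles whose time parameters escape to $\pm\infty$: this requires wave operators at $\dot H_x^{\frac32}$-regularity together with a long-time stability result robust enough to accommodate the relatively weak Strichartz norms available in four dimensions. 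I expect these to be the technically demanding points, but none of them constitute a new conceptual difficulty beyond what was already handled in \cite{MMZ} for $2<p<4$; the reduction step should go through by a routine (if careful) adaptation of those arguments to the endpoint $p=4$.
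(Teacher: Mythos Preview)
Your proposal is correct and matches the paper's approach: the paper does not give a detailed proof of this reduction but sketches precisely the concentration-compactness/minimal-counterexample argument you describe (profile decomposition, nonlinear profiles, stability, decoupling), citing it as standard and referring to \cite{KV2010} for the details specific to the energy-supercritical setting with fractional derivatives. Your identification of the non-conserved nature of the $\dot H_x^{\frac32}$-norm as the main subtlety is apt, and your remark that the argument carries over from \cite{MMZ} with only routine adaptation to $p=4$ is exactly the paper's position.
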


As mentioned above, almost periodic solutions are constructed via a limiting argument as minimal blowup solutions. The argument, which has its origin in work of Keraani \cite{Ker0, Ker}, is now considered fairly standard in the field of dispersive equations at critical regularity \cite{KM, KM2010, KTV2009, KV2010, KV20101, KVnote, KVZ2008, Mu, Mu2, TVZ2007}. The argument relies on three main ingredients: a linear profile decomposition for $e^{it\Delta}$ \cite{BeVa, CarKer, Ker0, MerVeg, Shao}, a stability theory for the nonlinear equation (similar to the local theory), and a decoupling statement for nonlinear profiles. Roughly speaking, decoupling means that one can solve the equation (approximately) by decomposing the initial data into profiles, evolving each profile by the nonlinear equation, and then recombining the nonlinear profiles. This step relies essentially on an orthogonality property satisfied by the profiles. In the presence of a non-integer number of derivatives and/or non-algebraic nonlinearities, the decoupling step necessitates some additional technical arguments.  By now, technology exists to treat a  range of these situations \cite{HolRou, KM2010, KV2010, Mu, Mu2}. The necessary arguments for our setting may be found in \cite{KV2010}, which treats general energy-supercritical NLS. For a good introduction to concentration compactness techniques in the dispersive setting, we refer the reader to \cite{KVnote, Visbook}.

We next discuss some further properties of almost periodic solutions. First, the frequency scale obeys the following local constancy property (cf. \cite[Lemma~5.18]{KVnote}).

\begin{lemma}[Local constancy]\label{lem:lc} If $u:I\times\R^4\to\C$ is a maximal-lifespan almost periodic solution, then there exists $\delta=\delta(u)>0$ such that for $t_0\in I$ we have
$$
[t_0-\delta N(t_0)^{-2},t_0+\delta N(t_0)^{-2}]\subset I,
$$
with $N(t)\sim_u N(t_0)$ for $| t-t_0|\leq\delta N(t_0)^{-2}$. 

In particular, modifying the compactness modulus by a multiplicative factor, we may divide the lifespan $I$ into characteristic subintervals $J_k$ on which we can take $N(t)\equiv N_k$ for some $N_k$, with $| J_k|\sim_u N_k^{-2}$. 
\end{lemma}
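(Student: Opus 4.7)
The plan is to argue by the standard rescaling-plus-compactness approach (cf.\ \cite[Lemma~5.18]{KVnote}), combining the scaling symmetry, the pre-compactness of $\{u(t):t\in I\}$ modulo symmetries afforded by almost periodicity, and the local well-posedness theory for the $\dot H_x^{3/2}$-critical equation. The scaling $u(t,x)\mapsto\lambda^{1/2}u(\lambda^2 t,\lambda x)$ transforms the frequency scale as $N(t)\mapsto \lambda N(\lambda^2 t)$, so after rescaling I may assume $N(t_0)=1$ and prove the lemma with a uniform $\delta>0$.

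For the inclusion $[t_0-\delta, t_0+\delta]\subset I$: by the equivalent formulation of almost periodicity given in the Remark, after rescaling and a spatial translation $u(t_0)$ lies in a fixed compact set $K\subset\dot H_x^{3/2}(\R^4)$. Strichartz estimates show that for each $f\in K$ there exists $\delta_f>0$ with $\|e^{it\Delta} f\|_{L_{t,x}^{12}([-\delta_f,\delta_f]\times\R^4)}$ small enough to run a contraction mapping for the $\dot H_x^{3/2}$-critical small-data theory; compactness of $K$ together with continuity of $f\mapsto \|e^{it\Delta}f\|_{L_{t,x}^{12}}$ in $\dot H_x^{3/2}$ promotes this to a uniform $\delta>0$, extending $u$ to the desired interval with bounded Strichartz norms.

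For the comparability $N(t)\sim N(t_0)$ I would argue by contradiction. Suppose there exist sequences $t_n\in I$ and $\sigma_n\in\R$ with $\sigma_n N(t_n)^2\to 0$ but $N(t_n+\sigma_n)/N(t_n)\to 0$ or $\infty$. Rescale by setting $u_n(t,x):=N(t_n)^{-1/2}u\bigl(t_n+tN(t_n)^{-2},\,N(t_n)^{-1}(x-x(t_n))\bigr)$, so that each $u_n$ is almost periodic with rescaled frequency scale $\tilde N_n(0)=1$ and $u_n(0)\in K$ modulo translation, while $\tilde N_n(\sigma_n N(t_n)^2)\to 0$ or $\infty$. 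Pass to a subsequence so that $u_n(0)\to v_0$ in $\dot H_x^{3/2}$; by the inclusion step together with stability theory for \eqref{nls}, the $u_n$ converge in the Strichartz sense on $[-\delta,\delta]$ to the solution $v$ with $v(0)=v_0$, and since $\sigma_n N(t_n)^2\to 0$ we also have $u_n(\sigma_n N(t_n)^2)\to v_0$ in $\dot H_x^{3/2}$. But the frequency scale of an element of a pre-compact subset of $\dot H_x^{3/2}$ is pinned down up to a bounded multiplicative factor, so $\tilde N_n(\sigma_n N(t_n)^2)$ must stay comparable to the frequency scale of $v_0$ within $K$, contradicting the assumed divergence.

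The main subtlety I expect is in that last step---the continuity of the frequency scale under $\dot H_x^{3/2}$-convergence, which uses almost periodicity through the essentially tautological observation that any two choices of $N(\cdot)$ compatible with the same compactness modulus differ by a bounded multiplicative factor. The final assertion---the partition of $I$ into characteristic subintervals $J_k$ with $N(t)\equiv N_k$ and $|J_k|\sim_u N_k^{-2}$---then follows from the first part by a Heine--Borel covering argument, with the compactness modulus $C(\eta)$ adjusted by a bounded multiplicative factor to absorb the oscillation of $N$ on each $J_k$.
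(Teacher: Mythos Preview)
Your proposal is correct and follows exactly the approach the paper defers to: the paper does not give its own proof but simply cites \cite[Lemma~5.18]{KVnote}, and your rescaling-plus-compactness argument is precisely the one found there. There is nothing to add.
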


Lemma~\ref{lem:lc} provides information about the behavior of the frequency scale at blowup (cf. \cite[Corollary~5.19]{KVnote}):

\begin{corollary}[$N(t)$ at blowup]\label{cor:blowup} Let $u:I\times\R^4\to\C$ be a maximal-lifespan almost periodic solution to \eqref{nls}. If $T$ is a finite endpoint of $I$ then $N(t)\gtrsim_u | T-t|^{-\frac{1}{2}}.$ 
\end{corollary}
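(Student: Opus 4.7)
The plan is to deduce this directly from Lemma~\ref{lem:lc} together with the definition of a maximal-lifespan solution. I would handle the case that $T$ is the right endpoint of $I$; the argument for a finite left endpoint is completely symmetric.

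Fix any $t \in I$ with $t < T$. By the local constancy lemma, there exists $\delta=\delta(u)>0$ (independent of $t$) such that
\[
[t-\delta N(t)^{-2},\,t+\delta N(t)^{-2}]\subset I.
\]
In particular, the right endpoint $t+\delta N(t)^{-2}$ lies in $I$. Since $T=\sup I$, this forces
\[
t+\delta N(t)^{-2}\leq T,
\]
which is exactly $N(t)\geq \sqrt{\delta}\,|T-t|^{-\frac12}$, as claimed. The finite left-endpoint case is identical after reflecting in time, using again the two-sided interval produced by Lemma~\ref{lem:lc}.

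There is really no technical obstacle here: the whole statement is just a restatement of the fact that the lifespan interval produced by local constancy at time $t$ must fit inside $I$, so its length $2\delta N(t)^{-2}$ cannot exceed $2|T-t|$. The only thing that could require thought would be the borderline case in which $T\in I$, but even then the point $t+\delta N(t)^{-2}$ cannot exceed $T=\sup I$, so the same inequality holds and the conclusion is unaffected.
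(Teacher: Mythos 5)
Your argument is correct and is exactly the standard deduction from Lemma~\ref{lem:lc} that the paper itself relies on (it cites \cite[Corollary~5.19]{KVnote}, whose proof is precisely this containment argument): the interval $[t-\delta N(t)^{-2}, t+\delta N(t)^{-2}]$ must fit inside $I$, so $\delta N(t)^{-2}\leq |T-t|$. Nothing further is needed.
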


We can also relate the frequency scale to the Strichartz norms of an almost periodic solution.  
\begin{lemma}\label{L:SB} Let $u:I\times\R^4\to\C$ be a nonzero almost periodic solution to \eqref{nls}. Then 
$$
\int_I N(t)^2\,dt\lesssim \|\nsc u\|_{L_t^2 L_x^4}^2\lesssim \sum_M \|\nsc u_M\|_{L_t^2 L_x^4}^2 \lesssim_u 1+\int_I N(t)^2\,dt
$$
\end{lemma}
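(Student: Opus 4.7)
The middle inequality follows from Littlewood--Paley theory: the square function characterization $\|F\|_{L^4_x}^2\sim \|\sum_M|P_M F|^2\|_{L^2_x}$ combined with the triangle inequality in $L^2_x$ gives $\|F\|_{L^4_x}^2\lesssim \sum_M\|P_M F\|_{L^4_x}^2$; taking $F(t,\cdot)=\nsc u(t,\cdot)$ and integrating in $t$ yields the claim.

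For the left inequality, I would partition $I$ into the characteristic subintervals $J_k$ of Lemma~\ref{lem:lc}, on which $N\equiv N_k$ and $|J_k|\sim_u N_k^{-2}$, so that $\int_I N(t)^2\,dt\sim \#\{J_k\}$. It then suffices to show $\|\nsc u\|_{L^2_t L^4_x(J_k\times\R^4)}^2\gtrsim_u 1$ uniformly in $k$. Writing $u=e^{i(t-t_k)\Delta}u(t_k)+\text{Duhamel}$ on $J_k$ and applying the inhomogeneous Strichartz estimate with admissible pair $(2,4)$, the fractional product rule, H\"older with $\tfrac{1}{4/3}=\tfrac{1}{2}+\tfrac{1}{4}$, and the Sobolev embedding $\dot H^{3/2}(\R^4)\hookrightarrow L^8(\R^4)$, one bounds the Duhamel piece by $\|u\|_{L^\infty_t L^8_x}^4\|\nsc u\|_{L^2_t L^4_x(J_k)}\lesssim_u \|\nsc u\|_{L^2_t L^4_x(J_k)}$. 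A reverse triangle inequality then gives $\|\nsc u\|_{L^2_t L^4_x(J_k)}\gtrsim_u \|\nsc e^{i(t-t_k)\Delta}u(t_k)\|_{L^2_t L^4_x(J_k)}$. Using the almost-periodic form $u(t_k,x)=N_k^{1/2}f(N_k(x+x(t_k)))$ with $f$ in a compact set $K\subset \dot H^{3/2}(\R^4)$, scaling reduces this to the uniform-in-$f$ lower bound $\|e^{is\Delta}\nsc f\|_{L^2_s L^4_x([-\delta_0,\delta_0]\times\R^4)}\gtrsim_u 1$ for $f\in K\setminus\{0\}$, which follows from continuity of this norm in $f\in\dot H^{3/2}$ (via Strichartz) and its strict positivity for $f\neq 0$ (since $e^{is\Delta}\nsc f\equiv 0$ would force $\nsc f=0$), combined with compactness of $K$.

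For the right inequality, I would again work on each $J_k$ and split the sum at $M\sim N_k$. For $M\leq N_k$, Bernstein in dimension four gives $\|\nsc u_M\|_{L^4_x}\lesssim M\|\nsc u_M\|_{L^2_x}$, hence
\[
\sum_{M\leq N_k}\|\nsc u_M\|_{L^2_t L^4_x(J_k)}^2\lesssim |J_k|\sum_{M\leq N_k} M^2\|\nsc u_M\|_{L^\infty_t L^2_x}^2\lesssim_u 1.
\]
For $M>N_k$ I would apply Strichartz on $J_k$ to the equation $(i\partial_t+\Delta)P_M u=P_M(|u|^4 u)$, controlling the data piece by Plancherel ($\sum_M\|\nsc u_M(t_k)\|_{L^2_x}^2\sim \|\nsc u(t_k)\|_{L^2_x}^2\lesssim_u 1$) and handling the nonlinear piece via a frequency-localized fractional product rule plus almost periodicity—so that $P_M(|u|^4 u)$ is small for $M\gg N_k$—together with H\"older and $\|u\|_{L^\infty_t L^8_x}\lesssim_u 1$. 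Summing over $k$ and combining with the low-frequency estimate yields $1+\#\{J_k\}\sim 1+\int_I N(t)^2\,dt$.

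The main obstacle is the uniform lower bound $\|\nsc u\|_{L^2_t L^4_x(J_k)}^2\gtrsim_u 1$ in the left inequality: it requires the simultaneous use of compactness from almost periodicity, Strichartz for the linear flow on windows of size $\sim N_k^{-2}$, and a triangle-inequality absorption of the nonlinear contribution; no single ingredient suffices on its own.
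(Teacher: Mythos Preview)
Your approach is essentially the same as the paper's: partition $I$ into characteristic subintervals $J_k$, use $\int_I N(t)^2\,dt\sim_u\#\{J_k\}$, and show that the Strichartz norm is $\sim_u 1$ on each $J_k$. The paper simply refers to \cite[Lemma~5.21]{KVnote} together with the Besov--Strichartz estimate (Proposition~\ref{P:Strichartz}), so your outline is more detailed than what the paper provides. The middle inequality and the lower bound are fine.

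There is, however, a small gap in your upper bound for the high-frequency piece $M>N_k$. Applying Strichartz separately to each $P_M u$ and then summing the squares of the Duhamel terms requires
\[
\sum_{M}\bigl\|\nsc P_M(|u|^4 u)\bigr\|_{L_t^{\tilde q'}L_x^{\tilde r'}(J_k)}^2\lesssim_u 1,
\]
but for dual Strichartz exponents one has $\tilde r'\leq 2$, and the square-function inequality runs the wrong way there; ``$P_M(|u|^4 u)$ is small for $M\gg N_k$'' is not enough to recover summability without further work. There is also a circularity: your bound on the Duhamel piece uses $\|\nsc u\|_{L_t^2L_x^4(J_k)}$, the very quantity you are trying to control. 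Both issues disappear if you use the Besov--Strichartz estimate (Proposition~\ref{P:Strichartz}) directly on $J_k$, which already places the $\ell^2_M$ sum on the left and a single (unsummed) dual norm on the nonlinearity; the latter is then estimated by the fractional chain rule and the bound $S_{J_k}(u)\lesssim_u 1$ coming from the local theory underlying Lemma~\ref{lem:lc}. This is precisely the route the paper points to, and it makes your low/high frequency split unnecessary.
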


To prove Lemma~\ref{L:SB}, we may adapt the proof of \cite[Lemma~5.21]{KVnote}, making use of the Strichartz estimate below (Proposition~\ref{P:Strichartz}). Briefly, $\int_I N(t)^2\,dt$ counts the number of characteristic subintervals in $I$, while the Strichartz norm is $\sim_u 1$ on each such subinterval. 

We now refine the class of almost periodic solutions that we consider. By rescaling arguments as in \cite{KTV2009, KV20101, TVZ2007}, we can guarantee that the almost periodic solutions we consider do not escape to arbitrarily low frequencies on at least half of their maximal lifespan, say $[0,T_{max})$. Using Lemma~\ref{lem:lc} to divide $[0,T_{max})$ into characteristic subintervals $J_k$, we arrive at the following theorem. 

\begin{theorem}[Further reductions]\label{T:AP} If Theorem~\ref{T} fails, then there exists an almost periodic solution $u:[0,T_{max})\times\R^4\to\C$ to \eqref{nls} that blows up forward in time and satisfies
\begin{equation}\label{E:bdd}
u\in L_t^\infty \dot H_x^{\frac32}([0,T_{max})\times\R^4).
\end{equation}
Furthermore, we may write $[0,T_{max})=\cup_k J_k$, where
\begin{equation}\label{E:Ng1}
N(t)\equiv N_k\geq 1\quad\text{for}\quad t\in J_k,\quad\text{with}\quad |J_k|\sim_u N_k^{-2}.
\end{equation}
We classify $u$ according to the following two scenarios:  
\begin{align}\label{E:KF}
& \int_0^{T_{max}} N(t)^{-3}\,dt<\infty \quad (\text{rapid frequency-cascade solution}), \\ 
\label{E:KI}
&\int_0^{T_{max}} N(t)^{-3}\,dt=\infty \quad (\text{quasi-soliton solution}).
\end{align}
\end{theorem}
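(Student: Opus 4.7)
The plan is to start from the almost periodic solution $u$ produced by Theorem~\ref{T:AP1}, successively refine it using the scaling symmetry and Lemma~\ref{lem:lc}, and then read off the dichotomy \eqref{E:KF}--\eqref{E:KI} as a trivial consequence. After translating in time so that $0$ lies in the lifespan $I$ of $u$, split $I$ into a forward half $[0, T_{max})$ and a backward half $(T_{min}, 0]$.

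The substantive step is to arrange, after possibly reversing time and rescaling, that $\inf_{t \in [0, T_{max})} N(t) \geq 1$. Corollary~\ref{cor:blowup} already forbids $N(t) \to 0$ at a finite blowup time, so the only danger is that $\inf N = 0$ in \emph{both} time directions, along sequences necessarily escaping to $\pm\infty$. Following the rescaling arguments of \cite{KTV2009, KV20101, TVZ2007}, the plan is to produce a contradiction in this scenario. Pick a sequence $t_n \to \infty$ with $N(t_n) \to 0$, set $\lambda_n = 1/N(t_n) \to \infty$, and consider
\[
u_n(t,x) := \lambda_n^{1/2}\, u\bigl(\lambda_n^2 t + t_n,\, \lambda_n x\bigr),
\]
which has frequency scale $N_n(t) = \lambda_n N(\lambda_n^2 t + t_n)$ normalized so that $N_n(0) = 1$. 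Almost periodicity of $u$ makes $\{u_n(0)\}$ precompact in $\dot H_x^{\frac32}$ modulo spatial translations, so after passing to a subsequence we obtain a limiting datum whose nonlinear evolution, by the profile decomposition and stability theory adapted to $\dot H_x^{\frac32}$, is again an almost periodic solution. Running the analogous rescaling in the backward direction, the symmetric limit combined with $\{u_n\}$ produces another almost periodic blowup solution of strictly smaller critical invariants, contradicting the minimality of $u$. This minimality argument is the main obstacle: although standard in outline, it requires careful bookkeeping of the frequency scales and reuses the full profile decomposition / stability machinery that underlies Theorem~\ref{T:AP1}, now in a form sensitive to $N(\cdot)$. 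Once the contradiction is in hand, we rescale globally by $\lambda = 1/\inf_{[0, T_{max})} N > 0$ to secure $N(t) \geq 1$.

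With $N(t) \geq 1$ on $[0, T_{max})$, Lemma~\ref{lem:lc} yields a $\delta > 0$ such that $N$ is essentially constant on intervals of length $\delta N(t)^{-2}$; after inflating the compactness modulus by a harmless multiplicative factor, we replace $N$ by a step function $N \equiv N_k$ on characteristic subintervals $J_k$ of length $\sim_u N_k^{-2}$, producing the partition $[0, T_{max}) = \bigcup_k J_k$ together with the conditions \eqref{E:Ng1}. The uniform bound \eqref{E:bdd} is inherited directly from almost periodicity. Finally, the classification \eqref{E:KF}--\eqref{E:KI} is a vacuous dichotomy: the nonnegative quantity $\int_0^{T_{max}} N(t)^{-3}\,dt$ is either finite, placing $u$ in the rapid frequency-cascade class, or infinite, placing it in the quasi-soliton class.
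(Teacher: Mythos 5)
Your overall skeleton matches the paper's: the paper itself gives no detailed proof here, but simply invokes the rescaling arguments of \cite{KTV2009, KV20101, TVZ2007} to arrange that the solution does not escape to arbitrarily low frequencies on half its lifespan, then applies Lemma~\ref{lem:lc} to produce the subintervals $J_k$, after which \eqref{E:bdd} and the dichotomy \eqref{E:KF}--\eqref{E:KI} are immediate. Those final steps of your write-up are fine.

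The gap is in your treatment of the crucial case $\inf N=0$ on both halves of the lifespan. You claim this scenario is \emph{contradictory}, via a limit solution ``of strictly smaller critical invariants, contradicting the minimality of $u$.'' No such contradiction is available: the rescaling $u\mapsto\lambda^{1/2}u(\lambda^2t,\lambda x)$ preserves the $\dot H_x^{3/2}$-norm exactly, and the limit of $u_n(0)$ (modulo translations) has critical norm at most that of $u$, with no mechanism forcing strict decrease; the resulting evolution is simply another minimal blowup solution, not a smaller one. Indeed, the ``low-frequency escape in both time directions'' scenario is not precluded by minimality at this stage (in the mass-critical setting its analogue, the double high-to-low cascade, survives the reduction as one of the three enemies and is only eliminated later using conservation laws). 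The cited arguments handle this case by \emph{extraction}, not contradiction: one chooses $t_n$ as approximate running minima of $N$ over expanding windows, so that $N(t)\gtrsim N(t_n)$ on intervals around $t_n$ whose rescaled lengths tend to infinity on at least one side; only then does the limit solution inherit $N(t)\gtrsim 1$ on a half-line, after which one rescales and, if necessary, reverses time. Your choice of an arbitrary sequence $t_n\to\infty$ with $N(t_n)\to 0$ normalizes $N_n(0)=1$ but gives no lower bound whatsoever on $N_n(t)$ for $t\neq 0$, so even the extraction route is not set up correctly as written.
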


To complete the proof of Theorem~\ref{T}, it therefore suffices to rule out the existence of almost periodic solutions as in Theorem~\ref{T:AP}.

The quantity appearing in \eqref{E:KF} and \eqref{E:KI} is related to the interaction Morawetz inequality, an \emph{a priori} estimate for solutions to defocusing NLS introduced in \cite{ckstt}. We recall the estimate here in the four-dimensional setting: defining
$$
M(t)=\iint |u(t,y)|^2 \nabla a(x-y)\cdot\,2\Im(\nabla u(t,x)\bar{u}(t,x))\,dx\,dy,\quad a(x)=|x|,
$$ 
one can prove a lower bound for $\frac{d}{dt}M(t)$ and use the fundamental theorem of calculus to deduce the following (see \cite{RV}, for example):
\begin{equation}\label{IM4}
\int_I\iint_{\R^4\times\R^4}\frac{|u(t,x)|^2|u(t,y)|^2}{|x-y|^3}\,dx\,dy\,dt\lesssim \|u\|_{L_t^\infty L_x^2(I\times\R^4)}^3\|\nabla u\|_{L_t^\infty L_x^2(I\times\R^4)}.
\end{equation}

A scaling argument suggests that for almost periodic solutions to \eqref{nls}, one has
$$
\text{LHS}\eqref{IM4}\sim_u \int_I N(t)^{-3}\,dt,
$$
which explains the appearance of this quantity in Theorem~\ref{T:AP}. In particular, we expect the interaction Morawetz inequality to preclude the possibility of almost periodic solutions satisfying \eqref{E:KI}. We make this heuristic precise in Section~\ref{S:QS} by proving a space-localized interaction Morawetz inequality, which we then use to preclude quasi-solitons (see Proposition~\ref{P:IM}). We need space localization because \eqref{IM4} is not directly applicable in our setting; indeed, we do not control the $H_x^1$-norm of the solutions we consider. 

The proof of the space-localized Morwetz estimate represents the main difficulty of this paper compared to our previous work \cite{MMZ}. Spatial truncation in the standard Morawetz weight introduces error terms that must be controlled to arrive at a useful estimate. To achieve this, we first prove a long-time Strichartz estimate, Proposition~\ref{prop:lts}. Such estimates first appeared in the work of Dodson \cite{Dodson3}, and have since appeared in \cite{Dodson14, KV3D, MMZ, Mu, Mu3, Visan2011}. As in \cite{Dodson14, KV3D}, we use a maximal Strichartz estimate (Proposition~\ref{P:maximal}) to prove the long-time Strichartz estimate, which results in a stronger estimate than the one appearing in our previous work \cite{MMZ}. 

Another key ingredient in the proof of the Morawetz estimate is Proposition~\ref{L:mass}, which is a Strichartz-type estimate used to control the mass of solutions over balls. Proposition~\ref{L:mass} is similar to \cite[Proposition~2.7]{MMZ}, which we used in our previous work for exactly the same purpose. Similar estimates also appear in \cite{Dodson14, KV3D}. To prove Proposition~\ref{L:mass}, we use the double Duhamel argument, which has its origin in \cite{CKSTT07}. In contrast to our previous work, however, we need to put a portion of the nonlinearity in the endpoint $L_t^2 L_x^1$ (cf. the proof of \cite[(6.17)]{MMZ}). Because we are in four space dimensions, this leads to a logarithmic failure in the double Duhamel argument, as was exhibited already in \cite{Dodson14}. 

As in \cite{Dodson14}, we are able to witness the logarithmic loss by rescaling the Morawetz weight by a function of time. This leads to a new error term in the Morawetz estimate, which we handle by adapting arguments of Dodson \cite{Dodson, Dodson14}. In particular, we employ a `smoothing algorithm' to produce a suitable rescaling function, which is closely related to the frequency scale function of the solution. 

Finally, we need to overcome the logarithmic loss stemming from Proposition~\ref{L:mass}. We achieve this by using an appropriate Morawetz weight and exploiting the fact that \eqref{nls} is energy-supercritical, as we now explain. To simply the exposition, let us ignore the time-dependent rescaling (or, equivalently, consider a `true soliton' with $N(t)\equiv 1$). Inspired by \cite{KV3D}, we introduce a weight $a$ such that $a=|x|$ for $|x|\leq R$ and $a$ is constant for $|x|>Re^J$, where $R\gg 1$ and $J\sim\log R$. In the intermediate region, $a$ satisfies $|\partial_r^k a_r|\lesssim_k  J^{-1} r^{-k}$. Some error terms do not require the use of Proposition~\ref{L:mass}, in which case the factor $J^{-1}$ provides smallness. For terms requiring Proposition~\ref{L:mass}, the factor $J^{-1}$ cancels the logarithmic loss; however, we still need to exhibit smallness. For very low frequencies, we can use almost periodicity, as in \eqref{APsmall}. For the remaining higher frequency terms, we use the fact that the error terms involve at most one derivative of the solution, while the equation \eqref{nls} is energy-supercritical; in particular, we control the $\dot H_x^{\frac32}$-norm of the solution, and hence we can exhibit a gain by using Bernstein's inequality.

In our previous work \cite{MMZ}, we proved a space- and frequency-localized interaction Morawetz inequality; in particular, we proved an estimate for the high frequencies of the solution only. This introduced even more error terms, as the high frequencies alone do not solve \eqref{nls}. In this paper, we opt to work with a true solution, which reduces the total number of error terms. Of course, we still need to control the low frequencies in the error terms, but we expect the low frequencies to be relatively harmless due to \eqref{E:Ng1}. To control the low frequencies, we use the long-time Strichartz estimates along with the following proposition, which gives additional decay in the $L_x^q$-sense. 

\begin{proposition}[Additional decay, \cite{MMZ}]\label{P:decay} Suppose $u:[0,T_{max})\times\R^4\to\C$ is an almost periodic solution as in Theorem~\ref{T:AP}. Then
\begin{equation}\label{E:decay}
u\in L_t^\infty L_x^q([0,T_{max})\times\R^4)\quad\text{for all}\quad \tfrac{40}{11}<q\leq 8.
\end{equation}
\end{proposition}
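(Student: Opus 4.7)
The endpoint $q = 8$ is immediate: the Sobolev embedding $\dot H_x^{3/2}(\R^4) \hookrightarrow L_x^8(\R^4)$, combined with the hypothesis $u \in L_t^\infty \dot H_x^{3/2}$, yields $u \in L_t^\infty L_x^8$ with no further argument. The content of the proposition is therefore the strict range $\frac{40}{11} < q < 8$, in which Sobolev embedding at regularity $3/2$ no longer directly reaches $L_x^q$ from below.

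The plan is to decompose $u$ using a Littlewood--Paley partition $u = \sum_N P_N u$ and treat high and low frequencies separately. For $N \gtrsim 1$, I would use Bernstein's inequality to pass from $\dot H_x^{3/2}$ to $L_x^q$. Since $q < 8$ gives $\tfrac12 - \tfrac{4}{q} < 0$, the estimate $\|P_N u\|_{L_x^q} \lesssim N^{\frac12 - \frac{4}{q}}\||\nabla|^{\frac32} P_N u\|_{L_x^2}$ is summable over dyadic $N \gtrsim 1$, and the sum is controlled by the uniform $\dot H_x^{3/2}$ bound (combined with a square-function estimate). Crucially, Bernstein alone blows up for very low frequencies, so a different tool is needed there.

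For the low-frequency piece $P_{\leq 1} u$, I would exploit the fact that $u$ is a minimal blowup solution that blows up forward in time on $[0, T_{\max})$; the standard Kenig--Merle type reduction then gives that the forward Duhamel formula
\[
P_{\leq 1} u(t) = i \int_t^{T_{\max}} e^{i(t-s)\Delta} P_{\leq 1}(|u|^4 u)(s)\,ds
\]
holds weakly in $\dot H_x^{3/2}$, with no free-evolution term. Combining this representation with a Strichartz-type estimate (applied to a carefully chosen admissible pair), the dispersive decay available for frequency-localized propagators, H\"older's inequality to split $|u|^4 u$ (placing four factors in the Strichartz space $L_{t,x}^{12}$ and the fifth in an interpolated norm built from the $L_t^\infty L_x^8$ control of the previous step), and the smallness $\|P_{\leq c(\eta)} |\nabla|^{3/2} u\|_{L_t^\infty L_x^2} < \eta$ from \eqref{APsmall} together with $N(t) \geq 1$ from \eqref{E:Ng1}, one obtains an $L_t^\infty L_x^q$ bound for $P_{\leq 1} u$ that closes on a bootstrap.

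The main obstacle, and the origin of the threshold $q > \tfrac{40}{11}$, is the exponent bookkeeping in the Duhamel/Strichartz step: the nonlinearity $|u|^4 u$ has just enough integrability for the forward Duhamel integral to converge in the targeted dual Strichartz norm, and tracking the H\"older and Sobolev exponents through this estimate produces exactly the constraint $\tfrac{1}{q} < \tfrac{11}{40}$. Below this threshold the integral fails to converge absolutely in any admissible norm reachable from the available control, so the single-Duhamel approach breaks down. The hard part of the proof is choosing the exponents so that the smallness from almost periodicity and the decay from Bernstein combine to close the bootstrap across the entire range $\tfrac{40}{11} < q < 8$ at once, rather than at one $q$ followed by interpolation.
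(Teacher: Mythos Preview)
Your outline has the right architecture at the top level --- Sobolev for $q=8$, Bernstein to sum high frequencies, reduced Duhamel for low frequencies --- but the low-frequency step as written does not close, and your account of where the threshold $\tfrac{40}{11}$ comes from is off.

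The fatal issue is the placement of four factors of $u$ in $L_{t,x}^{12}$. The reduced Duhamel formula integrates over $[t,T_{\max})$, and by hypothesis $u$ blows up forward in time, meaning $\|u\|_{L_{t,x}^{12}([0,T_{\max})\times\R^4)}=\infty$. So any estimate that requires a global-in-time $L_{t,x}^{12}$ bound on $u$ is circular. The paper's argument avoids spacetime Strichartz norms of $u$ entirely: one works with the quantity
\[
f_q(N) := N^{\frac4q-\frac12}\|u_N\|_{L_t^\infty L_x^q}, \qquad 4<q<8,
\]
which is \emph{a priori} bounded by Bernstein and the $\dot H_x^{3/2}$ bound. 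Using the reduced Duhamel formula, the dispersive estimate (not Strichartz), and a paraproduct decomposition of $|u|^4 u$, one proves a recurrence of the schematic form $f_q(N)\lesssim \eta\sum_M c(N,M)f_q(M)$ for $N$ small, with summable kernel $c$. An acausal Gronwall inequality then upgrades boundedness of $f_q$ to decay: $\|u_N\|_{L_t^\infty L_x^q}\lesssim_u N^{(2-\frac8q)-}$ for small $N$.

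Second, the threshold $\tfrac{40}{11}$ does not arise from convergence of the Duhamel integral; it arises \emph{after} the recurrence, by interpolating the decay $\|u_N\|_{L_x^q}\lesssim N^{(2-\frac8q)-}$ (valid for $4<q<8$) against the trivial bound $\|u_N\|_{L_x^2}\lesssim N^{-3/2}$. Sending $q\to 8$ in this interpolation gives $\|u_N\|_{L_x^{40/11+}}\lesssim N^{0+}$, which is summable over small $N$. So the argument does proceed ``at one $q$ followed by interpolation,'' contrary to your final sentence; the recurrence is only run in the window $4<q<8$, and the interpolation with $L_x^2$ is what pushes the range down to $\tfrac{40}{11}$.
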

To prove Proposition~\ref{P:decay}, one can argue exactly as in \cite[Proposition~3.1]{MMZ} (in fact, the algebraic nonlinearity $|u|^4 u$ allows for some simplifications). We briefly sketch the ideas here. Defining
$$
f_q(N):=N^{\frac4q-\frac12}\|u_N\|_{L_t^\infty L_x^{q}([0,T_{max})\times\R^4)}\quad (4<q<8),
$$
one uses the reduced Duhamel formula, Strichartz, the dispersive estimate, and a suitable decomposition of the nonlinearity to prove a recurrence relation for $f_q(N)$. Here $N$ is chosen small enough to guarantee that the $L_t^\infty \dot{H}_x^{s_c}$-norm of $u_{\leq N}$ is small; this is possible because of \eqref{E:Ng1}.  As $f_q$ is bounded (by Bernstein and \eqref{E:bdd}), one can combine the recurrence relation with an `acausal Gronwall inequality' to deduce the bound 
$$
\|u_N\|_{L_t^\infty L_x^q}\lesssim_u N^{(2-\frac8q)-}
$$ 
for $4<q<8$ and $N$ small. Interpolation with $\|u_N\|_{L_t^\infty L_x^2}\lesssim_u N^{-\frac32}$ yields
$$
\|u_N\|_{L_t^\infty L_x^{\frac{40}{11}+}}\lesssim N^{0+}$$ 
for $N$ small, which implies the result. See \cite[Section~3]{MMZ} for more details. This type of argument appears originally in \cite{KV20101}, where it was combined with the double Duhamel argument (in dimensions $d\geq 5$) to prove negative regularity.

The remaining scenario in Theorem~\ref{T:AP}, namely, that of rapid frequency-cascades, is comparatively simple. In particular, we use the long-time Strichartz estimate (Proposition~\ref{prop:lts}) together with the following reduced Duhamel formula to show that such solutions are inconsistent with the conservation of mass.
\begin{proposition}[Reduced Duhamel formula]\label{P:RD} Let $u:[0,T_{max})\times\R^4\to\C$ be an almost periodic solution to \eqref{nls} as in  Theorem~\ref{T:AP}. Then for any $t\in [0,T_{max})$,  
\begin{align*}
u(t)=\lim_{T\nearrow T_{max}}i\int_t^T e^{i(t-s)\Delta}\bigl(|u|^4 u\bigr)(s)\,ds
\end{align*} 
as a weak limit in $\dot H_x^{\frac32}(\R^4)$. 
\end{proposition}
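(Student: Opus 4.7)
The plan is to work from the Duhamel identity
\begin{equation*}
u(t) = e^{i(t-T)\Delta}u(T) + i\int_t^T e^{i(t-s)\Delta}\bigl(|u|^4 u\bigr)(s)\,ds,\qquad t\leq T<T_{max},
\end{equation*}
which follows directly from the Duhamel formula in the definition of solution. Rearranging, the proposition reduces to showing
\begin{equation*}
e^{i(t-T)\Delta}u(T)\;\rightharpoonup\; 0\quad\text{weakly in }\dot H_x^{3/2}(\R^4)\text{ as }T\nearrow T_{max}.
\end{equation*}
The family $\{e^{i(t-T)\Delta}u(T)\}_T$ is bounded in $\dot H_x^{3/2}$ by \eqref{E:bdd} together with the $\dot H_x^{3/2}$-unitarity of the free Schr\"odinger group, so by density it suffices to test weak convergence against Schwartz $\phi$. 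Setting $\psi:=\nsc\phi\in L_x^1\cap L_x^2$ and moving the propagator onto the test function, the task becomes showing
\begin{equation*}
\int \nsc u(T,x)\,\overline{e^{-i(t-T)\Delta}\psi(x)}\,dx\longrightarrow 0.
\end{equation*}

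Given $\eta>0$, the spatial part of almost periodicity gives $\bigl\|\nsc u(T)\,\mathbf{1}_{\{|x-x(T)|>C(\eta)/N(T)\}}\bigr\|_{L_x^2}<\eta^{1/2}$. Split $\nsc u(T)=G_{\text{near}}+G_{\text{far}}$ accordingly. The $G_{\text{far}}$ contribution to the pairing is $O(\eta^{1/2}\|\psi\|_{L_x^2})$ by Cauchy-Schwarz, which is acceptable. For $G_{\text{near}}$, Cauchy-Schwarz on the ball of radius $C(\eta)/N(T)$ combined with \eqref{E:bdd} yields $\|G_{\text{near}}\|_{L_x^1}\lesssim_u C(\eta)^2/N(T)^2$, and the $L_x^1\to L_x^\infty$ dispersive estimate $\|e^{-i(t-T)\Delta}\psi\|_{L_x^\infty}\lesssim|T-t|^{-2}\|\psi\|_{L_x^1}$ then gives
\begin{equation*}
\Bigl|\int G_{\text{near}}\,\overline{e^{-i(t-T)\Delta}\psi}\,dx\Bigr|\lesssim_u C(\eta)^2\,\|\psi\|_{L_x^1}\,\bigl(N(T)\,|T-t|\bigr)^{-2}.
\end{equation*}

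The key observation is that $N(T)|T-t|\to\infty$ as $T\nearrow T_{max}$ in both possible scenarios: if $T_{max}<\infty$, Corollary~\ref{cor:blowup} forces $N(T)\to\infty$ while $|T-t|\to T_{max}-t>0$; if $T_{max}=\infty$, \eqref{E:Ng1} gives $N(T)\geq 1$ while $|T-t|\to\infty$. Hence the $G_{\text{near}}$ contribution vanishes in the limit, so $\limsup_{T\nearrow T_{max}}$ of the full pairing is $\lesssim\eta^{1/2}$, and sending $\eta\to 0$ closes the argument. The main point requiring care is the unified treatment of the finite and infinite blowup scenarios through the single factor $(N(T)|T-t|)^{-2}$; otherwise the argument is a direct combination of the spatial compactness afforded by almost periodicity with the $L_x^1\to L_x^\infty$ dispersive estimate for the free Schr\"odinger group.
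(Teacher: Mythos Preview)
Your argument is correct. The paper does not actually supply a proof of this proposition; it only remarks that the result is a robust consequence of almost periodicity and refers the reader to \cite[Proposition~5.23]{KVnote}. Your approach---reducing to the weak convergence $e^{i(t-T)\Delta}u(T)\rightharpoonup 0$ in $\dot H_x^{3/2}$, then combining the spatial concentration from almost periodicity with the $L_x^1\to L_x^\infty$ dispersive estimate, and finally using $N(T)|T-t|\to\infty$ (via Corollary~\ref{cor:blowup} when $T_{max}<\infty$ and via \eqref{E:Ng1} when $T_{max}=\infty$)---is precisely the standard argument one finds in that reference.

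One small point worth making explicit: to guarantee $\psi=|\nabla|^{3/2}\phi\in L_x^1(\R^4)$ without a separate computation, it is cleanest to restrict the test functions to $\phi$ with $\widehat\phi\in C_c^\infty(\R^4\setminus\{0\})$, which are still dense in $\dot H_x^{3/2}(\R^4)$ and for which $\psi$ is manifestly Schwartz. Your claim is in fact true for general Schwartz $\phi$ (the $|\xi|^{3/2}$ singularity of $\widehat\psi$ at the origin produces decay $|x|^{-11/2}$, which is integrable in $\R^4$), but the restricted class avoids having to justify this.
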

The reduced Duhamel formula is a robust consequence of almost periodicity; to prove it, one can adapt the proof of \cite[Proposition~5.23]{KVnote}.  

The rest of this paper is organized as follows. In Section~\ref{S:notation}, we set up notation and collect some useful lemmas, including the Strichartz estimates mentioned above. In Section~\ref{S:lts}, we prove the long-time Strichartz estimate. In Section~\ref{S:rfc}, we preclude the possibility of rapid frequency-cascades. In Section~\ref{S:QS}, we prove the interaction Morawetz inequality and use it to preclude the possibility of quasi-solitons.


\subsection*{Acknowledgements} B.~D. was supported by NSF grant DMS-1500424. C.~M. was supported by the NSFC under grant No. 11171033 and 11231006. J.~M. was supported by the NSF Postdoctoral Fellowship DMS-1400706. J.~Z. was partly supported by the European Research Council, ERC-2012-ADG, project number 320845: Semi-Classical Analysis of Partial Differential Equations. 


\section{Notation and useful lemmas}\label{S:notation}
For nonnegative $X$, $Y$, we write $X\lesssim Y$ to denote $X\leq CY$ for some $C>0$. If $X\lesssim Y\lesssim X$, we write $X\sim Y$. Dependence on certain parameters will be indicated by subscripts; for example, $X\lesssim_u Y$ means $X\leq CY$ for some $C=C(u)$. Dependence of the estimates on the ambient dimension will not be explicitly indicated. We write $\text{\O}(X)$ to denote a finite linear combination of terms that resemble $X$ up to Littlewood--Paley projections, complex conjugation, and/or maximal functions. 

We write $L_t^q L_x^r(I\times\R^4)$ for the Banach space of functions $u:I\times\R^4\to\C$ equipped with the norm
$$
\|u\|_{L_t^q L_x^r(I\times\R^4)}:=\biggl(\int_I \|u(t)\|_{L_x^r(\R^4)}^q\,dt\biggr)^{\frac1q},
$$
with the usual adjustments if $q$ or $r$ is infinite. When $q=r$, we write $L_t^q L_x^q=L_{t,x}^q$. We write $\|f\|_{L_x^r}$ to denote $\|f\|_{L_x^r(\R^4)}$. We write $r'$ to denote the dual exponent to $r$, i.e. the solution to $\frac1r+\frac1{r'}=1.$ 

We define the Fourier transform on $\R^4$ by
$$
\wh{f}(\xi)=\tfrac{1}{(2\pi)^2}\int_{\R^4} e^{-ix\cdot\xi}f(x)\,dx.
$$
For $s\in\R$, we define $|\nabla|^s$ to be the Fourier multiplier operator with symbol $|\xi|^s$, and we define the homogeneous Sobolev norm $\dot H_x^s$ via $ \|f\|_{\dot H_x^s}=\||\nabla|^s f\|_{L_x^2}.$ 

\subsection{Basic harmonic analysis}
Let $\varphi$ be a radial bump function supported on the ball $\{|\xi|\leq \frac{11}{10}\}$ and equal to $1$ on the ball $\{|\xi|\leq 1\}$. For $N\in 2^{\mathbb{Z}}$, we define the Littlewood--Paley projection operators by
\begin{align*}
&\widehat{P_{\leq N}f}(\xi) := \wh{f_{\leq N}}(\xi):=\varphi(\tfrac{\xi}{N})\widehat{f}(\xi), \\ 
&\widehat{P_{> N}f}(\xi) := \wh{f_{>N}}(\xi) := \bigl(1-\varphi(\tfrac{\xi}{N})\bigr)\widehat{f}(\xi), \\ 
&\widehat{P_{N}f}(\xi) := \wh{f_N}(\xi):= \bigl(\varphi(\tfrac{\xi}{N})-\varphi(\tfrac{2\xi}{N})\bigr)\widehat{f}(\xi).
\end{align*}
We also define 
$$
P_{N_1<\cdot\leq N_2}=\sum_{N_1<N\leq N_2} P_N,
$$
where here and throughout such sums are taken over $N\in 2^{\mathbb{Z}}$. 

The Littlewood--Paley operators commute with all other Fourier multiplier operators (such as derivatives and the free propagator), as well as the conjugation operation. These operators are self-adjoint and bounded on every $L_x^r$ and $\dot H_x^s$ space for $1\leq r\leq\infty$ and $s\geq 0$. They also obey the following standard Bernstein estimates.
\begin{lemma}[Bernstein estimates] For $1\leq r\leq q\leq\infty$ and $s\geq 0$,
\begin{align*}
\big\||\nabla|^s P_{\leq N} f \big\|_{L_x^r(\R^4)} & \lesssim N^{s} \big\|P_{\leq N} f \big\|_{L_x^r(\R^4)},  \\
\big\| P_{> N} f \big\|_{L_x^r(\R^4)} & \lesssim  N^{-s} \big\| |\nabla|^{s}P_{> N} f \big\|_{L_x^r(\R^4)}, \\
\big\| P_{\leq N} f \big\|_{L^q(\R^4)} & \lesssim N^{\frac{4}{r}-\frac{4}{q}} \big\| P_{\leq N} f \big\|_{L_x^r(\R^4)}.
\end{align*}
\end{lemma}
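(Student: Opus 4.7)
The strategy is to realize each Littlewood--Paley operator (possibly composed with a power of $|\nabla|$) as a convolution operator with an explicitly scaled kernel, then invoke Young's convolution inequality. The key observation is that if $\phi$ is a Schwartz function on $\R^4$, then its rescaling $\phi_N(x) := N^{4} \phi(Nx)$ satisfies $\widehat{\phi_N}(\xi) = \widehat{\phi}(\xi/N)$ and $\|\phi_N\|_{L^p} = N^{4(1-1/p)} \|\phi\|_{L^p}$ for every $1\le p\le\infty$. A useful auxiliary device is a \emph{fattened projector}: choose $\psi\in C_c^\infty$ equal to $1$ on $\{|\xi|\le 11/10\}$ and supported in $\{|\xi|\le 2\}$, and define $\widetilde P_{\le N}$ to have symbol $\psi(\xi/N)$. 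Then $P_{\le N} = \widetilde P_{\le N} P_{\le N}$, which lets us insert a smooth localizer whenever we need to cap the frequency support.

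For the first estimate, write $|\nabla|^s P_{\le N} f = m_N * P_{\le N} f$, where $m_N$ is the Fourier multiplier with symbol $|\xi|^s \psi(\xi/N)$. Rescaling $\xi = N\eta$ shows that $m_N(x) = N^{s}\, N^{4} m_1(Nx)$, so $\|m_N\|_{L^1} \lesssim N^s \|m_1\|_{L^1}$, and $m_1\in L^1$ because its symbol $|\eta|^s \psi(\eta)$ is smooth and compactly supported. Young's inequality with $p=1$ then yields the claimed bound. The second estimate is symmetric: since $|\nabla|^s$ commutes with $P_{>N}$, factor $P_{>N} = (P_{>N} |\nabla|^{-s}) |\nabla|^s P_{>N}$; the symbol $(1-\varphi(\xi/N))|\xi|^{-s}$ rescales to $N^{-s}(1-\varphi(\eta))|\eta|^{-s}$, whose inverse Fourier transform decays rapidly at infinity (standard stationary-phase/integration-by-parts argument on the singularity at $\eta=0$ combined with the cutoff) and so lies in $L^1$. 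Hence the convolution kernel has $L^1$ norm $\lesssim N^{-s}$, and Young's inequality again closes the estimate.

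For the third inequality, exploit $P_{\le N} = \widetilde P_{\le N} P_{\le N}$ to write $P_{\le N} f = K_N * P_{\le N} f$, where $K_N$ is the kernel of $\widetilde P_{\le N}$. Since $K_N = \psi^\vee_N$ is the $N$-rescaling of a fixed Schwartz function, we have $\|K_N\|_{L^p} \sim N^{4(1-1/p)}$ for every $1\le p\le\infty$. Applying Young's inequality with $\tfrac{1}{q}+1 = \tfrac{1}{p}+\tfrac{1}{r}$, i.e.\ $1-\tfrac{1}{p} = \tfrac{1}{r}-\tfrac{1}{q}$, produces the factor $N^{4(1/r-1/q)}$, which is exactly $N^{4/r-4/q}$.

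I don't expect any genuine obstacle here: the argument is standard and robust. The only points that warrant care are (i) verifying that $|\xi|^s\psi(\xi/N)$ and $(1-\varphi(\xi/N))|\xi|^{-s}$ are Fourier transforms of integrable functions (in the second case one needs the cutoff away from the origin combined with Schwartz decay at infinity), and (ii) the endpoint cases $r=1$ or $q=\infty$, both of which are handled uniformly by Young's inequality. The four-dimensional nature enters only through the factor of $N^4$ in the kernel rescaling, which produces the exponent $4/r-4/q$.
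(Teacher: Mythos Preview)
The paper does not prove this lemma; it is stated without proof as a standard result. Your approach via Young's inequality and explicit kernel scaling is the standard one and is correct.

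One small point deserves care in the second inequality. You claim that the inverse Fourier transform of $(1-\varphi(\eta))|\eta|^{-s}$ ``decays rapidly at infinity'' and hence lies in $L^1$. Rapid decay at infinity is true (the symbol is $C^\infty$, so repeated integration by parts gives arbitrary polynomial decay in $x$), but this alone does not give $L^1$; you also need local integrability near $x=0$. There is no singularity of the symbol at $\eta=0$ (the factor $1-\varphi$ kills it), so the phrase ``singularity at $\eta=0$'' is misplaced; the genuine issue is the slow decay $|\eta|^{-s}$ as $|\eta|\to\infty$, which for $0<s<4$ makes the kernel blow up like $|x|^{s-4}$ near the origin. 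This is locally integrable precisely because $s>0$. A clean way to package both the local and global behavior is to decompose dyadically: write $(1-\varphi(\eta))|\eta|^{-s}=\sum_{k\geq 0}\chi(2^{-k}\eta)|\eta|^{-s}$ with $\chi$ supported in an annulus; each summand has inverse Fourier transform with $L^1$ norm $\sim 2^{-ks}$, and these sum for $s>0$. With this adjustment your argument is complete.
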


We will also need the following fractional chain and product rules from \cite{CW}. 

\begin{lemma}[Fractional calculus, \cite{CW}]\text{ }
\begin{itemize}
\item[(i)] Let $s\geq 0$ and $1<r,r_j,q_j<\infty$ satisfy
$\frac1r=\frac1{r_j}+\frac1{q_j}$ for $j=1,2$. Then
\begin{align*}
\big\||\nabla|^s(fg)\big\|_{L_x^r}&\lesssim\|f\|_{L_x^{r_1}}\big\||\nabla|^s g
	\big\|_{L_x^{q_1}}+\big\||\nabla|^sf\big\|_{L_x^{r_2}}\|g\|_{L_x^{q_2}}.
\end{align*}
\item[(ii)] Let $G\in C^1(\mathbb{C})$ and $s\in(0,1],$ and let $1<r_1\leq \infty$ and $1<r,r_2<\infty$
satisfy $\frac1r=\frac1{r_1}+\frac1{r_2}.$ Then
\begin{equation}\nonumber
\big\||\nabla|^s G(u)\big\|_{L_x^r}\lesssim\|G'(u)\|_{L_x^{r_1}}\big\||\nabla|^su\big\|_{L_x^{r_2}}.
\end{equation}
\end{itemize}
\end{lemma}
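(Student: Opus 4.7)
The plan is to treat (i) and (ii) separately, in each case via a Littlewood--Paley/paraproduct decomposition together with a kernel representation of the fractional derivative; this is essentially the Coifman--Meyer/Christ--Weinstein strategy.

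For part (i), I would write
$$fg=\sum_{N_1\ll N_2} f_{N_1} g_{N_2}+\sum_{N_1\gg N_2} f_{N_1} g_{N_2}+\sum_{N_1\sim N_2} f_{N_1}g_{N_2},$$
so that $|\nabla|^s(fg)$ splits into three analogous pieces. In the low--high case $N_1\ll N_2$, the product has frequency $\sim N_2$, so $|\nabla|^s$ acts roughly like $N_2^s$ and, after the $N_2$-summation is handled by a square function / Littlewood--Paley characterization of $L^{q_1}$, we recover $\|f\|_{L^{r_1}}\||\nabla|^s g\|_{L^{q_1}}$ via H\"older. The high--low case is symmetric and yields the second term on the right-hand side. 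The high--high (diagonal) case is treated similarly, with the extra freedom of placing $|\nabla|^s$ on either factor. Boundedness of the Hardy--Littlewood maximal function (to sum $\sum_{N_1\ll N_2}f_{N_1}$ against $g_{N_2}$) and vector-valued Khintchine/Littlewood--Paley inequalities close the estimate.

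For part (ii), I would use the Riesz-type kernel representation, valid for $0<s<1$,
$$|\nabla|^s F(x)\;\sim\;\int_{\R^4}\frac{F(x)-F(y)}{|x-y|^{4+s}}\,dy,$$
together with the fundamental theorem of calculus
$$G(u(x))-G(u(y))=\Bigl(\int_0^1 G'\bigl(u(y)+\theta(u(x)-u(y))\bigr)\,d\theta\Bigr)\bigl(u(x)-u(y)\bigr).$$
Plugging this in, pulling the absolute value inside, and using $|G'(u(y)+\theta(u(x)-u(y)))|\lesssim M(G'(u))(x)+M(G'(u))(y)$ (where $M$ is the Hardy--Littlewood maximal function, after a dyadic decomposition in $|x-y|$ and the almost-everywhere Lebesgue differentiation argument), we reduce to controlling $M(G'(u))$ times a pointwise analog of $|\nabla|^s u$. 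H\"older in $r=(\frac{1}{r_1}+\frac{1}{r_2})^{-1}$ together with boundedness of $M$ on $L^{r_1}$ (for $1<r_1\le\infty$, where the endpoint is the trivial $L^\infty$ bound) then yields the claim. The endpoint $s=1$ is handled directly by the classical chain rule.

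The main obstacle is the off-diagonal paraproduct piece in (i) when one is forced to use the endpoint exponent $r_1=\infty$, where the maximal-function sum $\sum_{N_1\ll N_2}f_{N_1}$ cannot be squared; this is handled by a direct $L^\infty$ bound on $\sum_{N_1\le N_2}f_{N_1}$ via $Mf$ rather than a square function. For part (ii) the delicate point is exactly the restriction $s\le 1$: once $s>1$, the linearization of $G$ produces higher-order terms involving $G''$ and the simple maximal-function domination above is no longer sufficient, which is why the statement is sharp in this range.
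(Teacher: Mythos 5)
The paper offers no proof of this lemma; it is quoted directly from Christ--Weinstein \cite{CW}, so your sketch has to be measured against the standard arguments in the literature. Your treatment of part (i) is essentially that standard argument: the low--high/high--low/diagonal paraproduct splitting, with the maximal function absorbing the low-frequency sum and a square function carrying the derivative, is exactly how the fractional Leibniz rule is proved (your worry about $r_1=\infty$ in (i) is moot, since the lemma restricts all exponents in (i) to $(1,\infty)$, but it does no harm).

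Part (ii) as written has a genuine gap at the step ``plugging this in, pulling the absolute value inside.'' Once you replace $\int\frac{G(u(x))-G(u(y))}{|x-y|^{4+s}}\,dy$ by $\int\frac{|G'(\cdots)|\,|u(x)-u(y)|}{|x-y|^{4+s}}\,dy$ and factor out the maximal function, the quantity left over is $D_su(x):=\int\frac{|u(x)-u(y)|}{|x-y|^{4+s}}\,dy$, and $\|D_su\|_{L^{r_2}}$ is \emph{not} controlled by $\||\nabla|^su\|_{L^{r_2}}$: decomposing dyadically in $|x-y|$ shows $\|D_su\|_{L^{r_2}}\lesssim\sum_N N^s\|u_N\|_{L^{r_2}}=\|u\|_{\dot B^s_{r_2,1}}$, and the Besov space $\dot B^s_{r_2,1}$ is strictly smaller than $\dot W^{s,r_2}$ (one only has $\dot W^{s,p}\hookrightarrow \dot B^s_{p,\max(p,2)}$, not into $\dot B^s_{p,1}$). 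So your argument proves the chain rule with a strictly stronger norm on the right-hand side. The actual proof must preserve an $\ell^2$ structure in the frequency parameter: one estimates $P_NG(u)$ via the kernel of $P_N$ and the difference identity, dominates the result by $M(G'(u))(x)$ times maximal averages of the pieces $u_{N'}$ with off-diagonal decay in $N/N'$, and then sums by Schur's test inside a square function, rather than summing absolutely. Two smaller points: the bound $|G'(u(y)+\theta(u(x)-u(y)))|\lesssim M(G'(u))(x)+M(G'(u))(y)$ is not available for arbitrary $G\in C^1$ --- it requires a structural hypothesis such as $|G'(a+b)|\lesssim|G'(a)|+|G'(b)|$ (satisfied by $G(u)=|u|^4u$, and implicitly assumed in \cite{CW}); and the endpoint $s=1$ is not the classical chain rule, since $|\nabla|^1\neq\nabla$, so it too needs the Littlewood--Paley argument rather than a ``direct'' treatment.
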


\subsection{Strichartz estimates}

The free Schr\"odinger propagator $e^{it\Delta}=\F^{-1}e^{-it|\xi|^2}\F$ is given in physical space by
$$
[e^{it\Delta}f](x)=\tfrac{-1}{16\pi^2 t^2}\int_{\R^4} e^{\frac{i|x-y|^2}{4t}}f(y)\,dy.
$$
It follows that $\|e^{it\Delta}f\|_{L_x^2}\equiv \|f\|_{L_x^2}$ and the following \emph{dispersive estimate} holds:
$$
\|e^{it\Delta}f\|_{L_x^\infty(\R^4)}\lesssim |t|^{-2}\|f\|_{L_x^1(\R^4)}\quad\text{for}\quad t\neq 0.
$$
Interpolation yields
$$
\|e^{it\Delta}f\|_{L_x^q(\R^4)}\lesssim |t|^{-2+\frac{4}{q}}\|f\|_{L_x^{q'}(\R^4)}\quad\text{for}\quad 2\leq q\leq\infty\quad\text{and}\quad t\neq 0.
$$

These bounds imply the standard Strichartz estimates for $e^{it\Delta}$ \cite{GV, KeT98, St}. Arguing as in \cite{CKSTT07}, one can also deduce a `Besov' version. In particular, we have the following estimates.

\begin{proposition}[Strichartz, \cite{CKSTT07, GV, KeT98, St}]\label{P:Strichartz} Let $u:I\times\R^4\to\C$ be a solution to $(i\partial_t+\Delta)u=F$. Then for any $t_0\in I$ and any $2\leq q,\tilde{q},r,\tilde{r}\leq\infty$ satisfying $\frac2q+\frac4r=\frac{2}{\tilde{q}}+\frac{4}{\tilde{r}}=2$, we have
$$
\|u\|_{L_t^q L_x^r(I\times\R^4)}\lesssim \biggl(\sum_N \|u_N\|_{L_t^q L_x^r(I\times\R^4)}^2\biggr)^{\frac12}\lesssim \|u(t_0)\|_{L_x^2(\R^4)} +\|F\|_{L_t^{\tilde{q}'}L_x^{\tilde{r}'}(I\times\R^4)}.
$$
\end{proposition}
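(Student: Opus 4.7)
The plan is to first establish the standard (non-Besov) Strichartz inequality
\[
\|u\|_{L_t^q L_x^r(I\times\R^4)} \lesssim \|u(t_0)\|_{L_x^2(\R^4)} + \|F\|_{L_t^{\tilde q'}L_x^{\tilde r'}(I\times\R^4)}
\]
for admissible pairs $(q,r)$, $(\tilde q,\tilde r)$ satisfying $\frac{2}{q}+\frac{4}{r}=\frac{2}{\tilde q}+\frac{4}{\tilde r}=2$ with $2\le q,\tilde q,r,\tilde r\le\infty$, and then upgrade it to the stated square-function form via a dyadic Littlewood--Paley decomposition. Duhamel's formula splits $u$ into a free-evolution piece $e^{i(t-t_0)\Delta}u(t_0)$ and a retarded integral against $F$, so it suffices to establish the corresponding homogeneous and inhomogeneous estimates separately.

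For the standard estimate, I would interpolate the dispersive bound $\|e^{it\Delta}f\|_{L_x^\infty}\lesssim |t|^{-2}\|f\|_{L_x^1}$ with mass conservation to obtain $\|e^{it\Delta}f\|_{L_x^r}\lesssim |t|^{-2+4/r}\|f\|_{L_x^{r'}}$ for $2\le r\le\infty$, then invoke the $TT^*$ argument to reduce the homogeneous Strichartz estimate to a weighted convolution inequality in time, which in turn follows from Hardy--Littlewood--Sobolev at all non-endpoint admissible pairs. The Christ--Kiselev lemma then converts the untruncated inhomogeneous estimate into the retarded one with distinct exponents. In four dimensions the only genuine endpoint that appears here is $(q,r)=(2,4)$ (for which $\tfrac{2}{2}+\tfrac{4}{4}=2$), at which HLS fails; this is the main technical obstacle, and I would invoke the bilinear interpolation argument of Keel--Tao \cite{KeT98} to handle it.

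For the Besov upgrade, observe that the admissibility condition combined with $q\ge 2$ forces $r\le 4$, so $1<r<\infty$ and the Littlewood--Paley square function characterization of $L_x^r$ applies. Combined with Minkowski's inequality (which allows one to move $\ell^2_N$ inside $L_t^q L_x^r$ at no cost when $q,r\ge 2$), this yields
\[
\|u\|_{L_t^q L_x^r} \sim \Bigl\|\Bigl(\sum_N |u_N|^2\Bigr)^{\tfrac12}\Bigr\|_{L_t^q L_x^r} \le \Bigl(\sum_N \|u_N\|_{L_t^q L_x^r}^2\Bigr)^{\tfrac12}.
\]
Since Littlewood--Paley projections commute with $e^{i(t-s)\Delta}$, each $u_N$ satisfies $(i\partial_t+\Delta)u_N=P_N F$ with initial data $P_N u(t_0)$, and the already-established standard Strichartz estimate gives the dyadic bound $\|u_N\|_{L_t^q L_x^r}\lesssim \|P_N u(t_0)\|_{L_x^2}+\|P_N F\|_{L_t^{\tilde q'}L_x^{\tilde r'}}$. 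Squaring and summing in $N$ and using Plancherel orthogonality for the data produces $\|u(t_0)\|_{L_x^2}^2$; for the nonlinear term, the reverse Minkowski inequality (valid because $\tilde q',\tilde r'\le 2$) followed by the Littlewood--Paley square function estimate gives
\[
\Bigl(\sum_N \|P_N F\|_{L_t^{\tilde q'}L_x^{\tilde r'}}^2\Bigr)^{\tfrac12} \le \Bigl\|\Bigl(\sum_N |P_N F|^2\Bigr)^{\tfrac12}\Bigr\|_{L_t^{\tilde q'}L_x^{\tilde r'}} \lesssim \|F\|_{L_t^{\tilde q'}L_x^{\tilde r'}}.
\]
Combining these bounds completes the proof. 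Apart from the Keel--Tao endpoint, every step is soft harmonic analysis.
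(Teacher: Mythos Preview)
The paper does not supply its own proof of this proposition; it is stated as a known result with citations to \cite{CKSTT07, GV, KeT98, St}, prefaced only by the remark that the dispersive bounds ``imply the standard Strichartz estimates'' and that ``arguing as in \cite{CKSTT07}, one can also deduce a `Besov' version.'' Your outline is correct and is exactly the argument implicit in those references: the standard Strichartz estimate via $TT^*$/HLS with the Keel--Tao endpoint $(q,r)=(2,4)$, and the Besov refinement via Littlewood--Paley plus Minkowski (noting that admissibility forces $2\le r\le 4$ and $\tilde q',\tilde r'\le 2$, so the square-function characterization and the relevant Minkowski directions are all available). There is nothing to compare here---your proposal simply fills in what the paper takes for granted.
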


As in \cite{CKSTT07, KV3D}, we use this Besov-type Strichartz estimate in order to access the $L_x^\infty$ endpoint. 

\begin{lemma}[Endpoint estimate]\label{lem:endpt}
For $u:I\times\R^4\to\C$,
$$
\|u\|_{L_t^4 L_x^\infty(I\times\R^4)}\lesssim\|\nsc u\|_{L_t^\infty L_x^2(I\times\R^4)}^{\frac{1}{2}}\bigg(\sum_{N}\|\nsc u_N\|_{L_t^2L_x^4(I\times\R^4)}^2\bigg)^{\frac{1}{4}}.
$$  
\end{lemma}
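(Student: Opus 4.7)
The plan is to deduce the endpoint estimate by combining two Bernstein bounds for each Littlewood--Paley piece with an optimal dyadic splitting, then integrating in time.

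First I would bound $\|u_N(t)\|_{L_x^\infty}$ in two complementary ways. Bernstein gives both
$$\|u_N(t)\|_{L_x^\infty}\lesssim N^{2}\|u_N(t)\|_{L_x^2}=N^{1/2}\|\nsc u_N(t)\|_{L_x^2}$$
(useful for large $N$) and
$$\|u_N(t)\|_{L_x^\infty}\lesssim N\|u_N(t)\|_{L_x^4}=N^{-1/2}\|\nsc u_N(t)\|_{L_x^4}$$
(useful for small $N$), where in each case I used that $\nsc$ behaves like multiplication by $N^{3/2}$ on functions with Fourier support in $\{|\xi|\sim N\}$.

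Next, for a parameter $M=M(t)>0$ to be chosen, I would split the frequency sum and apply Cauchy--Schwarz dyadically, writing $A_N=\|\nsc u_N(t)\|_{L_x^2}$ and $B_N=\|\nsc u_N(t)\|_{L_x^4}$:
$$\|u(t)\|_{L_x^\infty}\leq\sum_{N\leq M}N^{1/2}A_N+\sum_{N>M}N^{-1/2}B_N\lesssim M^{1/2}\Bigl(\sum_N A_N^2\Bigr)^{1/2}+M^{-1/2}\Bigl(\sum_N B_N^2\Bigr)^{1/2},$$
using $\sum_{N\leq M}N\sim M$ and $\sum_{N>M}N^{-1}\sim M^{-1}$ over dyadic $N$. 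By Plancherel and Littlewood--Paley, $\sum_N A_N^2\sim \|\nsc u(t)\|_{L_x^2}^2$. Optimizing in $M$ (balancing the two terms) yields the pointwise bound
$$\|u(t)\|_{L_x^\infty}\lesssim \|\nsc u(t)\|_{L_x^2}^{1/2}\Bigl(\sum_N\|\nsc u_N(t)\|_{L_x^4}^2\Bigr)^{1/4}.$$

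Finally, I would raise this to the fourth power and integrate in $t$: pulling the first factor out in $L_t^\infty$ leaves
$$\|u\|_{L_t^4L_x^\infty}^4\lesssim \|\nsc u\|_{L_t^\infty L_x^2}^{2}\int_I \sum_N\|\nsc u_N(t)\|_{L_x^4}^2\,dt=\|\nsc u\|_{L_t^\infty L_x^2}^{2}\sum_N\|\nsc u_N\|_{L_t^2L_x^4}^2,$$
which gives the claim upon taking fourth roots. The only mildly subtle point is balancing the two Bernstein bounds via the dyadic split; once the right cutoff $M$ is identified, the rest is a straightforward application of Cauchy--Schwarz and Littlewood--Paley orthogonality, so I do not anticipate any essential obstacle.
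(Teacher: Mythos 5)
Your argument is correct, and it reaches the estimate by a genuinely different route from the paper. The paper expands $\|u\|_{L_t^4L_x^\infty}^4$ directly as a quadrilinear sum over ordered frequencies $N_1\leq\cdots\leq N_4$, applies Bernstein to place the two lowest-frequency factors in $L_{t,x}^\infty$ via $\nsc u_{N_i}\in L_t^\infty L_x^2$ and the two highest in $L_t^2L_x^\infty$ via $\nsc u_{N_i}\in L_t^2L_x^4$, and then sums using the off-diagonal gain $(\tfrac{N_1N_2}{N_3N_4})^{1/2}$ (a Schur-test/Cauchy--Schwarz step). You instead prove the stronger \emph{fixed-time} inequality
$$\|f\|_{L_x^\infty}\lesssim \|\nsc f\|_{L_x^2}^{1/2}\Bigl(\textstyle\sum_N\|\nsc f_N\|_{L_x^4}^2\Bigr)^{1/4}$$
by splitting the dyadic sum at an optimized, $t$-dependent cutoff $M(t)$, and only then raise to the fourth power and integrate, pulling one factor out in $L_t^\infty$. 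The two proofs rest on exactly the same pair of Bernstein bounds ($L^2\to L^\infty$ at low frequency, $L^4\to L^\infty$ at high frequency), so the mechanism is identical; your version is more modular in that it isolates a pointwise-in-time interpolation inequality of Gagliardo--Nirenberg/Besov type, at the cost of the (harmless) bookkeeping of choosing $M(t)$ for each $t$, while the paper's version folds the time integration into the multilinear sum. Either way the constants are uniform and the degenerate cases (one of the two square-sums vanishing or diverging) are trivial, so there is no gap.
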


\begin{proof} Using Bernstein and Cauchy--Schwarz, we estimate
\begin{align*}
\|u\|_{L_t^4 L_x^\infty}^4&\lesssim \sum\limits_{N_1\leq \cdots\leq  N_4} \|u_{N_1}\|_{L_{t,x}^\infty} \|u_{N_2}\|_{L_{t,x}^\infty}
	\|u_{N_3}\|_{L_t^2 L_x^\infty} \|u_{N_4}\|_{L_t^2 L_x^\infty} \\
&\lesssim \|\nsc u\|_{L_t^\infty L_x^2}^2 \sum\limits_{N_1\leq\cdots\leq N_4} \bigl(\tfrac{N_1 N_2}{N_3 N_4}\bigr)^{\frac12}
	\|\nsc u_{N_3}\|_{L_t^2 L_x^4} \|\nsc u_{N_4}\|_{L_t^2 L_x^4} \\
&\lesssim \|\nsc u\|_{L_t^\infty L_x^2}^2 \sum_N \|\nsc u_{N}\|_{L_t^2 L_x^4}^2,
\end{align*}
where all space-time norms are over $I\times\R^4$. The result follows.
\end{proof}

We next record a `maximal' Strichartz estimate as in \cite{Dodson14, KV3D}. 

\begin{proposition}[Maximal Strichartz estimate]\label{P:maximal} Let $u:I\times\R^4\to\C$ be a solution to $(i\partial_t+\Delta)u=F+G$. Then for any $t_0\in I$ and $4<q\leq\infty$, we have 
$$
\|\sup_{N} N^{\frac{4}{q}-2}\|P_N u(t)\|_{L_x^q}\|_{L_t^2}\lesssim \||\nabla|^{-1}u(t_0)\|_{L_x^2}+\||\nabla|^{-1}F\|_{L_t^2L_x^{\frac{4}{3}}}+\| G\|_{L_t^2 L_x^1},
$$ 
where all space-time norms are over $I\times\R^4$. 
\end{proposition}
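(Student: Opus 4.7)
My approach is to split via Duhamel as $u = u_{\text{lin}} + u_F + u_G$, where $u_{\text{lin}}(t) = e^{i(t-t_0)\Delta} u(t_0)$ and $u_F$, $u_G$ are the usual retarded Duhamel integrals of $F$ and $G$. The first two pieces will be handled together by reducing to standard Strichartz at the endpoint admissible pair $(2,4)$; the third piece requires a separate direct kernel argument because the endpoint Sobolev embedding $|\nabla|^{-1}\colon L^1_x \to L^{4/3}_x$ fails in four dimensions and so blocks the Strichartz route.

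For $w := u_{\text{lin}} + u_F$, since $q \geq 4$, Bernstein in $x$ yields
$$
N^{\frac{4}{q}-2}\|P_N w(t)\|_{L^q_x} \lesssim N^{-1}\|P_N w(t)\|_{L^4_x}.
$$
Rewriting $N^{-1} P_N = T_N(D) \circ P_N|\nabla|^{-1}$ with $T_N$ a Fourier multiplier of uniformly bounded symbol at frequencies $\sim N$, the standard Littlewood--Paley maximal inequality gives the pointwise bound $\sup_N N^{-1}|P_N w(t,x)| \lesssim M(|\nabla|^{-1} w(t))(x)$, where $M$ is the Hardy--Littlewood maximal function in $x$. Pushing $\sup_N$ inside $L^4_x$ via $\sup_N \|f_N\|_{L^4_x} \leq \|\sup_N |f_N|\|_{L^4_x}$, then invoking boundedness of $M$ on $L^4_x$ and taking $L^2_t$, gives
$$
\bigl\|\sup_N N^{\frac{4}{q}-2}\|P_N w(t)\|_{L^q_x}\bigr\|_{L^2_t} \lesssim \||\nabla|^{-1} w\|_{L^2_t L^4_x}.
$$
Since $|\nabla|^{-1} w$ solves $(i\partial_t+\Delta)(|\nabla|^{-1}w) = |\nabla|^{-1}F$ with data $|\nabla|^{-1}u(t_0)$, Proposition~\ref{P:Strichartz} applied at the endpoint pair $(2,4)$ and its dual $(2,4/3)$ bounds the right-hand side by $\||\nabla|^{-1} u(t_0)\|_{L^2_x} + \||\nabla|^{-1} F\|_{L^2_t L^{4/3}_x}$.

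For $u_G$, I combine two pointwise kernel estimates. Bernstein $L^1_x \to L^q_x$ gives
$$
N^{\frac{4}{q}-2}\|P_N e^{i(t-s)\Delta} G(s)\|_{L^q_x} \lesssim N^{2}\|G(s)\|_{L^1_x},
$$
good for $|t-s|$ small, while the dispersive estimate composed with Bernstein $L^1_x \to L^{q'}_x$ gives
$$
N^{\frac{4}{q}-2}\|P_N e^{i(t-s)\Delta} G(s)\|_{L^q_x} \lesssim N^{\frac{8}{q}-2} |t-s|^{-2+\frac{4}{q}} \|G(s)\|_{L^1_x},
$$
good for $|t-s|$ large; the two bounds balance at $|t-s| \sim N^{-2}$. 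I split the Duhamel integral at this scale and dyadically decompose the outer region $|t-s| \in [2^k N^{-2}, 2^{k+1} N^{-2}]$, controlling each annulus by the one-dimensional Hardy--Littlewood maximal function $M_t$ in time. The resulting geometric series has ratio $2^{-1+4/q}$, which sums exactly because $q>4$, producing the $N$-uniform pointwise bound $N^{\frac{4}{q}-2}\|P_N u_G(t)\|_{L^q_x} \lesssim M_t(\|G(\cdot)\|_{L^1_x})(t)$. Taking $\sup_N$ and then $L^2_t$ closes the $u_G$ estimate via the $L^2_t$-boundedness of $M_t$.

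The main obstacle is the $G$ contribution, since the endpoint failure $|\nabla|^{-1}\colon L^1_x \not\hookrightarrow L^{4/3}_x$ rules out the clean Strichartz-via-$|\nabla|^{-1}$ approach. The restriction $q>4$ is precisely what makes the dyadic tail summable in the direct kernel argument, which is why the proposition cannot include $q=4$.
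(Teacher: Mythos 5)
Your proposal is correct and follows essentially the same route as the paper: the $G$-term is handled identically (splitting the Duhamel integral at $|t-s|\sim N^{-2}$, Bernstein for the short times, dispersive estimate plus a dyadic sum convergent precisely for $q>4$ for the long times, then the $L_t^2$-boundedness of the Hardy--Littlewood maximal function in time). Your treatment of the linear and $F$ pieces via the pointwise Littlewood--Paley maximal bound $\sup_N N^{-1}|P_N w|\lesssim \M(|\nabla|^{-1}w)$ is just a valid implementation of the paper's one-line ``Bernstein followed by the $L_t^2L_x^4$ Strichartz estimate,'' so there is nothing substantive to distinguish.
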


\begin{proof} Beginning with the Duhamel formula
$$
u(t)=e^{i(t-t_0)\Delta}u(t_0)-i\int_{t_0}^t e^{i(t-s)\Delta}F(s)\,ds-i\int_{t_0}^t e^{i(t-s)\Delta}G(s)\,ds,
$$
we estimate the first two terms via Bernstein followed by the $L_t^2 L_x^4$ Strichartz estimate. For the last term, we argue as in \cite{Dodson14}. For completeness, we include the details here. 

We estimate the short-time piece via Bernstein and Strichartz. Letting $\M$ denote the Hardy--Littlewood maximal function, 
\begin{align*}
N^{\frac4q-2}\int_{|t-s|\leq N^{-2}}\|P_N e^{i(t-s)\Delta}G(s)\|_{L_x^q}\,ds &\lesssim N^{2}\int_{|t-s|\leq N^{-2}}
	\|G(s)\|_{L_x^1}\,ds \\
&\lesssim \M(\|G(\cdot)\|_{L_x^1})(t),
\end{align*}
uniformly in $N$. For the long-time piece, we use the dispersive estimate, Bernstein, and the fact that $q>4$ to estimate
\begin{align*}
N^{\frac4q-2}&\int_{|t-s|>N^{-2}}\|P_N  e^{i(t-s)\Delta}G(s)\|_{L_x^{q}}\,ds \\
&\lesssim N^{\frac8q-2}\int_{|t-s|>N^{-2}}|t-s|^{-2+\frac4q}\|G(s)\|_{L_x^1}\,ds \\
&\lesssim \sum_{M>N^{-2}} N^{\frac8q-2} M^{-2+\frac4q}\int_{|t-s|\sim M}\|G(s)\|_{L_x^1}\,ds \\
&\lesssim \sum_{M>N^{-2}} N^{\frac8q-2} M^{-1+\frac4q}\M(\|G(\cdot)\|_{L_x^1})(t) 
\lesssim \M(\|G(\cdot)\|_{L_x^1})(t),
\end{align*}
uniformly in $N$. The result now follows from the maximal function estimate.\end{proof}

Finally, we record the following Strichartz-type estimate, which will play an important role in Section~\ref{S:QS}. This estimate is similar to \cite[Proposition~2.7]{MMZ}. In order to access the $L_t^2 L_x^1$ endpoint, however, we must accept a logarithmic loss. 

\begin{proposition}[Strichartz-type estimate]\label{L:mass} Let $u:I\times\R^4\to\C$ be a solution to $(i\partial_t+\Delta)u=F+G$. Let $n:I\to\R^+$ and $\lambda:I\to\R^+$ satisfy $\lambda(t)>n(t)^{-1}$. Then
\begin{align*}
\int_I \sup_{x\in\R^4} \int_{|x-y|\leq\lambda(t)} |u(t,y)|^2\,dy\,dt & \lesssim \|\lambda\|_{L_t^\infty}^2\bigl[ \|u\|_{L_t^\infty L_x^2}^2 +
	\|F\|_{L_t^2 L_x^{\frac43}}^2\bigr] \\
&\quad +\bigl[1+\|\log(\lambda n)\|_{L_t^\infty}\bigr] \|G\|_{L_t^2 L_x^1}^2 \\
&\quad + \|G\|_{L_t^\infty L_x^{\frac85}}^2\int_I n(t)^{-3}\,dt,
\end{align*}
where all space-time norms are over $I\times\R^4$. 
\end{proposition}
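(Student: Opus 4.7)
Fix any $t_0 \in I$ and split $u = v + w$, where
$$v(t) := e^{i(t-t_0)\Delta}u(t_0) - i\int_{t_0}^t e^{i(t-s)\Delta}F(s)\,ds, \qquad w(t) := -i\int_{t_0}^t e^{i(t-s)\Delta}G(s)\,ds,$$
so that $v$ absorbs the initial data and the Strichartz-friendly forcing $F$, while $w$ carries the rougher forcing $G$. Since $|u|^2 \le 2|v|^2 + 2|w|^2$, the two contributions can be estimated separately.

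For the $v$-contribution, H\"older on the ball gives $\int_{|x-y|\le \lambda(t)} |v(t,y)|^2\,dy \le \lambda(t)^2 \|v(t)\|_{L_x^4}^2$. Time-integrating and invoking the $(q,r) = (2,4)$ Strichartz estimate (Proposition~\ref{P:Strichartz}) produces
$$\int_I \sup_{x \in \R^4} \int_{|x-y|\le \lambda(t)} |v|^2\,dy\,dt \le \|\lambda\|_{L_t^\infty}^2 \|v\|_{L_t^2 L_x^4}^2 \lesssim \|\lambda\|_{L_t^\infty}^2\bigl[\|u\|_{L_t^\infty L_x^2}^2 + \|F\|_{L_t^2 L_x^{4/3}}^2\bigr],$$
which accounts for the first term on the right-hand side.

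For the $w$-contribution, I would apply the double Duhamel trick of \cite{CKSTT07}. Writing $\int_{|x-y|\le\lambda(t)} |w|^2\,dy = \langle \chi_{\{|x-\cdot|\le \lambda(t)\}} w(t), w(t)\rangle_{L_x^2}$ and substituting the Duhamel representation for each factor of $w$, I obtain a double time integral over $(s, s') \in [t_0, t]^2$ of a pairing involving the ball indicator, $e^{i(t-s)\Delta}G(s)$ and $e^{i(t-s')\Delta}G(s')$. I split at the scale $|s - s'| \sim n(t)^{-2}$. On the short-time region $|s - s'| \lesssim n^{-2}$, the pairing is bounded via Cauchy--Schwarz and $L^2$-unitarity of $e^{i\tau\Delta}$, after which dyadic summation in $|s-s'|$ runs from the smallest relevant scale $\sim n^{-2}$ up to the ball scale $\sim \lambda^2$ (beyond which the ball localization is ineffective); there are $O(\log(\lambda n))$ such dyadic pieces, each controlled by $\|G\|_{L_t^2 L_x^1}^2$, which produces the $[1+\|\log(\lambda n)\|_{L_t^\infty}]\|G\|_{L_t^2 L_x^1}^2$ term. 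On the long-time region $|s - s'| \gtrsim n^{-2}$, I invoke the dispersive estimate $\|e^{i\tau\Delta}\|_{L_x^{8/5} \to L_x^{8/3}} \lesssim |\tau|^{-1/2}$ together with H\"older on the ball (which contributes $|B_{\lambda}|^{1/4} \sim \lambda$), pull out $\|G\|_{L_t^\infty L_x^{8/5}}$ at the two times $s, s'$, and integrate the resulting kernel against $ds\,ds'$ with the lower cutoff $|s-s'| \gtrsim n^{-2}$; collecting the factors of $\lambda$ and $n$ yields the net contribution $\|G\|_{L_t^\infty L_x^{8/5}}^2 \int_I n(t)^{-3}\,dt$.

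The main obstacle is the logarithmic loss in the short-time double Duhamel pairing. It reflects the fact that the Strichartz exponent $(q,r) = (2,1)$ is strictly inadmissible in four space dimensions (the admissible endpoint being $(2,4/3)$), so no clean Strichartz argument is available once we are forced to accept an $L_t^2 L_x^1$ component of the forcing. The best one can do is confine the failure to a single factor of $\log(\lambda n)$, at the cost of introducing the auxiliary parameter $n(t)$; as outlined in the introduction, this loss will later be absorbed by the $J^{-1}$ smallness built into the space-localized Morawetz weight when Proposition~\ref{L:mass} is applied in Section~\ref{S:QS}.
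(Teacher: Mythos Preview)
Your handling of the $v$-contribution is fine and matches the paper. The gap is in the $w$-contribution: what you describe is \emph{not} the double Duhamel trick. You expand $\|w(t)\|^2$ using the \emph{same} forward Duhamel from $t_0$ for both factors, obtaining a double integral over $(s,s')\in[t_0,t]^2$. The genuine double Duhamel writes $u(t)$ once via the forward Duhamel from $t_0$ and once via the backward Duhamel from $t_1=\sup I$, and pairs the two representations in the weighted inner product. After stripping off short-time pieces ($|t-s|<n(t)^{-2}$, controlled by the dual Strichartz pair $L_t^{4/3}L_x^{8/5}$ and H\"older in $t$---this is where the $n^{-3}$ term originates) and long-time pieces ($|t-s|>\lambda(t)^2$, controlled by the $L^1\to L^\infty$ dispersive estimate, contributing $\|G\|_{L_t^2L_x^1}^2$), the residual cross term is $\langle b(t),d(t)\rangle$ with $b$ integrating over $s\in[t-\lambda^2,t-n^{-2}]$ and $d$ over $\tau\in[t+n^{-2},t+\lambda^2]$. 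The key estimate, coming from explicit Gaussian integrals,
\[
\sup_{x}\bigl\|e^{i\sigma\Delta}e^{-|x-\cdot|^2/\lambda^2}e^{i\rho\Delta}\bigr\|_{L_y^1\to L_y^\infty}\lesssim(\sigma+\rho)^{-2}
\]
requires $\sigma=\tau-t$ and $\rho=t-s$ to have the \emph{same sign}; it then gives $(\tau-s)^{-2}$ decay, and dyadic summation over the two intermediate ranges produces exactly one factor of $\log(\lambda n)$.

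In your setup both integration variables lie in $[t_0,t]$, so $\sigma$ and $\rho$ have opposite signs and the Gaussian estimate yields $|s-s'|^{-2}$, which blows up on the diagonal. Your proposed remedy for the near-diagonal region---``Cauchy--Schwarz and $L^2$-unitarity''---would require $G\in L_x^2$, which is not assumed. Your description also reverses the roles: you place the $L_x^{8/5}$ norm on the long-time region and the $L_x^1$ norm with logarithmic loss on the short-time region, whereas in the correct argument the $L_x^{8/5}$ bound handles the short-time piece (length $\sim n^{-2}$) and the logarithm comes from the intermediate-time cross term.
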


\begin{proof} Defining the weight $\omega=\omega(t,x,y)=e^{-\frac{| x-y|^2}{\lambda(t)^2}}$, it suffices to estimate
$$
\int_I \sup_{x\in\R^4} \|u(t)\|_{L^2(\omega\,dy)}^2\,dt.
$$ 
For this, we use the double Duhamel trick. That is, we write $u$ in the form
$$
u(t)=\sum_{j=1}^3 a_j(t)+b(t)=\sum_{j=1}^3 c_j(t)+d(t)
$$
and use the following inequality, which is a consequence of Cauchy--Schwarz:
$$
\| u(t)\|_{L^2(\omega\,dy)}^2\lesssim \sum_{j=1}^3 \|a_j(t)\|_{L^2(\omega\,dy)}^2+\sum_{j=1}^3 \|  c_j(t)\|_{L^2(\omega\,dy)}^2+|\langle   b(t),  d(t)\rangle_{L^2(\omega\,dy)}|.
$$

With $I=(t_0,t_1)$, we choose our decomposition of $u$ as follows: first,
\begin{align*}
&a_1(t)=e^{i(t-t_0)\Delta}u(t_0)-i\int_{t_0}^t e^{i(t-s)\Delta}F(s)\,ds,\\
&a_2(t)=-i\int_{t_0}^{t-\lambda(t)^{2}} e^{i(t-s)\Delta} G(s)\,ds, \quad a_3(t)=-i\int_{t-n(t)^{-2}}^t e^{i(t-s)\Delta} G(s)\,ds, \\
&b(t)=-i\int_{t-\lambda(t)^{2}}^{t-n(t)^{-2}} e^{i(t-s)\Delta} G(s)\,ds, \\
\end{align*}
and similarly
\begin{align*}
&c_1(t)=e^{-i(t_1-t)\Delta}u(t_1)+i\int_t^{t_1} e^{-i(\tau-t)\Delta}F(\tau)\,d\tau, \\
&c_2(t)=i\int_{t+\lambda(t)^{2}}^{t_1} e^{-i(\tau-t)\Delta}G(\tau)\,d\tau, \quad c_3(t)=i\int_t^{t+n(t)^{-2}} e^{-i(\tau-t)\Delta}G(\tau)\,d\tau, \\
&d(t)=i\int_{t+n(t)^{-2}}^{t+\lambda(t)^{2}}e^{-i(\tau-t)\Delta}G(\tau)\,d\tau.
\end{align*}

We first estimate
\begin{align*}
\|  a_1(t)\|_{L^2(\omega\,dy)}^2& \lesssim \|e^{\frac{-|\cdot|^2}{\lambda(t)^2}}\|_{L_y^2}\|  a_1(t)\|_{L_y^4}^2\lesssim\lambda(t)^{2}
	\|a_1(t)\|_{L_y^4}^2
\end{align*}
uniformly in $x$, so that by Strichartz we have
$$
\int_I\sup_{x\in\R^4} \|  a_1(t)\|_{L^2(\omega\,dy)}^2\,dt\lesssim\|\lambda\|_{L_t^\infty}^2\|a_1\|_{L_t^2 L_x^4}^2
	\lesssim\|\lambda\|_{L_t^\infty}^2\bigl[\|u\|_{L_t^\infty L_x^2}^2+\|F\|_{L_t^2 L_x^{\frac{4}{3}}}^2\bigr].
$$
We can estimate the contribution of $c_1$ in the same way.

Next, we estimate the long-time piece $a_2$. We have
$$
\|  a_2(t)\|_{L^2(\omega\,dy)}^2\lesssim\|e^{\frac{-| \cdot|^2}{\lambda(t)^2}}\|_{L_y^1}\|  a_2(t)\|_{L_y^\infty}^2
	\lesssim \lambda(t)^4\|a_2(t)\|_{L_y^\infty}^2
$$
uniformly in $x$. We now use the dispersive estimate to estimate
\begin{align*}
\|a_2(t)\|_{L_y^\infty}&\lesssim\sum_{j=0}^{\log [(t-t_0)\lambda(t)^{-2}]}\int_{t-s\sim 2^j \lambda(t)^{2}}| t-s|^{-2}\|G(s)\|_{L_y^1}\,ds\\
 &\lesssim \lambda(t)^{-2}\sum_{j=0}^\infty 2^{-j}\M(\|G(\cdot)\|_{L_y^1})(t)\lesssim \lambda(t)^{-2}\M(\|G(\cdot)\|_{L_y^1})(t).
\end{align*}
Thus 
	$$\int_I \sup_{x\in\R^4} \|  a_2(t)\|_{L^2(\omega\,dy)}^2\,dy\lesssim\|G\|_{L_t^2 L_x^1}^2.$$
We can estimate $c_2$ similarly. 

For the short-time piece $a_3$, we use Strichartz and H\"older's inequality to estimate
\begin{align*}
\|  a_3(t)\|_{L^2(\omega\,dy)}^2\lesssim \|e^{\frac{-|\cdot|^2}{\lambda(t)^2}}\|_{L_y^\infty}\|a_3(t)\|_{L_y^2}^2
 &\lesssim \|G\|_{L_s^{\frac{4}{3}} L_x^{\frac{8}{5}}((t-n(t)^{-2},t)\times\R^4)}^2
\\ & \lesssim n(t)^{-3}\|G\|_{L_t^\infty L_x^{\frac{8}{5}}}^2
\end{align*}
uniformly in $x$, so that
$$
\int_I \sup_{x\in\R^4} \| a_3(t)\|_{L^2(\omega\,dy)}^2\,dy\lesssim_u \|G\|_{L_t^\infty L_x^{\frac{8}{5}}}^2\int_I n(t)^{-3}\,dt.
$$
We can estimate $c_3$ similarly.

We now turn to the inner product term. We recall the following estimate from \cite[Proposition~2.7]{MMZ},  which follows from the evaluation of some Gaussian integrals:
$$
\sup_{x\in\R^4}\|e^{it\Delta}e^{-\frac{|x-y|^2}{\lambda^2}}e^{is\Delta}\|_{L_y^1\to L_y^\infty}\lesssim (s+t)^{-2}.
$$
Thus we can estimate
\begin{align*}
|\langle & b(t),d(t)\rangle_{L^2(\omega\,dy)}| \\
&=\bigg|\int_{t-\lambda(t)^2}^{t-n(t)^{-2}}\int_{t+n(t)^{-2}}^{t+\lambda(t)^2}
	\int  \bar{G}(\tau,y)e^{i(\tau-t)\Delta}e^{-\frac{|x-y|^2}{\lambda^2}}e^{i(t-s)\Delta}  G(s,y)\,dy\,d\tau\,ds\bigg| \\ 
&\lesssim \int_{t-\lambda(t)^2}^{t-n(t)^{-2}}\int_{t+n(t)^{-2}}^{t+\lambda(t)^2}(\tau-s)^{-2}\|  G(s)\|_{L_y^1}\|  G(\tau)\|_{L_y^1}\,d\tau\,ds \\ 
&\lesssim\sum_{\log[n(t)^{-2}]\leq j\leq k\leq\log[\lambda(t)^2]}2^{-2k}\int_{\tau-t\sim 2^k}\|G(\tau)\|_{L_y^1}\,d\tau
	\int_{t-s\sim 2^j}\|G(s)\|_{L_y^1}\,ds \\
&\lesssim \log[n(t)\lambda(t)]\,|\M(\|G(\cdot)\|_{L_y^1})(t)|^2,
	\end{align*}
uniformly in $x$. Thus
$$
\int_I \sup_{x\in\R^4} |\langle   b(t),  d(t)\rangle_{L^2(\omega\,dy)}|\,dt\lesssim\|\log(n\lambda)\|_{L_t^\infty} \|G\|_{L_t^2 L_x^1}^2.
$$
Collecting the estimates above, we complete the proof. \end{proof} 

\section{Long-time Strichartz estimates}\label{S:lts}

In this section we prove a long-time Strichartz estimate for almost periodic solutions to \eqref{nls}. Such estimates first appeared in the work of Dodson \cite{Dodson3}, and have since appeared in \cite{Dodson14, KV3D, MMZ, Mu, Mu3, Visan2011}. As in \cite{Dodson14, KV3D}, the long-time Strichartz estimate we prove relies on the maximal Strichartz estimate (Proposition~\ref{P:maximal}). We use these estimates in Section~\ref{S:rfc}, in which we rule out rapid frequency-cascades, as well as in Section~\ref{S:QS}, in which we prove an interaction Morawetz estimate to rule out quasi-solitons. 

For $u:I\times\R^4\to\C$ an almost periodic solution to \eqref{nls} as in Theorem~\ref{T:AP} and $4<q\leq\infty$, let
\begin{align}
&A(N):=\big(\sum_{M\leq N}\|\nsc u_M\|_{L_t^2 L_x^4(I\times\R^4)}^2\big)^{\frac{1}{2}},	\label{def:A}
\\
&B_q(N):=N^{\frac{5}{2}}\|\sup_{M>N}M^{\frac4q-2}\|u_M(t)\|_{L_x^q(\R^4)}\|_{L_t^2(I)},	\label{def:B}
\\
&K:=\int_I N(t)^{-3}\,dt\sim_u \sum_{J_k\subset I} N_k^{-5}.	\label{def:K}
\end{align}

The main result of this section is the following proposition. 

\begin{proposition}[Long-time Strichartz estimate]\label{prop:lts}
Let $u$ be an almost periodic solution as in Theorem~\ref{T:AP}. Let $I\subset[0,T_{max})$ be a compact time interval, which is a contiguous union of characteristic subintervals $J_k$. For any $N>0$ and $4<q\leq\infty$, 
\begin{equation}\label{eq:lts}
A(N)+B_q(N)\lesssim_u 1+N^{\frac{5}{2}}K^{\frac{1}{2}}.
\end{equation}
Furthermore, the implicit constant does not depend on $I$. 
\end{proposition}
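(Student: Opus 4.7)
The plan is to control $A(N)$ and $B_q(N)$ simultaneously by combining the Besov-valued Strichartz estimate (Proposition~\ref{P:Strichartz}) with the maximal Strichartz estimate (Proposition~\ref{P:maximal}), applied to Littlewood--Paley truncations of $u$. Taking the initial time $t_0$ in the first characteristic subinterval of $I$ and invoking \eqref{E:bdd}, the Besov estimate applied to $\nsc P_{\leq N}u$ reduces $A(N)$ to bounding a nonlinearity term,
\[
A(N)\lesssim_u 1+\bigl\|\nsc P_{\leq N}(|u|^4 u)\bigr\|_{L_t^2 L_x^{4/3}(I\times\R^4)},
\]
while Proposition~\ref{P:maximal} applied to $P_{>N}u$, using Bernstein at the initial data to get $\||\nabla|^{-1}P_{>N}u(t_0)\|_{L_x^2}\lesssim N^{-5/2}$, yields
\[
B_q(N)\lesssim_u 1+N^{\frac{5}{2}}\bigl[\||\nabla|^{-1}F\|_{L_t^2 L_x^{4/3}}+\|G\|_{L_t^2 L_x^1}\bigr]
\]
for any splitting $P_{>N}(|u|^4 u)=F+G$ I may choose.

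The main work is estimating these forcing terms. On each characteristic subinterval $J_k\subset I$, where $N(t)\equiv N_k$, I would split $u=u_{\leq cN_k}+u_{>cN_k}$ with $c=c(\eta)$ chosen small enough that \eqref{APsmall} gives $\|\nsc u_{\leq cN_k}\|_{L_t^\infty L_x^2(J_k\times\R^4)}<\eta$. Expanding $|u|^4 u$ then produces terms in which each of the five factors is either low- or high-frequency. The all-low term is controlled by Sobolev embedding $\dot H_x^{3/2}\hookrightarrow L_x^8$ together with the $\eta$-smallness, yielding a prefactor small enough to be absorbed back into the LHS. For each term with at least one high-frequency factor, the fractional product rule allows me to place the $\nsc$ (or $|\nabla|^{-1}$) on a high-frequency factor; the resulting $L_t^2 L_x^4$-type norms of low- and mid-frequency pieces are then controlled by $A(N)$, the $L_t^2 L_x^q$-type norms of high-frequency pieces by $B_q(N)$, and any remaining $L_t^\infty L_x^r$ norms of very-low-frequency pieces by Proposition~\ref{P:decay} combined with Bernstein.

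The piece $G$ placed in $L_t^2 L_x^1$ is chosen as the part of $P_{>N}(|u|^4 u)$ that admits no smoothing by $|\nabla|^{-1}$, namely the component in which all five factors share a common high-frequency scale $M$. Bernstein and \eqref{E:bdd} give $\|u_{\sim M}\|_{L_x^5}\lesssim_u M^{-3/10}$, hence $\|u_{\sim M}^5\|_{L_x^1}\lesssim_u M^{-3/2}$. On a characteristic subinterval $J_k$ with $N_k\gtrsim N$, the dominant scale is $M\sim N_k$, giving $\|G\|_{L_t^2(J_k)L_x^1}^2\lesssim_u |J_k|\cdot N_k^{-3}\sim N_k^{-5}$; contributions from $J_k$ with $N_k\ll N$ are negligible by almost periodicity, since the relevant high-frequency factors are forced to be tiny there. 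Summing over $J_k\subset I$ and using $K\sim_u\sum_k N_k^{-5}$ produces $\|G\|_{L_t^2 L_x^1}\lesssim_u K^{1/2}$, which multiplied by $N^{5/2}$ gives exactly the $N^{5/2}K^{1/2}$ term of \eqref{eq:lts}.

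After assembling everything, the estimate takes the schematic form $A(N)+B_q(N)\leq C_u(1+N^{\frac{5}{2}}K^{\frac{1}{2}})+C_u\eta[A(N)+B_q(N)]$; choosing $\eta$ so small that $C_u\eta<\tfrac12$ absorbs the final term and yields \eqref{eq:lts}. The $I$-independence of the constant follows automatically, as every constant in the argument depends on $u$ only through $\|u\|_{L_t^\infty\dot H_x^{3/2}}$ and the compactness moduli. I expect the main difficulty to be ensuring that no cross-term involving $A$ or $B_q$ at a strictly higher dyadic scale $N'>N$ survives in the right-hand side with an uncontrollable coefficient; avoiding such cross-terms requires careful combinatorics in distributing the derivatives and Littlewood--Paley projectors among the five nonlinearity factors, made feasible in four space dimensions by the energy-supercritical scaling of $\dot H_x^{3/2}$ together with the additional $L_x^q$-decay afforded by Proposition~\ref{P:decay}.
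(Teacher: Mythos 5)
Your overall architecture (Besov Strichartz for $A$, maximal Strichartz for $B_q$, almost-periodicity smallness at frequencies below $cN(t)$, and an $L_t^2L_x^1$ piece summed over characteristic subintervals to produce $K^{1/2}$) matches the paper's. The gap is in how you close the estimate. You end with the schematic inequality $A(N)+B_q(N)\leq C_u(1+N^{5/2}K^{1/2})+C_u\eta[A(N)+B_q(N)]$ and absorb, while conceding that cross-terms at strictly higher dyadic scales might survive and hoping that ``careful combinatorics'' avoids them. They cannot be avoided: $P_{\leq N}(|u|^4u)$ and $P_{>N}(|u|^4u)$ necessarily contain products with factors living above frequency $N$, and the natural estimates (fractional product rule for the $L_x^{4/3}$ piece, Bernstein and Cauchy--Schwarz for the $L_x^1$ piece) produce exactly $A(2N)$ and $B_q(2N)$ on the right-hand side --- this is the content of the paper's Lemma~\ref{lemma:lts}, whose recurrence reads $A(N)+B_q(N)\lesssim_u 1+C(\eta)N^{5/2}K^{1/2}+\eta[A(2N)+B_q(2N)]$. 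The paper closes this not by absorption at a fixed scale but by downward induction on the dyadic frequency, with base case $N\geq\sup_{t\in I}N(t)$, where $A(N)$ and $B_q(N)$ are estimated directly via Lemma~\ref{L:SB} (giving $[A(N)]^2\lesssim_u 1+\int_I N(t)^2\,dt\lesssim_u 1+N^5K$) and the maximal Strichartz estimate. This induction, together with the direct bound at the base case, is the essential device your proposal is missing; without it the absorption step is not justified.

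A second, related issue is your choice of $G$. You place in $L_t^2L_x^1$ only the component ``in which all five factors share a common high-frequency scale $M$,'' and your bound via $\|u_{\sim M}\|_{L_x^5}\lesssim M^{-3/10}$ only covers this diagonal. The terms that genuinely resist the $L_x^{4/3}$ treatment are all those with at least two high-frequency factors: the paper's $G$ consists of $\text{\O}(u_{>cN(t)}^2u^3)$ (two factors above the characteristic frequency, three arbitrary), which yields $C(\eta)K^{1/2}$ after squaring and summing over the $J_k$, plus $\text{\O}(u_{\leq cN(t)}^2u_{>2N}^2u)$, which is bounded by $\eta^2N^{-5/2}B_q(2N)$ via the Bernstein/Cauchy--Schwarz computation in \eqref{eq:lts hang1}; the latter is where the restriction $4<q<8$ enters and where the higher-scale cross-term you worried about actually arises. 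Your decomposition leaves these off-diagonal high-high terms in $F$, where neither $\nsc$ in $L_x^{4/3}$ (for the $A$ part) nor $|\nabla|^{-1}$ in $L_x^{4/3}$ (for the $B_q$ part) can be controlled with the available bounds.
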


\begin{remark} The proof that we give requires $4<q<8$; cf. \eqref{eq:lts hang1}. As Bernstein implies $B_{q}(N)\lesssim B_{r}(N)$ for $q>r$, we can deduce the result for $8\leq q\leq \infty$ \emph{a posteriori}. 
\end{remark}

The proof is by induction. The inductive step relies on the following lemma.
\begin{lemma}\label{lemma:lts} Let $0<\eta\ll 1$ and $4<q<8$. For any $N>0$ we have
\begin{equation}\label{eq:recurrence}
A(N)+B_q(N)\lesssim_u 1+C(\eta) N^{\frac{5}{2}}K^{\frac{1}{2}}+\eta [A(2N)+B_q(2N)].
\end{equation}
\end{lemma}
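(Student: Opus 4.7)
The plan is to bound $A(N)$ and $B_q(N)$ separately by applying a Strichartz-type estimate to the appropriate frequency-localized piece of $u$, and then to handle the resulting nonlinear terms via a frequency decomposition that extracts a factor of $\eta$ from almost periodicity where possible, while routing the ``all-high'' residual through the strong $L_x^1$-endpoint to produce $K^{1/2}$. Concretely, applying the Besov form of Proposition~\ref{P:Strichartz} to $P_{\leq N}u$ gives
$$
A(N) \lesssim \|\nsc P_{\leq N}u(t_*)\|_{L_x^2} + \|\nsc P_{\leq N}(|u|^4 u)\|_{L_t^2 L_x^{4/3}},
$$
while applying Proposition~\ref{P:maximal} to $P_{>N}u$ with a decomposition $|u|^4 u = F + G$ yields
$$
B_q(N) \lesssim N^{5/2}\||\nabla|^{-1}P_{>N}u(t_*)\|_{L_x^2} + \|\nsc F\|_{L_t^2 L_x^{4/3}} + N^{5/2}\|G\|_{L_t^2 L_x^1}
$$
after using Bernstein on the frequency-$>N$ support to trade the factor $N^{5/2}|\nabla|^{-1}$ for $\nsc$. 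Bernstein together with the $L_t^\infty\dot H_x^{3/2}$ hypothesis bounds each free-data term by $\lesssim_u 1$.

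Next, I would set $\ulo := u_{\leq c(\eta)N(t)}$ and $\uhi := u_{>c(\eta)N(t)}$, with $c(\eta)$ as in \eqref{APsmall}, and choose $G := |\uhi|^4\uhi$, $F := |u|^4 u - G$. Expanding $|u|^4 u = (\ulo + \uhi)^3(\overline{\ulo}+\overline{\uhi})^2$ shows that every monomial of $F$ carries at least one $\ulo$ or $\overline{\ulo}$ factor. For $G$, Littlewood--Paley orthogonality and Bernstein give $\|\uhi(t)\|_{L_x^2}\lesssim C(\eta)N(t)^{-3/2}$, and H\"older with $\dot H_x^{3/2}(\R^4)\hookrightarrow L_x^8(\R^4)$ yields
$$
\|G(t)\|_{L_x^1}\leq \|\uhi(t)\|_{L_x^8}^4\|\uhi(t)\|_{L_x^2}\lesssim_u C(\eta)N(t)^{-3/2},
$$
so that $N^{5/2}\|G\|_{L_t^2 L_x^1}\lesssim C(\eta)N^{5/2}K^{1/2}$, matching the target contribution.

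For the remaining $L_t^2 L_x^{4/3}$-norms, I would apply the fractional product and chain rules to spread $\nsc$ across the five factors and then further decompose each factor of $u$ as $u_{\leq 2N} + u_{>2N}$. In each monomial of $F$ the guaranteed low-frequency factor furnishes an $\eta$ via $\|\ulo\|_{L_t^\infty L_x^8}\lesssim \|\nsc\ulo\|_{L_t^\infty L_x^2}<\eta$ (or directly from \eqref{APsmall} when $\nsc$ lands on $\ulo$). Middle-frequency derivative factors contribute $\|\nsc u_{\leq 2N}\|_{L_t^2 L_x^4}\leq A(2N)$; high-frequency pieces $u_M$ with $M>2N$ are absorbed via the pointwise bound $\|u_M\|_{L_t^2 L_x^q}\leq M^{2-4/q}(2N)^{-5/2}B_q(2N)$, which follows directly from the definition of $B_q$; and the remaining harmless factors sit in $L_t^\infty L_x^8\lesssim_u 1$ by Sobolev. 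The analogous treatment of $\|\nsc P_{\leq N}(|u|^4 u)\|_{L_t^2 L_x^{4/3}}$ uses the same decomposition, with the outer projection $P_{\leq N}$ and frequency-support considerations handling the exceptional all-high monomial.

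The hard part will be the multilinear H\"older bookkeeping: matching spatial exponents to $\tfrac34$ and temporal ones to $\tfrac12$ while ensuring each monomial produces exactly one $\eta$-factor and exactly one factor of $A(2N)$ or $B_q(2N)$, with the dyadic sum $\sum_{M>2N}$ on the output side convergent. This summability constraint is what forces the restriction $q<8$ noted in the remark, while $q>4$ is inherited from the maximal Strichartz estimate. The decomposition $G = |\uhi|^4\uhi$ is designed precisely so that the exceptional all-high monomial---which has no $\ulo$ and hence no source of $\eta$---is routed through the $L_t^2 L_x^1$ endpoint rather than remaining in $L_t^2 L_x^{4/3}$, where it would spoil the $\eta[A(2N)+B_q(2N)]$ structure of the recurrence.
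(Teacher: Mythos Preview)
Your overall architecture matches the paper's: apply Strichartz to $P_{\leq N}u$ and the maximal Strichartz estimate to $P_{>N}u$, then split the nonlinearity into an $F$-part estimated in $\|\nsc\cdot\|_{L_t^2 L_x^{4/3}}$ and a $G$-part estimated in $L_t^2 L_x^1$. The handling of the free data and of $G=|\uhi|^4\uhi$ is fine. The gap is in your choice of $F$ and $G$.

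With $G=|\uhi|^4\uhi$ only, your $F$ contains monomials with a single $\ulo$ and four $\uhi$ factors, and after the further $u_{\leq 2N}/u_{>2N}$ split this produces pieces of the form $\ulo\,(u_{>2N})^4$. For these there is no $u_{\leq 2N}$ factor on which $\nsc$ can land to yield $A(2N)$, so you must route them through $B_q(2N)$. But the H\"older arithmetic does not close: placing $\ulo$ in $L_t^\infty L_x^8$, one $u_M$ with $M>2N$ in $L_t^2 L_x^q$, and the remaining three factors ``harmlessly'' in $L_t^\infty L_x^8$ gives spatial exponent $\tfrac18+\tfrac1q+\tfrac38=\tfrac12+\tfrac1q\neq\tfrac34$ unless $q=4$. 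Any attempt to repair this by putting the highest-frequency factor $u_{M_1}$ (or $\nsc u_{M_1}$) in $L_t^\infty L_x^2$ forces the remaining exponents to satisfy $\tfrac1{p_3}+\tfrac1{p_4}=\tfrac18-\tfrac1q<0$ for $q<8$, which is impossible; and if one leaves the derivative on a $u_{>2N}$ factor and sums dyadically, the net power of the top frequency is $+\tfrac52$, so the sum diverges. In short, monomials with two or more $u_{>2N}$ factors cannot be controlled in $\|\nsc(\cdot)\|_{L_t^2 L_x^{4/3}}$ by $\eta B_q(2N)$.

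The paper avoids this by enlarging $G$: it takes
\[
F=\text{\O}(u_{\leq cN(t)}^2 u_{\leq 2N}^2 u),\qquad G=\text{\O}(u_{\leq cN(t)}^2 u_{>2N}^2 u)+\text{\O}(u_{>cN(t)}^2 u^3),
\]
so that every monomial with at least two $u_{>2N}$ factors lands in $G$ and is estimated in $L_t^2 L_x^1$. There the paper places two $\ulo$ factors in $L_t^\infty L_x^8$ and estimates $\|u_{>2N}^2 u\|_{L_t^2 L_x^{4/3}}$ by putting the top frequency in $L_x^2$, the middle one in $L_x^q$ (this is where the $\sup_M$ in $B_q$ is used pointwise in $t$), and the third in $L_x^{4q/(q-4)}$; Schur then sums provided $\tfrac4q-\tfrac12>0$, which is exactly the $q<8$ restriction. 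This yields $N^{5/2}\|G\|_{L_t^2L_x^1}\lesssim_u \eta^2 B_q(2N)+C(\eta)N^{5/2}K^{1/2}$, with the second piece of $G$ giving the $K^{1/2}$ term via summation over characteristic subintervals using Lemma~\ref{L:SB}. Meanwhile $F$, now guaranteed to have at least two $u_{\leq 2N}$ factors, is estimated using Lemma~\ref{lem:endpt} (two factors in $L_t^4 L_x^\infty$) to produce $\eta A(2N)$. Your proposal can be repaired by adopting this wider $G$; the rest of your outline then goes through.
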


\begin{proof} We take space-time norms are over $I\times\R^4$ unless stated otherwise.

To begin, note that for any decomposition $|u|^4 u = F+G$, we may apply the standard Strichartz estimate (Proposition~\ref{P:Strichartz}) to $u_{\leq N}$, the maximal Strichartz estimate (Proposition~\ref{P:maximal}) to $u_{>N}$, Bernstein, and \eqref{E:bdd} to deduce
$$
A(N)+B_q(N)\lesssim_u 1+\|\nsc F\|_{L_t^2 L_x^{\frac{4}{3}}}+N^{\frac{5}{2}}\|G\|_{L_t^2 L_x^1}.
$$ 

We choose $c=c(\eta)$ as in \eqref{APsmall} and write $| u|^4u = F+G$, with
$$
F=\text{\O}(u_{\leq cN(t)}^2 u_{\leq 2N}^2 u),\quad G=\text{\O}(u_{\leq cN(t)}^2u_{>2N}^2u)+\text{\O}(u_{>cN(t)}^2u^3).
$$

Using fractional calculus, Sobolev embedding, Lemma~\ref{lem:endpt}, \eqref{E:bdd}, and \eqref{APsmall}, we first estimate
\begin{align}
\|\nsc(&u_{\leq cN(t)}^2 u_{\leq 2N}^2 u)\|_{L_t^2 L_x^{\frac{4}{3}}} \nonumber\\ 
&\lesssim \|u_{\leq cN(t)}\|_{L_t^\infty L_x^8}\|u\|_{L_t^\infty L_x^8}^3\|\nsc u_{\leq 2N}\|_{L_t^2 L_x^4}\nonumber\\ 
&\quad+\|u_{\leq cN(t)}\|_{L_t^\infty L_x^8}\|u\|_{L_t^\infty L_x^8}\|\nsc u\|_{L_t^\infty L_x^2}
	\|u_{\leq 2N}\|_{L_t^4L_x^\infty}^2\nonumber\\ 
&\lesssim_u \eta A(2N).	\label{E:LTShang}
\end{align}

Next, as $2<q<8$ we have $\tfrac{4}{q}-2<0<\tfrac{4}{q}-\tfrac{1}{2}.$ Thus, defining
$$
S=\{M_1,M_2, M_3\,|\, M_1\geq M_2 \geq M_3,\ M_2>2N\}
$$ 
and using Bernstein, Cauchy--Schwarz, and \eqref{E:bdd}, we find
\begin{align}
\|&u_{\leq cN(t)}^2 u_{>2N}^2 u\|_{L_t^2 L_x^1}\nonumber \\
& \lesssim \|u_{\leq cN(t)}\|_{L_t^\infty L_x^8}^2
	\|u_{>2N}^2 u\|_{L_t^2 L_x^{\frac{4}{3}}}\nonumber \\
&\lesssim\eta^2\bigg\|\sum_S\|u_{M_1}(t)\|_{L_x^2}\|u_{M_2}(t)\|_{L_x^q}
	\|u_{M_3}(t)\|_{L_x^\frac{4q}{q-4}}\bigg\|_{L_t^2}\nonumber\\ 
&\lesssim \eta^2\bigg\|\sum_S M_1^{-\frac32}M_3^{\frac4q-\frac12}\| u_{M_1}(t)\|_{\dot{H}_x^{\frac32}}\|u_{M_2}(t)\|_{L_x^q}
	\|u_{M_3}(t)\|_{\dot{H}_x^{\frac32}}\bigg\|_{L_t^2}\nonumber\\ 
&\lesssim \eta^2 \bigg\|\sup_{M>2N} M^{\frac4q-2}\|u_M(t)\|_{L_x^q}\sum_{M_1\geq M_3} (\tfrac{M_3}{M_1})^{\frac{4}{q}-\frac{1}{2}-}
	\|u_{M_1}(t)\|_{\dot{H}_x^{\frac32}}\|u_{M_3}(t)\|_{\dot{H}_x^{\frac32}}\bigg\|_{L_t^2}\nonumber\\ 
&\lesssim \eta^2 \|\nsc u\|_{L_t^\infty L_x^2}^2N^{-\frac{5}{2}}B_q(2N)\lesssim_u \eta^2 N^{-\frac{5}{2}} B_q(2N). \label{eq:lts hang1}
\end{align}	
	
Finally, restricting attention to an individual characteristic subinterval $J_k$, we use Sobolev embedding, Bernstein, and \eqref{E:bdd} to estimate
\begin{align*}
\|u_{>cN_k}^2 u^3\|_{L_t^2 L_x^1}
\lesssim \|u_{>cN_k}\|_{L_t^4 L_x^{\frac{16}{5}}}^2\|u\|_{L_t^\infty L_x^8}^3
&\lesssim_u \||\nabla|^{\frac{1}{4}}u_{>cN_k}\|_{L_t^4L_x^{\frac{8}{3}}}^2\\ 
&\lesssim_u C(\eta) N_k^{-\frac{5}{2}}\|\nsc u\|_{L_t^4 L_x^{\frac{8}{3}}}^2.
\end{align*}
We now square and sum over $J_k\subset I$, using Lemma~\ref{L:SB} and \eqref{def:K}. We find
\begin{equation}\label{eq:lts hang2}
\|u_{>cN(t)}^2u^3\|_{L_t^2L_x^1}\lesssim_u C(\eta) K^{\frac{1}{2}}.
\end{equation}

Collecting our estimates, we complete the proof of Lemma~\ref{lemma:lts}. 
\end{proof}

With Lemma~\ref{lemma:lts} in place we can now prove Proposition~\ref{prop:lts}.

\begin{proof}[Proof of Proposition~\ref{prop:lts}] We proceed by induction. For the base case, take $N\geq\sup_{t\in I}N(t)$. In this case, we first use Strichartz (Proposition~\ref{P:Strichartz}) and Lemma~\ref{L:SB} to estimate
\begin{align*}
[A(N)]^2\lesssim_u1+\int_I N(t)^2\lesssim_u 1+N^5K.
\end{align*}
For $B_q(N)$, we instead use the maximal Strichartz estimate (Proposition~\ref{P:maximal}), Lemma~\ref{L:SB}, fractional calculus, and \eqref{E:bdd}. We find
\begin{align*}
B_q(N)&\lesssim_u N^{\frac{5}{2}}\bigl[ \||\nabla|^{-1}u_{>N}\|_{L_t^\infty L_x^2}+
	\||\nabla|^{-1}P_{>N}\bigl(|u|^4 u\bigr)\|_{L_t^2L_x^{\frac{4}{3}}}\bigr]\\ 
&\lesssim_u 1+\|u\|_{L_t^\infty L_x^8}^4\|\nsc u\|_{L_t^2 L_x^4}\lesssim_u 1+N^{\frac{5}{2}}K^{\frac{1}{2}}.
\end{align*}

If we now suppose that \eqref{eq:lts} holds at frequency $2N$, then we can use Lemma~\ref{lemma:lts} to show that \eqref{eq:lts} holds at frequency $N$, provided we choose $\eta=\eta(u)$ sufficiently small. For the details of such an argument, one can refer to \cite{MMZ, Mu, Mu3, Visan2011}.
\end{proof} 

We record here some consequences of Proposition~\ref{prop:lts} to be used in Section~\ref{S:QS}.
\begin{corollary}\label{C:lts} Let $u,I$ be as in Proposition~\ref{prop:lts}, with $K$ as in \eqref{def:K}. Define
\begin{equation}\label{hivslo}
\PHi=P_{>K^{-\frac15}},\quad \PLo=P_{\leq K^{-\frac15}},\quad \uhi=\PHi u,\quad \ulo=\PLo u.
\end{equation}
Then
\begin{align}
&\|\nsc\ulo\|_{L_{t,x}^3}+\|\nsc\ulo\|_{L_t^4 L_x^{\frac83}}+\|\ulo\|_{L_t^4 L_x^\infty}\lesssim_u 1,\label{E:ulo} \\
&\|\sup_M M^{-2}\|P_M\uhi\|_{L_x^\infty}\|_{L_t^2}^2\lesssim_u K, \label{E:uhi1} \\
&\|\nabla\uhi\|_{L_t^{12} L_x^{\frac{24}{11}}} \lesssim_u K^{\frac{1}{10}}, \label{E:uhi2} \\
&\|\PHi(\ulo^2 u^3)\|_{L_t^2 L_x^{\frac43}}^2\lesssim_u K^{\frac35}, 
\quad \|\uhi^2 u^3\|_{L_t^2 L_x^1}^2\lesssim_u K,\label{E:non}
\end{align}
where all space-time norms are over $I\times\R^4$. 
\end{corollary}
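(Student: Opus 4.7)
The plan is to apply Proposition~\ref{prop:lts} at the distinguished scale $N=K^{-1/5}$, at which $N^{5/2}K^{1/2}=1$ and hence $A(K^{-1/5})+B_q(K^{-1/5})\lesssim_u 1$ for every $q\in(4,\infty]$. This matches the frequency cutoff defining $\uhi,\ulo$: rearranging the $B_q$-bound yields $\|\sup_{M>K^{-1/5}}M^{4/q-2}\|u_M\|_{L_x^q}\|_{L_t^2}\lesssim_u K^{1/2}$, while $A(K^{-1/5})\lesssim_u 1$ gives Besov square-summable control of the low frequencies.

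For \eqref{E:ulo}, I would pass from $A(K^{-1/5})\lesssim_u 1$ to $\|\nsc\ulo\|_{L_t^2L_x^4}\lesssim_u 1$ via the Littlewood--Paley square function together with Minkowski's inequality in the favorable direction (since $4\geq 2$), then interpolate with the trivial bound $\|\nsc\ulo\|_{L_t^\infty L_x^2}\lesssim_u 1$ to obtain the $L_{t,x}^3$ and $L_t^4 L_x^{8/3}$ endpoints. For $\|\ulo\|_{L_t^4 L_x^\infty}$, I would apply Lemma~\ref{lem:endpt} to $\ulo$; the square sum appearing there is exactly $A(K^{-1/5})^2\lesssim_u 1$. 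Claim \eqref{E:uhi1} is immediate from $B_\infty(K^{-1/5})\lesssim_u 1$ after squaring.

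For \eqref{E:uhi2}, I would dominate $\|\nabla\uhi\|_{L_t^{12}L_x^{24/11}}$ by $\bigl(\sum_{M>K^{-1/5}}\|\nabla u_M\|_{L_t^{12}L_x^{24/11}}^2\bigr)^{1/2}$ using Minkowski's inequality twice---in $L_x^{24/11}$ (favorable because $24/11\leq 2$) and in $L_t^{12}$ (favorable because $12\geq 2$). For each dyadic piece, H\"older interpolation via $L_t^{12}L_x^{24/11}=(L_t^\infty L_x^2)^{5/6}(L_t^2L_x^4)^{1/6}$, combined with $\|\nabla u_M\|_{L_t^\infty L_x^2}\lesssim_u M^{-1/2}$ and $\|\nabla u_M\|_{L_t^2L_x^4}\sim M^{-1/2}\|\nsc u_M\|_{L_t^2L_x^4}\leq M^{-1/2}A(M)\lesssim_u M^2K^{1/2}$ (from Proposition~\ref{prop:lts}, valid for $M>K^{-1/5}$), yields $\|\nabla u_M\|_{L_t^{12}L_x^{24/11}}\lesssim_u M^{-1/12}K^{1/12}$. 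The $\ell^2$-sum over $M>K^{-1/5}$ converges and contributes a factor $K^{1/60}$, producing the target $K^{1/12+1/60}=K^{1/10}$.

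For \eqref{E:non}, I would exploit the frequency support of $\PHi$: a square-function argument gives $\|\PHi h\|_{L_x^{4/3}}\lesssim (K^{-1/5})^{-3/2}\|\nsc h\|_{L_x^{4/3}}=K^{3/10}\|\nsc h\|_{L_x^{4/3}}$. Applied with $h=\ulo^2u^3$, fractional calculus together with \eqref{E:ulo}, Sobolev, and Proposition~\ref{P:decay} gives $\|\nsc(\ulo^2u^3)\|_{L_t^2L_x^{4/3}}\lesssim_u 1$, and hence the first inequality after squaring. For $\|\uhi^2u^3\|_{L_t^2L_x^1}$, I would use H\"older to put two copies of $\uhi$ into a space that captures their $K^{-1/5}$-Bernstein smallness and the three $u$ factors into $L_t^\infty L_x^q$ for $q\in(40/11,8]$ from Proposition~\ref{P:decay}, with the time-integration gain supplied by \eqref{E:uhi2} or by Lemma~\ref{lem:endpt} combined with Proposition~\ref{prop:lts}. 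I expect the main obstacle to be the careful bookkeeping in \eqref{E:uhi2} and the second half of \eqref{E:non}: organizing each Minkowski and H\"older step to flow in the favorable direction, and arranging the various $K$-powers so that they combine to exactly $K^{3/5}$ and $K$.
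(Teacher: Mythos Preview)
Your plan for \eqref{E:ulo}, \eqref{E:uhi1}, and the first half of \eqref{E:non} is correct and coincides with the paper's argument. There are, however, two genuine issues.

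\textbf{The Minkowski step in \eqref{E:uhi2} goes the wrong way.} You claim that passing from the square function to the $\ell^2$-sum is ``favorable because $24/11\leq 2$''. In fact the direction is reversed: for $p<2$ one has $\bigl\|\bigl(\sum_M|g_M|^2\bigr)^{1/2}\bigr\|_{L_x^p}\geq\bigl(\sum_M\|g_M\|_{L_x^p}^2\bigr)^{1/2}$, so the $\ell^2$-sum is a \emph{lower} bound. The easy fix is to use the triangle inequality ($\ell^1$) instead; since your per-piece bound $\|\nabla u_M\|_{L_t^{12}L_x^{24/11}}\lesssim_u M^{-1/12}K^{1/12}$ is geometrically summable over $M>K^{-1/5}$, you still recover $K^{1/10}$. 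The paper avoids the issue altogether by interpolating first at the level of $\uhi$,
\[
\|\nabla\uhi\|_{L_t^{12}L_x^{24/11}}^6\lesssim\||\nabla|^{-3/2}\uhi\|_{L_t^2L_x^4}\,\|\nsc\uhi\|_{L_t^\infty L_x^2}^5,
\]
and then decomposing only the $L_t^2L_x^4$ factor by the triangle inequality.

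\textbf{The second half of \eqref{E:non} is not yet a proof.} Your sketch---H\"older splitting with $u^3$ in $L_t^\infty L_x^q$ from Proposition~\ref{P:decay} and time gain from \eqref{E:uhi2} or Lemma~\ref{lem:endpt}---does not close: Lemma~\ref{lem:endpt} controls \emph{low}-frequency pieces via $A(N)$, not $\uhi$, and placing $\uhi$ in interpolated Strichartz spaces produces factors of $A(M)$ at large $M$ that are not summable. The paper instead reuses the dyadic argument of \eqref{eq:lts hang1} verbatim: write $\|\uhi^2u^3\|_{L_t^2L_x^1}\lesssim\|u\|_{L_t^\infty L_x^8}^2\|\uhi^2u\|_{L_t^2L_x^{4/3}}$, decompose the three remaining factors into frequencies $M_1\geq M_2\geq M_3$ with $M_2>K^{-1/5}$, and absorb the $M_2$ factor into the maximal quantity $\sup_{M>K^{-1/5}}M^{4/q-2}\|u_M\|_{L_x^q}$ with $4<q<8$. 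The $B_q$ bound from Proposition~\ref{prop:lts} at $N=K^{-1/5}$ then yields $\|\uhi^2u^3\|_{L_t^2L_x^1}\lesssim_u K^{1/2}$. The essential ingredient you are missing is this use of the maximal Strichartz structure $B_q$; without it the high-frequency time integrability is not available.
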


\begin{proof} By Proposition~\ref{prop:lts} and the definition of $\PLo$, 
\begin{equation}\label{pf:ulo}
\|\nsc\ulo\|_{L_t^2 L_x^4}^2\lesssim \sum_{M}\|\nsc P_M \ulo\|_{L_t^2 L_x^4}^2\lesssim_u 1.
\end{equation}
Thus \eqref{E:ulo} follows from interpolation, \eqref{pf:ulo}, \eqref{E:bdd}, and Lemma~\ref{lem:endpt}. The estimate \eqref{E:uhi1} also follows immediately from Proposition~\ref{prop:lts} (with $q=\infty$) and the definition of $\PHi$. 

Next, by interpolation and Proposition~\ref{prop:lts}, 
\begin{align*}
\|\nabla\uhi\|_{L_t^{12} L_x^{\frac{24}{11}}}^6&\lesssim \| |\nabla|^{-\frac32}\uhi\|_{L_t^2 L_x^4}
	\| \nsc \uhi\|_{L_t^\infty L_x^2}^{5} \lesssim_u\sum_{N>K^{-\frac15}} \||\nabla|^{-\frac32} u_N\|_{L_t^2 L_x^4} \\
&\lesssim_u \sum_{N>K^{-\frac15}} N^{-3}(1+N^5 K)^{\frac12} \lesssim_u K^{\frac{3}{5}},
\end{align*}
which implies \eqref{E:uhi2}. 

For the first estimate in \eqref{E:non}, we use Bernstein, estimate as in \eqref{E:LTShang}, and use Proposition~\ref{prop:lts}. For the second estimate in \eqref{E:non}, we estimate as in \eqref{eq:lts hang1} and use Proposition~\ref{prop:lts}. \end{proof}


\section{Preclusion of rapid frequency-cascades}\label{S:rfc}

In this section, we preclude the existence of almost periodic solutions as in Theorem~\ref{T:AP} for which \eqref{E:KF} holds. Throughout this section, we denote
$$
K=\int_0^{T_{max}}N(t)^{-3}\,dt<\infty.
$$

We will use the long-time Strichartz estimate (Proposition~\ref{prop:lts}) and the reduced Duhamel formula (Proposition~\ref{P:RD}) to show that the existence of such solutions is inconsistent with the conservation of mass. 

Note that \eqref{E:KF} implies
\begin{equation}\label{eq:cascade}
\lim_{t\to T_{max}}N(t)=\infty.
\end{equation}
This is clear if $T_{max}=\infty$, while if $T_{max}<\infty$, this follows from Corollary~\ref{cor:blowup}. 

\begin{theorem}[No rapid frequency-cascades] There are no almost periodic solutions as in Theorem~\ref{T:AP} such that \eqref{E:KF} holds.
\end{theorem}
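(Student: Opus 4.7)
The plan is to derive a contradiction with conservation of mass by showing that (a) $u\in L_t^\infty L_x^2([0,T_{max})\times\R^4)$, (b) $\|u(t)\|_{L_x^2}\to 0$ as $t\to T_{max}$, and (c) the mass $\|u(t)\|_{L_x^2}^2$ is constant in $t$; combining these forces $u\equiv 0$, contradicting the blowup hypothesis. First note that \eqref{E:KF} and Corollary~\ref{cor:blowup} together give $N(t)\to\infty$ as $t\to T_{max}$, namely \eqref{eq:cascade}. Since $K<\infty$, the long-time Strichartz bounds of Proposition~\ref{prop:lts} and the consequent estimates of Corollary~\ref{C:lts} yield constants depending only on $u$, with no dependence on $T_{max}$.

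For (a) and (b), I would invoke the reduced Duhamel formula (Proposition~\ref{P:RD}) together with the endpoint Strichartz estimate for the admissible pair $(2,4)$ and its dual $(2,4/3)$. For any $t\in[0,T_{max})$ and $T\in(t,T_{max})$,
$$
\Bigl\|i\int_t^T e^{i(t-s)\Delta}(|u|^4 u)(s)\,ds\Bigr\|_{L_x^2}\lesssim \||u|^4 u\|_{L_s^2 L_x^{4/3}([t,T]\times\R^4)}.
$$
Passing to the weak limit as $T\nearrow T_{max}$ and identifying the weak-$L_x^2$ limit with the weak-$\dot H_x^{3/2}$ limit from Proposition~\ref{P:RD} (justified once a uniform $L_x^2$-bound on the approximants is in hand) gives
$$
\|u(t)\|_{L_x^2}\lesssim \||u|^4 u\|_{L_s^2 L_x^{4/3}([t,T_{max})\times\R^4)}.
$$
Once the global bound $\||u|^4 u\|_{L_t^2 L_x^{4/3}([0,T_{max}))}<\infty$ is established, (a) follows from the full-interval estimate and (b) from absolute continuity of the integral as $t\to T_{max}$.

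To establish this finiteness I would decompose $u=\uhi+\ulo$ as in Corollary~\ref{C:lts} and expand $|u|^4 u$ into a finite sum classified by the number of $\uhi$ factors. Terms containing at least two $\uhi$ factors are handled by $\|\uhi^2 u^3\|_{L_t^2 L_x^1}^2\lesssim_u K$ from Corollary~\ref{C:lts} (upgraded to $L_x^{4/3}$ after Littlewood--Paley projections via Bernstein at the scale $K^{-1/5}$). Terms with at most one $\uhi$ factor are handled by $\|\PHi(\ulo^2 u^3)\|_{L_t^2 L_x^{4/3}}^2\lesssim_u K^{3/5}$ together with a direct estimate on the complementary low-frequency piece $\PLo(\ulo^2 u^3)$; for the latter, Proposition~\ref{P:decay} (giving $u\in L_t^\infty L_x^q$ for all $q>\tfrac{40}{11}$) combined with Sobolev embedding and Bernstein at the fixed scale $K^{-1/5}$ supplies the needed integrability. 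Finally, for (c), since $u\in L_t^\infty(L_x^2\cap\dot H_x^{3/2})$ obeys the Duhamel formula, standard approximation arguments (mollify the data, apply classical $L_x^2$ conservation, pass to the limit using Strichartz continuity) yield $\|u(t)\|_{L_x^2}^2\equiv\|u(0)\|_{L_x^2}^2$; combined with (b), this forces $u\equiv 0$.

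The main obstacle is precisely the global finiteness of $\||u|^4 u\|_{L_t^2 L_x^{4/3}([0,T_{max}))}$. The naive bound $\|u^5\|_{L_t^2 L_x^{4/3}}\leq\|u\|_{L_t^\infty L_x^8}^4\,\|u\|_{L_t^2 L_x^4}$ is useless because $\|u\|_{L_t^2 L_x^4}$ is not controlled at low frequencies by the $L_t^\infty\dot H_x^{3/2}$ norm, nor by Proposition~\ref{prop:lts} in the absence of derivatives. One must therefore fully exploit the $\uhi/\ulo$ decomposition and the $L_t^\infty L_x^q$ decay afforded by Proposition~\ref{P:decay} to absorb the troublesome low-frequency contributions using only the Strichartz data afforded by $K<\infty$.
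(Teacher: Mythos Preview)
Your overall strategy---reduced Duhamel plus endpoint Strichartz, then mass conservation---matches the paper's, but the execution has a genuine gap: the quantity $\||u|^4u\|_{L_t^2 L_x^{4/3}([0,T_{max})\times\R^4)}$ need not be finite under the sole hypothesis $K=\int_0^{T_{max}}N(t)^{-3}\,dt<\infty$. By almost periodicity, $u(t,x)\sim N(t)^{1/2}v(N(t)(x-x(t)))$ with $v$ ranging in a compact set, so $\||u(t)|^4 u(t)\|_{L_x^{4/3}}\sim N(t)^{-1/2}$ and hence $\||u|^4u\|_{L_t^2 L_x^{4/3}}^2\sim\int_0^{T_{max}}N(t)^{-1}\,dt$. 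This integral is not controlled by $K$: for instance, $N(t)\sim t^{1/2}$ on $[1,\infty)$ gives $\int N^{-3}<\infty$ but $\int N^{-1}=\infty$. Consequently neither the ``upgrade'' of $\|\uhi^2 u^3\|_{L_t^2 L_x^1}$ to $L_x^{4/3}$ (which in any case requires an \emph{output} frequency projection that you do not have) nor any termwise decomposition can yield the finite global bound you need, and your absolute-continuity argument for (b) collapses with it.

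The paper circumvents this by never asking for global $L_t^2 L_x^{4/3}$ control of the full nonlinearity. Instead it first runs a bootstrap (Step~1) on the coupled quantity $\|u_{N<\cdot\leq 1}\|_{L_t^\infty L_x^2}+N^{-3/2}A(N)$, using the reduced Duhamel formula together with a careful decomposition of $|u|^4u$ keyed both to the scale $cN(t)$ and to the fixed dyadic scales $N$ and $1$; the recurrence closes because the low-frequency pieces gain the small factor $\eta$ while the high-frequency pieces land in $L_t^2 L_x^1$ and are bounded by $K^{1/2}$. This yields $u\in L_t^\infty L_x^2$ without ever needing $|u|^4u\in L_t^2 L_x^{4/3}$ globally. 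Only \emph{after} the mass bound is in hand does the paper estimate $\||u|^4u\|_{L_t^2 L_x^1}\lesssim_u 1$ (note: $L_x^1$, which scales with $K$, not $L_x^{4/3}$), apply $P_{\leq N}$ and Bernstein to get $\|u_{\leq N}\|_{L_t^\infty L_x^2}\lesssim_u N$, and from this deduce negative regularity $|\nabla|^{-1/2}u\in L_t^\infty L_x^2$. The vanishing of mass then follows from interpolation with \eqref{APsmall} and $N(t)\to\infty$, rather than from absolute continuity of a spacetime norm. The missing idea in your proposal is precisely this bootstrap step that produces the $L_x^2$ bound \emph{before} attempting to estimate the nonlinearity globally.
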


\begin{proof} As $K$ is finite, we can extend the conclusion of Proposition~\ref{prop:lts} to the whole interval $[0,T_{max})$. Throughout the proof, all space-time norms are taken over $[0,T_{max})\times\R^4$. We proceed in three steps.

\textbf{Step 1.} We show that
\begin{equation}\label{eq:rf mass1}
\|u_{N<\cdot\leq 1}\|_{L_t^\infty L_x^2}+N^{-\frac32}A(N)\lesssim_u 1
\end{equation}
uniformly for $0<N<1$, where $A(N)$ is as in \eqref{def:A}.

By \eqref{E:bdd}, Bernstein, Proposition~\ref{prop:lts}, and \eqref{E:KF}, the quantity appearing in \eqref{eq:rf mass1} is finite for each $N>0$: 
\begin{align}
\|u_{N<\cdot\leq 1}\|_{L_t^\infty L_x^2}+N^{-\frac32}A(N)&\lesssim_u N^{-\frac32}\big[1+N^{\frac{5}{2}}K^{\frac{1}{2}}\big]
<\infty.\label{eq:rf mass2}
\end{align}

To begin, the reduced Duhamel formula (Proposition~\ref{P:RD}) and Strichartz imply
\begin{align*}
\text{LHS}\eqref{eq:rf mass1}&\lesssim\|P_{N<\cdot\leq 1}\bigl(|u|^4u\bigr)\|_{L_t^2 L_x^{\frac{4}{3}}}
	+N^{-\frac32}\|\nsc P_{\leq N}\bigl(|u|^4u\bigr)\|_{L_t^2 L_x^{\frac{4}{3}}}.
\end{align*} 

Now we let $\eta>0$ and choose $c=c(\eta)$ as in \eqref{APsmall}. To decompose the nonlinearity, we first write $u=u_{\leq cN(t)}+u_{>cN(t)}$ and subsequently $u=u_{\leq N}+u_{N<\cdot\leq 1}+u_{>1}$. According to our notation,  $u_{\leq N}$ and $ u_{N<\cdot\leq 1}$ are both $\text{\O}(u_{\leq 1})$. Thus,
\begin{align*}
|u|^4u&=\text{\O}(u^3u_{>cN(t)}^2)+\text{\O}(u^3u_{>1}^2)+\text{\O}(u^2u_{\leq cN(t)}u_{\leq N}^2)
	+\text{\O}(u u_{\leq cN(t)} u_{\leq 1}^2 u_{N<\cdot\leq 1}).
\end{align*}

First, Bernstein, \eqref{eq:lts hang2}, and \eqref{E:KF} imply
\begin{align*}
\|P_{N<\cdot\leq 1}&(u^3u_{>cN(t)}^2)\|_{L_t^2 L_x^{\frac{4}{3}}}+
	N^{-\frac32}\||\nabla|^{\frac32}P_{\leq N}(u^3u_{>cN(t)}^2)\|_{L_t^2 L_x^{\frac{4}{3}}} \\
&\lesssim (1+N)\|u^3u_{>cN(t)}^2\|_{L_t^2 L_x^1} \lesssim_u C(\eta) K^{\frac{1}{2}}\lesssim_u 1.
\end{align*}

Second, using Bernstein, \eqref{APsmall}, \eqref{eq:lts hang1}, Proposition~\ref{prop:lts}, and \eqref{E:KF},  we have
\begin{align*}
\|P_{N<\cdot\leq 1}&(u^3 u_{>1}^2)\|_{L_t^2 L_x^{\frac{4}{3}}}+N^{-\frac32}\|\nsc P_{\leq N}(u^3u_{>1}^2)\|_{L_t^4 L_x^{\frac{4}{3}}}\\
& \lesssim (1+N)\|u^3u_{>1}^2 \|_{L_t^2 L_x^1}\lesssim \|u\|_{L_t^\infty L_x^8}^2\|u_{>1}^2u\|_{L_t^2 L_x^{\frac{4}{3}}}
	\lesssim_u 1+K^{\frac{1}{2}}\lesssim_u 1.
\end{align*}
	
Third, Bernstein, fractional calculus, Lemma~\ref{lem:endpt}, \eqref{APsmall} and \eqref{E:bdd} give
\begin{align*}	
\|P_{N<\cdot\leq 1}&(u^2 u_{\leq cN(t)} u_{\leq N}^2)\|_{L_t^2L_x^{\frac{4}{3}}}
	+N^{-\frac32}\||\nabla|^{\frac32}P_{\leq N}(u^2 u_{\leq cN(t)}u_{\leq N}^2)\|_{L_t^2 L_x^{\frac{4}{3}}} \\
&\lesssim N^{-\frac32}\|u\|_{L_t^\infty L_x^8}\|\nsc u\|_{L_t^\infty L_x^2}\|u_{\leq cN(t)}\|_{L_t^\infty L_x^8}
	\|u_{\leq N}\|_{L_t^4 L_x^\infty}^2 \\ 
&\quad+N^{-\frac32}\|u\|_{L_t^\infty L_x^8}^2\|\nsc u_{\leq cN(t)}\|_{L_t^\infty L_x^2}\|u_{\leq N}\|_{L_t^4L_x^\infty}^2\\ 
&\quad+N^{-\frac32}\|u\|_{L_t^\infty L_x^8}^3\|u_{\leq cN(t)}\|_{L_t^\infty L_x^8}\|\nsc u_{\leq N}\|_{L_t^2 L_x^4} \lesssim_u \eta N^{-\frac32}A(N).
\end{align*}

Finally, by Bernstein, Lemma~\ref{lem:endpt}, \eqref{E:bdd}, \eqref{APsmall}, Proposition~\ref{prop:lts}, and \eqref{E:KF},
\begin{align*}
\|u &u_{\leq cN(t)}u_{\leq 1}^2 u_{N<\cdot\leq 1}\|_{L_t^2L_x^{\frac{4}{3}}}+N^{-\frac32}
	\|\nsc P_{\leq N}(uu_{\leq cN(t)}u_{\leq 1}^2 u_{N<\cdot\leq 1})\|_{L_t^2 L_x^{\frac{4}{3}}} \\ 
&\lesssim \|u\|_{L_t^\infty L_x^8}\|u_{\leq cN(t)}\|_{L_t^\infty L_x^8}\|u_{\leq 1}\|_{L_t^4L_x^\infty}^2\|u_{N<\cdot\leq 1}\|_{L_t^\infty L_x^2}\\ 
&\lesssim_u \eta(1+K^{\frac{1}{2}})\|u_{N<\cdot\leq 1}\|_{L_t^\infty L_x^2}\lesssim_u \eta \|u_{N<\cdot\leq 1}\|_{L_t^\infty L_x^2}.
\end{align*}

Collecting our estimates, we find
$$
\text{LHS}\eqref{eq:rf mass1}\lesssim_u 1+\eta\text{LHS}\,\eqref{eq:rf mass1}.
$$
Choosing $\eta$ sufficiently small and recalling \eqref{eq:rf mass2}, we recover \eqref{eq:rf mass1}. 

We record here an important consequence of \eqref{eq:rf mass1}, namely
\begin{equation}\label{eq:rf mass3}
\|u\|_{L_t^\infty L_x^2}\lesssim_u 1.
\end{equation}
Indeed, sending $N\to 0$ in \eqref{eq:rf mass1} one can control the low frequencies, while Bernstein and \eqref{E:bdd} control the high frequencies.  

\textbf{Step 2.} We upgrade \eqref{eq:rf mass3} to  
\begin{equation}\label{eq:rf mass4}
\|u_{\leq N}\|_{L_t^\infty L_x^2}\lesssim_u N\quad\text{for}\quad N>0.
\end{equation}
First, the reduced Duhamel formula (Proposition~\ref{P:RD}) and Bernstein imply
\begin{align*}
\|u_{\leq N}\|_{L_t^\infty L_x^2}&\lesssim\|P_{\leq N}\bigl(|u|^4u\bigr)\|_{L_t^2 L_x^{\frac{4}{3}}}\lesssim N\||u|^4u\|_{L_t^2 L_x^1}.
\end{align*} 

Writing $|u|^4u=\text{\O}(u_{\leq 1}^3u^2)+\text{\O}(u_{>1}^2 u^3)$, we then use Lemma~\ref{lem:endpt}, Bernstein, \eqref{E:bdd}, \eqref{eq:rf mass3}, Proposition~\ref{prop:lts}, \eqref{E:KF} and \eqref{eq:lts hang1} to estimate
\begin{align*}
&\|u_{\leq 1}^3u^2\|_{L_t^2 L_x^1}\lesssim \|u_{\leq 1}\|_{L_t^4 L_x^\infty}^2\|u_{\leq 1}\|_{L_{t,x}^\infty}
	\|u\|_{L_t^\infty L_x^2}^2\lesssim_u 1,\\
&\|u_{>1}^2 u^3\|_{L_t^2 L_x^1}\lesssim_u 1.
\end{align*} 

\textbf{Step 3.} We show $\|u(t)\|_{L_x^2}\equiv 0$, contradicting that $u$ is a blowup solution. 

We first establish some negative regularity: Bernstein, \eqref{eq:rf mass4} and \eqref{E:bdd} imply
\begin{align}
\||\nabla|^{-\frac{1}{2}}u\|_{L_t^\infty L_x^2}&\lesssim_u \||\nabla|^{-\frac{1}{2}}u_{>1}\|_{L_t^\infty L_x^2}+
	\sum_{N\leq 1} \||\nabla|^{-\frac{1}{2}}u_N\|_{L_t^\infty L_x^2}\nonumber \\ 
&\lesssim_u \||\nabla|^{\frac32}u\|_{L_t^\infty L_x^2}+\sum_{N\leq 1} N^{\frac{1}{2}}\lesssim_u 1. \label{eq:rf negative}
\end{align}

Given $\eta>0$, we choose $c=c(\eta)$ as in \eqref{APsmall}. Interpolating \eqref{eq:rf negative} with \eqref{APsmall} yields
$$
\|P_{\leq cN(t)}u\|_{L_t^\infty L_x^2}\lesssim_u \eta^{\frac{1}{4}}.
$$ 
On the other hand, Bernstein and \eqref{E:bdd} give
$$
\|P_{>cN(t)}u\|_{L_t^\infty L_x^2}\lesssim [c(\eta)N(t)]^{-\frac32}\|\nsc u\|_{L_t^\infty L_x^2}\lesssim_u [c(\eta)N(t)]^{-\frac32}.
$$
Choosing $\eta$ small, sending $t\to T_{max}$, and recalling \eqref{eq:cascade}, we deduce that $\|u(t)\|_{L_x^2}\to 0$ as $t\to T_{max}$. By conservation of mass, it follows that $\|u(t)\|_{L_x^2}\equiv 0$, as needed. 
\end{proof}

\section{Preclusion of quasi-solitons}\label{S:QS}

In this section we preclude the possibility of almost periodic solutions to \eqref{nls} such that \eqref{E:KI} holds. Our main tool is a space-localized interaction Morawetz inequality (Proposition~\ref{P:IM}).  To control the error terms, we rely first on the long-time Strichartz estimate (specifically, Corollary~\ref{C:lts}). We also use Proposition~\ref{L:mass}, which suffers a logarithmic loss (cf. Corollary~\ref{C:mass} below). As explained in the introduction, we overcome this logarithmic loss by using an appropriate Morawetz weight and exploiting the energy-supercriticality of \eqref{nls}.

The main result of this section is the following theorem. 

\begin{theorem}[No quasi-solitons]\label{T:QS} There are no almost periodic solutions as in Theorem~\ref{T:AP} such that \eqref{E:KI} holds. 
\end{theorem}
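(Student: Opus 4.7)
The plan is to establish a space-localized interaction Morawetz inequality whose positive diagonal term dominates $\int_I N(t)^{-3}\,dt$ and then to combine it with a uniform-in-$I$ upper bound on the Morawetz action to contradict \eqref{E:KI}. Following the outline in the introduction, I would take the radial weight $a$ with $a'(r)\sim 1$ for $r\leq R$, $|\partial_r^k a'(r)|\lesssim_k J^{-1}r^{-k}$ for $R\leq r\leq Re^J$, and $a'\equiv 0$ for $r>Re^J$, where $R\gg 1$ and $J\sim\log R$. Because $N(t)$ varies, I would also introduce a time-dependent rescaling $\lambda(t)\sim N(t)^{-1}$ produced from $N(t)^{-1}$ by the smoothing algorithm of \cite{Dodson, Dodson14}, designed so that $|\lambda'(t)|$ is controlled by $N(t)$, and consider
\begin{equation*}
M(t) := \iint |u(t,y)|^2\,\nabla\!\Bigl[a\!\left(\tfrac{x-y}{\lambda(t)}\right)\!\Bigr]\cdot 2\,\Im\bigl(\bar u(t,x)\,\nabla u(t,x)\bigr)\,dx\,dy.
\end{equation*}

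Differentiating $M(t)$ along the flow \eqref{nls} and performing the standard integrations by parts produces the usual positive contributions (from $a''\geq 0$ and from the defocusing sextic piece $|u|^6|u(y)|^2$), an annular error supported on $R\lambda(t)\leq|x-y|\leq Re^J\lambda(t)$ where $a$ degenerates, nonlinear error terms coming from $\Im(\bar u\,\nabla|u|^4u)$, and a term of size $\lambda'(t)/\lambda(t)$ generated by the time dependence of the weight. A scaling analysis of the kind indicated after \eqref{IM4} in the introduction --- using $|u(t,\cdot)|\sim N_k^{1/2}$ on a ball of size $N_k^{-1}$ coming from $\dot H^{3/2}$-criticality and almost periodicity --- shows that on each characteristic subinterval $J_k$ the positive diagonal piece is bounded below by a positive multiple of $N_k^{-5}\sim\int_{J_k}N(t)^{-3}\,dt$; summing over $J_k\subset I$ via Lemma~\ref{lem:lc} and \eqref{E:Ng1} yields the desired lower bound $\gtrsim_u\int_I N(t)^{-3}\,dt$.

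For the upper bounds, the boundary contribution $|M(t)|$ is controlled by $\|\nabla a\|_{L_x^\infty}$ times a ball-localized $L^2$-mass of $u$ and a half-derivative norm; since we have no global mass control, the ball-localized mass is supplied by Proposition~\ref{L:mass}, at the price of a logarithm $\log(\lambda(t)n(t))$ with $n\sim N$. The annular error terms carry a saving factor $J^{-1}\sim(\log R)^{-1}$, which precisely cancels this logarithm. The remaining smallness needed to absorb the errors into the main term is produced by splitting $u=\ulo+\uhi$ as in \eqref{hivslo}: the $\ulo$ contribution is small in $L_t^\infty L_x^2$ by \eqref{APsmall} (after matching the threshold with $c(\eta)N(t)$), while the $\uhi$ contribution, which carries at most one derivative of $u$ in any error term, is controlled by combining Bernstein with the $\dot H^{3/2}$ bound \eqref{E:bdd} and the long-time Strichartz bounds of Corollary~\ref{C:lts} --- this is where the energy-supercriticality of \eqref{nls} is essential, since the error terms involve at most one derivative of $u$ whereas we control $|\nabla|^{3/2}u$, leaving a half-derivative of room for Bernstein. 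The nonlinear errors generated by integration by parts against $|u|^4u$ are organized via the decomposition $|u|^4u=F+G$ of Corollary~\ref{C:lts} and estimated through \eqref{E:ulo}--\eqref{E:non}, H\"older, and Proposition~\ref{L:mass}; the $\lambda'/\lambda$ term is absorbed using the smoothing algorithm. Collecting everything yields, schematically,
\begin{equation*}
\int_I N(t)^{-3}\,dt \;\lesssim_u\; R \;+\; \Bigl[\tfrac{1}{\log R}+\eta\Bigr]\int_I N(t)^{-3}\,dt,
\end{equation*}
with $\eta$ adjustable via \eqref{APsmall}. Choosing $\eta$ small and then $R$ large absorbs the second right-hand term into the left; letting $I$ exhaust $[0,T_{max})$ and invoking \eqref{E:KI}, the left side diverges while the right stays bounded, a contradiction.

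The main obstacle, in my view, is the coupling between the time-dependent rescaling $\lambda(t)$ and the error bookkeeping. The smoothing algorithm must produce a $\lambda$ close enough to $N(t)^{-1}$ to preserve the diagonal lower bound, yet regular enough that $\lambda'/\lambda$ is harmless, and compatible with the characteristic-interval decomposition of Lemma~\ref{lem:lc}. Simultaneously, every nonlinear error term must land in a norm in which the combination of the annular gain $J^{-1}$, the $\uhi$ Bernstein gain, and the $\ulo$ almost-periodicity gain yields a coefficient strictly smaller than the logarithm coming from Proposition~\ref{L:mass}, so that the log is genuinely canceled rather than merely rearranged.
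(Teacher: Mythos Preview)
Your architecture matches the paper's, but there is a genuine gap in the choice of rescaling function. You take the input to the smoothing algorithm to be $N(t)^{-1}$; the paper instead feeds in $n_0(t):=\|\uhi(t)\|_{L_x^4}^{-2}$, and this is essential rather than cosmetic. The high-frequency part of the $\partial_t a$ error (your ``$\lambda'/\lambda$ term'') is controlled by
\[
\tfrac{(Re^J)^4}{J}\int_I \|\uhi(t)\|_{L_x^4}^2\,\frac{|n'(t)|}{n(t)^5}\,dt,
\]
cf.\ \eqref{whym}. With the paper's choice, $\|\uhi\|_{L_x^4}^2=n_0^{-1}$, and the algorithm has the crucial property \eqref{E:nm04} that $n_m=n_1\sim n_0$ wherever $n_m'\neq 0$; the integrand thus becomes $|n_m'|n_m^{-6}$, and iteration drives $\int_I|n_m'|n_m^{-6}\,dt$ down to $2^{-5m}K+O(1)$, which the parameter choice $2^m\sim R^{4(1+\alpha)/5}$ absorbs. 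If you base the algorithm on $N(t)$ instead, the only available bound is $\|\uhi\|_{L_x^4}^2\lesssim_u 1$, leaving $\int_I|n_m'|n_m^{-5}\,dt$; the baseline for that quantity is $\int_I N(t)^{-2}\,dt\sim\sum_k N_k^{-4}$, which for $N_k\gg 1$ may vastly exceed $K\sim\sum_k N_k^{-5}$, and no choice of $m$ compatible with the radius condition \eqref{radius} can then absorb the prefactor. The paper flags this obstruction explicitly and devotes Lemma~\ref{L:n0} to verifying that $n_0$ has the needed properties, notably $\int_I n_0(t)^{-3}\,dt\lesssim_u K$.

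Two smaller inaccuracies. Proposition~\ref{L:mass} is a time-integrated estimate and cannot bound $\sup_t|M(t)|$; the paper handles the boundary term pointwise via Proposition~\ref{P:decay} ($u\in L_t^\infty L_x^4$), giving $|M(t)|\lesssim_u(Re^J/n(t))^4$. And since the Morawetz action is run on the true solution $u$ rather than on $\uhi$, there are no nonlinear commutator errors from $\Im(\bar u\,\nabla|u|^4u)$: the nonlinear contribution is the potential term \eqref{E:potential}, which has a good sign in the core region and is an $\eta K$ error only in the annulus where $a_{rr}$ may be negative. Proposition~\ref{L:mass} enters (through Corollary~\ref{C:mass}) only for the annular errors in Lemmas~\ref{L:scary}--\ref{L:massmass}, not for the boundary term.
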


Throughout this section, we suppose $u:[0,T_{max})\times\R^4\to\C$ is an almost periodic solution as in Theorem~\ref{T:AP} such that \eqref{E:KI} holds. We let $I\subset[0,T_{max})$ be a compact time interval, which is a contiguous union of characteristic subintervals, and we denote
\begin{equation}\label{E:QSK}
K:=\int_I N(t)^{-3}\,dt.
\end{equation}
By \eqref{E:KI}, we can make $K$ arbitrarily large by choosing $I$ sufficiently large inside $[0,T_{max})$. Our goal is to prove an interaction Morawetz inequality for $u$ on $I\times\R^4$ that we can use to contradict \eqref{E:KI}. Specifically, we will prove that for $I\subset[0,T_{max})$ sufficiently large and any $\eta>0$, we have $K\lesssim_u \eta K$. Choosing $\eta=\eta(u)$ sufficiently small will then yield the contradiction $K=0$.

As in \eqref{hivslo}, we define
\begin{equation}\label{def:hi}
\PHi=P_{>K^{-\frac15}},\quad \PLo=P_{\leq K^{-\frac15}},\quad \uhi=\PHi u,\quad \ulo=\PLo u.
\end{equation}


\subsection{Setup}\label{S:setup} Given a weight $a:I\times\R^4\to\R$ and $u$ as above, we define the interaction Morawetz action by
\begin{equation}\label{E:M}
M(a;t)=\iint_{\R^4\times\R^4}|u(t,y)|^2 a_k(t,x-y) 2\Im[\bar{u}(t,x)u_k(t,x)]\,dx\,dy,
\end{equation}
where subscripts denote spatial partial derivatives and repeated indices are summed. 

A standard computation yields the following identity:
\begin{proposition}[Interaction Morawetz identity] \label{P:identity}
\begin{align}
\tfrac{d}{dt}& M(a;t) \nonumber \\
&=\iint |u(y)|^2 \partial_t a_k(x-y)2\Im(\bar{u}u_k)(x)\,dx\,dy \label{E:dta} \\
&+ \iint 4a_{jk}(x-y)\bigl[|u(y)|^2\Re(\bar{u}_ku_j)(x)-\Im(\bar{u}u_j)(y)\Im(\bar{u}u_k)(x)\bigr]\,dx\,dy \label{E:scary}\\ 
&+ \iint |u(y)|^2 \tfrac{4}{3}\Delta a(x-y) |u(x)|^6\,dx\,dy \label{E:potential} \\
&+\iint |u(y)|^2 (-\Delta\Delta a)(x-y) |u(x)|^2\,dx\,dy, \label{E:massmass}
\end{align}
where here and below we suppress the dependence of functions on $t$. 
\end{proposition}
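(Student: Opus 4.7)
The plan is to differentiate the defining integral \eqref{E:M} under the integral sign, distributing $\partial_t$ across its three time-dependent factors: the weight $a_k(t,x-y)$, the mass density $|u(y)|^2$, and the momentum density $p_k(x):=2\Im(\bar u u_k)(x)$. The first immediately produces \eqref{E:dta}. For the other two I would first record the standard local conservation laws satisfied by any solution to \eqref{nls}:
\begin{align*}
\partial_t |u|^2 &= -\partial_j p_j, \\
\partial_t p_k &= -4\,\partial_j \Re(\bar u_j u_k) + \partial_k \Delta |u|^2 - \tfrac{4}{3}\partial_k |u|^6,
\end{align*}
both of which follow from a direct computation using $i u_t = -\Delta u + |u|^4 u$ and its conjugate; the coefficient $\tfrac{4}{3}=\tfrac{2\sigma}{\sigma+1}\big|_{\sigma=2}$ reflects the quintic nonlinearity.

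For the contribution of $\partial_t|u(y)|^2$, I substitute the mass conservation law and integrate by parts in $y$, using $\partial_{y_j} a_k(x-y) = -a_{jk}(x-y)$. This produces
$$
-\iint a_{jk}(x-y)\,\bigl[2\Im(\bar u u_j)(y)\bigr]\,\bigl[2\Im(\bar u u_k)(x)\bigr]\,dx\,dy,
$$
which will account for the $\Im\cdot\Im$ portion of \eqref{E:scary}.

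For the contribution of $\partial_t p_k(x)$, I substitute the momentum conservation law and integrate by parts in $x$, handling its three pieces separately. The stress-tensor piece, after one integration by parts in $x_j$, yields $\iint 4 a_{jk}(x-y)\,|u(y)|^2 \Re(\bar u_k u_j)(x)\,dx\,dy$, which combines with the previous display to form \eqref{E:scary} (using the symmetry $a_{jk} = a_{kj}$). The potential piece $-\tfrac{4}{3}\partial_k|u|^6$ integrates by parts once against $\partial_{x_k} a_k = \Delta a$ to yield \eqref{E:potential}. The remaining piece $\partial_k \Delta |u|^2$ requires three integrations by parts in $x$: one converts $\partial_k$ acting on $a_k$ into $\Delta a$, and the remaining two transfer $\Delta$ from $|u(x)|^2$ onto $\Delta a$, producing the $-\Delta\Delta a$ kernel of \eqref{E:massmass}.

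The only real obstacle here is bookkeeping: one must track signs and tensor indices carefully through several integrations by parts so that the mass-flux contribution and the stress-tensor contribution assemble cleanly into the single bracketed expression in \eqref{E:scary}. This is a well-known calculation, essentially identical in every dimension apart from the constant in front of the potential term, so I would be adapting the standard argument (cf.\ \cite{ckstt, RV}) to the quintic setting at hand.
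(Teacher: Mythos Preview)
Your proposal is correct and is exactly the standard computation the paper has in mind; in fact the paper provides no proof at all, merely stating that ``a standard computation yields the following identity'' and citing the same circle of references (\cite{ckstt, RV}) you invoke. Your outline of the local mass/momentum conservation laws, the integrations by parts, and the assembly of the $\Im\cdot\Im$ and stress-tensor pieces into \eqref{E:scary} is precisely how this identity is always derived.
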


For the standard interaction Morawetz estimate, introduced originally in \cite{ckstt}, one takes $a(x)=|x|$. By proving lower bounds for $\frac{d}{dt}M$ and using the fundamental theorem of calculus, one can deduce (in dimensions four and higher) 
$$
\iiint \frac{|u(x)|^2 |u(y)|^2}{|x-y|^3}\,dx\,dy\,dt\lesssim \sup_t |M(a;t)|\lesssim \|u\|_{L_t^\infty L_x^2}^3 \|\nabla u\|_{L_t^\infty L_x^2}.
$$
This estimate is not directly applicable in our setting, since we do not control the $H_x^1$-norm of $u$. In order to make $M(a;t)$ finite, we choose our weight to be of the form
\begin{equation}\label{D:a}
a(t,x)=\tfrac{1}{n(t)} w(n(t)|x|),
\end{equation}
where $w$ is a truncation of $|x|$. The need to rescale $w$ by a function of $t$ stems from the logarithmic failure in Proposition~\ref{L:mass}. The most natural choice would be to rescale by the frequency scale function $N(t)$; however, \eqref{E:dta} would then involve $N'(t)$, over which we have no control.  Instead, we follow the approach of \cite{Dodson, Dodson14}, choosing $n(t)$ to be the output of a `smoothing algorithm' whose input is a function closely related to $N(t)$. 

The construction of the weight $w$ is motivated by \cite{KV3D}. We need a few parameters, which we later choose in terms of $K$. We let $R\gg 1$ and $J\sim\log R\gg 1$. We let $w$ be a smooth radial function, which we regard either as a function of $x$ or $r=|x|$, that satisfies the following: 
\begin{equation}\label{D:W}
w(0)=0, \quad w_r\geq 0,\quad 
w_r=\begin{cases} 1 & r\leq R \\
1-\tfrac{1}{J}\log(\tfrac{r}{R}) & Re<r\leq Re^{J-1} \\
0 & r> Re^J.
\end{cases}
\end{equation}
Thus $w=|x|$ for $|x|\leq R$ and $w$ is constant for $|x|>Re^J$. We fill in the regions where $w_r$ is not yet defined so that
\begin{equation}\label{E:higher}
|\partial_r^k w_r|\lesssim_k \tfrac{1}{J} r^{-k}
\end{equation}
for all $k\geq 1$, uniformly in all parameters.

As $\nabla a$ has compact support and $u\in L_t^\infty L_x^4$ (cf. Proposition~\ref{P:decay}), we have the following bound by Young's inequality and Sobolev embedding:
\begin{equation}\label{E:boundM}
|M(a;t)|\lesssim \|u(t)\|_{L_x^4}^2 \|\nabla a(t)\|_{L_x^1}\|u(t)\|_{L_x^8}\|\nabla u(t)\|_{L_x^{\frac83}}\lesssim_u
	 \bigl(\tfrac{Re^J}{n(t)}\bigr)^4.
\end{equation}

\subsection{Construction of $n(t)$}\label{S:nt} The construction of the function $n(t)$ is motivated by the work of Dodson \cite{Dodson, Dodson14}. We proceed inductively. To get started, we define 
\begin{equation}\label{D:n}
n_0(t):=\|\uhi(t)\|_{L_x^4}^{-2},
\end{equation}
where $u$ is as above and $\uhi$ is as in \eqref{def:hi}. Recall from Proposition~\ref{P:decay} that $u\in L_t^\infty L_x^4$. The motivation for this choice of $n_0$ stems from the estimation of \eqref{E:dta}, cf. \eqref{whym} below. 

We collect the key properties of $n_0$ in the following lemma. 
\begin{lemma}[Properties of $n_0$]\label{L:n0} For $I$ sufficiently large inside $[0,T_{max})$, 
\begin{align}\label{E:n01}
&1\lesssim_u n_0(t) \lesssim_u N(t)\quad\text{uniformly for }t\in I, \\
\label{E:n03}
&|n_0'(t)|\lesssim_u n_0(t)^3\quad\text{uniformly for}\quad t\in I, \\
\label{E:n02}
&\int_I n_0(t)^{-3}\,dt \lesssim_u K.
\end{align}

\end{lemma}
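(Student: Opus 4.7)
My plan is to read off all three bounds from the almost periodicity of $u$, exploiting the observation that once $I$ is large the cutoff frequency $K^{-1/5}$ becomes asymptotically harmless: since $N(t)\geq 1$, $K^{-1/5}/N(t)\leq K^{-1/5}\to 0$ uniformly in $t$. The key reduction is to establish
$$
\|\uhi(t)\|_{L_x^4}\sim_u N(t)^{-1/2}\quad\text{uniformly in }t\in I,
$$
which immediately gives \eqref{E:n01} (the lower bound $n_0\gtrsim_u 1$ also follows from the trivial estimate $\|\uhi\|_{L^4}\leq \|u\|_{L^4}\lesssim_u 1$ coming from Proposition~\ref{P:decay}) and \eqref{E:n02} (since then $n_0^{-3}\sim_u N(t)^{-3}$).

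The display above is the scaling-invariant statement that the orbit of $u$ is bounded above and below in $L_x^4$. Writing $u(t,x)=N(t)^{1/2}f_t(N(t)(x+x_0(t)))$ with $\{f_t\}$ precompact in $\dot H_x^{3/2}$ gives $\|u(t)\|_{L^4}=N(t)^{-1/2}\|f_t\|_{L^4}$ and analogously $\|\uhi(t)\|_{L^4}=N(t)^{-1/2}\|P_{>K^{-1/5}/N(t)}f_t\|_{L^4}$. The two-sided bound $\|f_t\|_{L^4}\sim_u 1$ combines compactness of $\{f_t\}$ in $\dot H_x^{3/2}$, equicontinuity of low-frequency Littlewood--Paley projections on this compact family, the scale-invariance $\|u\|_{L^8}\sim_u 1$ (coming from $\dot H_x^{3/2}\hookrightarrow L^8$ and $0\notin\{f_t\}$), and the extra decay of Proposition~\ref{P:decay} to upgrade compactness past the non-embedded exponent. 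Once these are in place, a low-frequency Bernstein estimate gives $\|\ulo\|_{L^4}\lesssim_u K^{-\varepsilon}$ (using $u\in L^q$ for $q$ slightly above $40/11$), so that the high-frequency part inherits both bounds from $u$.

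For \eqref{E:n03}, I would differentiate using $(i\partial_t+\Delta)\uhi=\PHi(|u|^4 u)$, which yields
$$
\tfrac{d}{dt}\int|\uhi|^4\,dx=-4\int|\uhi|^2\Im(\bar\uhi\Delta\uhi)\,dx+4\int|\uhi|^2\Im(\bar\uhi\PHi(|u|^4u))\,dx.
$$
One integration by parts bounds the first integrand pointwise by $|\uhi|^2|\nabla\uhi|^2$, which by H\"older and Sobolev is controlled by $\|\uhi\|_{L^8}^2\|\nabla\uhi\|_{L^{8/3}}^2\lesssim_u 1$ (using $\dot H_x^{3/2}(\R^4)\hookrightarrow L^8$ and $\|\nabla\uhi\|_{L^{8/3}}\lesssim\|\nsc\uhi\|_{L^2}$). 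The nonlinear term is bounded via H\"older by $\|\uhi\|_{L^8}^3\|\PHi(|u|^4 u)\|_{L^{8/5}}\lesssim\|\uhi\|_{L^8}^3\|u\|_{L^8}^5\lesssim_u 1$. Hence $t\mapsto\|\uhi(t)\|_{L^4}^4$ is Lipschitz with constant $\lesssim_u 1$, and since Step~1 gives $\|\uhi\|_{L^4}$ bounded away from zero, the chain rule applied to $n_0=\|\uhi\|_{L^4}^{-2}$ yields $|n_0'(t)|\lesssim_u\|\uhi\|_{L^4}^{-6}=n_0(t)^3$.

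The main obstacle I anticipate is the sharp two-sided bound $\|u(t)\|_{L_x^4}\sim_u N(t)^{-1/2}$: since $\dot H_x^{3/2}(\R^4)\not\hookrightarrow L^4$, compactness of the orbit in $\dot H_x^{3/2}$ alone does not directly yield the required uniform $L_x^4$ control on the rescaled profiles, and closing the argument requires carefully marrying the almost-periodic spatial and frequency tail estimates with the additional decay from Proposition~\ref{P:decay}.
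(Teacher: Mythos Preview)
Your treatment of \eqref{E:n01} and \eqref{E:n03} is fine and matches the paper's argument. The gap is in \eqref{E:n02}.

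Your route to \eqref{E:n02} is the pointwise two-sided bound $\|\uhi(t)\|_{L_x^4}\sim_u N(t)^{-1/2}$, from which $n_0(t)^{-3}\sim_u N(t)^{-3}$ would make \eqref{E:n02} immediate. The direction you need for \eqref{E:n02} is the \emph{upper} bound $\|\uhi(t)\|_{L_x^4}\lesssim_u N(t)^{-1/2}$, equivalently $n_0(t)\gtrsim_u N(t)$, and this is not contained in \eqref{E:n01} (which only asserts $n_0\lesssim_u N(t)$). Your sketch for it---compactness in $\dot H_x^{3/2}$ plus Proposition~\ref{P:decay}---does not close. Compact subsets of $\dot H_x^{3/2}(\R^4)$ need not be bounded in $L_x^4$: for instance, with a fixed Schwartz $\psi$ and $f_n(x)=n^{-3/4}\psi(x/n)$ one has $\|f_n\|_{\dot H_x^{3/2}}\sim n^{-1/4}\to 0$ while $\|f_n\|_{L_x^4}\sim n^{1/4}\to\infty$; adding a fixed profile keeps $0$ out of the closure without curing the blowup. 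Low-frequency tightness in $\dot H_x^{3/2}$ (the content of \eqref{APsmall}) controls $\|P_{\leq cN(t)}u\|_{L_x^8}$, not $\|P_{\leq cN(t)}u\|_{L_x^4}$, and Proposition~\ref{P:decay} only yields $\|u(t)\|_{L_x^q}\lesssim_u 1$, which in the rescaled variable reads $\|f_t\|_{L_x^q}\lesssim_u N(t)^{4/q-1/2}$---a bound that \emph{grows} with $N(t)$ for every $q<8$. Interpolating these ingredients produces at best $\|P_{\leq cN(t)}u\|_{L_x^4}\lesssim_u \eta^{\theta}$ for some $\theta>0$, which is small but carries no $N(t)^{-1/2}$ decay. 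So the pointwise bound $n_0\gtrsim_u N(t)$ is not available from these tools, and your argument for \eqref{E:n02} does not go through.

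The paper avoids this entirely: it never claims $n_0\sim_u N$ pointwise. Instead it bounds the time integral $\int_I\|\uhi(t)\|_{L_x^4}^6\,dt$ directly by expanding in Littlewood--Paley pieces, pulling out two factors in $L_x^\infty$ via the maximal long-time Strichartz estimate \eqref{E:uhi1} of Corollary~\ref{C:lts} (which puts the $L_t^2$-integrability to work), and summing the remaining factors in $\dot H_x^{3/2}$ via Cauchy--Schwarz. This is the missing idea: the bound \eqref{E:n02} is a genuinely time-integrated statement that leans on Proposition~\ref{prop:lts}, not a consequence of a pointwise comparison between $n_0$ and $N$.
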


\begin{proof} As Proposition~\ref{P:decay} gives $u\in L_t^\infty L_x^4$, the first inequality in \eqref{E:n01} holds. For the second inequality, it suffices to show
\begin{equation}\label{E:n011}
\inf_{t\in I} N(t)^2 \|\uhi(t)\|_{L_x^4}^4\gtrsim_u 1.
\end{equation}
To this end, first note that as $u\not\equiv 0$, we may use almost periodicity, \eqref{E:Ng1}, and Sobolev embedding to deduce
\begin{equation}\nonumber
\inf_{t\in I}\|u(t)\|_{L_x^8}\gtrsim \inf_{t\in I} \|P_{\leq CN(t)}\uhi(t)\|_{L_x^8}\gtrsim_u 1
\end{equation}
for $C$ and $K$ sufficiently large depending on $u$.  In particular, recalling \eqref{E:KI}, this lower bound holds for $I$ sufficiently large inside $[0,T_{max})$. As Bernstein implies
$$
\|P_{\leq CN(t)}\uhi(t)\|_{L_x^8}^8\lesssim \|u_{\leq CN(t)}\|_{L_x^\infty}^4\|\uhi(t)\|_{L_x^4}^4
	\lesssim_u N(t)^{2}\|u(t)\|_{L_x^8}^4\|\uhi(t)\|_{L_x^4}^4,
$$
we deduce \eqref{E:n011}.

Next, using \eqref{nls}, integrating by parts, and using Sobolev embedding and \eqref{E:bdd}, we can estimate
$$
\bigl|\tfrac{d}{dt}\|\uhi(t)\|_{L_x^4}^4\bigr| \leq \|\nabla\uhi(t)\|_{L_x^{\frac83}}^2\|\uhi(t)\|_{L_x^8}^2+\|u(t)\|_{L_x^8}^8\lesssim_u 1,
$$
from which \eqref{E:n03} follows.

Finally, for \eqref{E:n02}, we define
$$
S=\{N_1,\dots, N_4\,|\, K^{-\frac15}\leq N_1\dots\leq N_4\}
$$
and use Bernstein, Cauchy--Schwarz, and Corollary~\ref{C:lts} to estimate
\begin{align*}
\int_I& n_0(t)^{-3}\,dt \\
&\lesssim \int_I \biggl[ \sum_S \|u_{N_1}\|_{L_x^\infty} \|u_{N_2}\|_{L_x^\infty} \|u_{N_3}\|_{L_x^2}
	\|u_{N_4}\|_{L_x^2}\biggr]^{\frac32}\,dt \\ 
&\lesssim \bigl\|\sup_M M^{-2}\|P_M\uhi\|_{L_x^\infty}\bigr\|_{L_t^2}^2 \|u\|_{L_t^\infty\dot H_x^{\frac32}} \biggl\| \sum_S 
	\bigl(\tfrac{N_1N_2}{N_3N_4}\bigr)^{\frac32} \|u_{N_3}\|_{\dot H_x^{\frac32}} \|u_{N_4}\|_{\dot H_x^{\frac32}}\biggr\|_{L_t^\infty}^{\frac32} \\
&\lesssim_u \|\nsc u\|_{L_t^\infty L_x^{2}}^4 K\lesssim_u K.
\end{align*}
This completes the proof of Lemma~\ref{L:n0}. \end{proof}

From $n_0$ we now construct a closely related function $n_1$, which is piecewise-linear and hence simpler to work with. First, using \eqref{E:n03}, we take $\delta=\delta(u)$ sufficiently small so that that 
$$
\tfrac12 n_0(t)\leq n_0(t')\leq 2n_0(t)\quad\text{whenever}\quad |t-t'|\leq \delta n_0(t)^{-2}.
$$
We now partition $I$ into intervals $J_\ell=[t_\ell,t_{\ell+1}]\cap I$, where $t_0=\inf I$ and $t_{\ell+1}=t_{\ell}+\delta n_0(t_\ell)^{-2}$. For each $J_\ell$, define
$$
k_\ell=\sup\{k\in\Z:\inf_{t\in J_\ell} n_0(t)\geq 2^k\}.
$$
By construction, the value of $k_\ell$ can change by at most 1 on adjacent intervals: 
$$
\frac{2^{k_\ell}}{2^{k_{\ell+1}}}\in\{\tfrac12,1,2\}.
$$

We now define $n_1(t_\ell)=2^{k_\ell}$, and we take $n_1$ to be the linear interpolation between the $t_\ell$. Between the final $t_\ell$ and $\sup I$ we take $n_1$ to be constant. By construction and Lemma~\ref{L:n0}, $n_1$ has the following properties.
\begin{lemma}[Properties of $n_1$] For $I$ sufficiently large inside $[0,T_{max})$, 
\begin{align}
\label{n101}
&n_1(t)\sim n_0(t)\quad\text{uniformly for}\quad t\in I, \\
\label{n102}
&|n_1'(t)|\lesssim_u n_1(t)^3\quad\text{uniformly for}\quad t\in I, \\
\label{n103}
&\int_I \frac{|n_1'(t)|}{n_1(t)^6}\,dt\lesssim_u K.
\end{align}
\end{lemma}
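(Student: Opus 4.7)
The plan is to deduce all three properties of $n_1$ directly from the explicit piecewise-linear construction, combining the choice of $\delta$ (which controls the local variation of $n_0$) with the conclusions of Lemma~\ref{L:n0}. No new analytic input from the equation is needed; this is essentially a bookkeeping argument on the partition $\{J_\ell\}$.

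For \eqref{n101}, the key observation is that on each $J_\ell$ we have $n_0(t)\sim n_0(t_\ell)$ by the choice of $\delta$, while the definition of $k_\ell$ forces $2^{k_\ell}\leq \inf_{t\in J_\ell} n_0(t)\leq n_0(t)\leq 2\cdot\inf_{t\in J_\ell}n_0(t)<2^{k_\ell+2}$. Since the dyadic values $n_1(t_\ell)=2^{k_\ell}$ and $n_1(t_{\ell+1})=2^{k_{\ell+1}}$ at adjacent breakpoints differ by at most a factor of $2$, linear interpolation gives $n_1(t)\sim n_1(t_\ell)=2^{k_\ell}\sim n_0(t)$ for all $t\in J_\ell$, yielding \eqref{n101}.

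For \eqref{n102}, I would compute the slope on each $J_\ell$ directly: since $|2^{k_{\ell+1}}-2^{k_\ell}|\leq 2^{k_\ell}$ and $t_{\ell+1}-t_\ell=\delta n_0(t_\ell)^{-2}$, we get $|n_1'(t)|\leq \delta^{-1}\,2^{k_\ell}\,n_0(t_\ell)^2\lesssim_u n_1(t_\ell)^3\sim n_1(t)^3$, where the last two equivalences use \eqref{n101}. On the trailing interval past the last $t_\ell$ the function $n_1$ is constant and $n_1'=0$, so the bound is trivial there. Property \eqref{n103} then follows by combining the pointwise estimate just proved with \eqref{n101} to get $|n_1'(t)|/n_1(t)^6\lesssim_u n_1(t)^{-3}\sim n_0(t)^{-3}$, and integrating over $I$ using \eqref{E:n02}.

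The argument is essentially mechanical and there is no serious analytic obstacle; the hardest ingredient is \eqref{E:n02}, which has already been established in Lemma~\ref{L:n0} via the long-time Strichartz estimate of Corollary~\ref{C:lts}. The only mild subtlety is that $n_1$ is not differentiable at the breakpoints $t_\ell$, but since $n_1$ is continuous and piecewise linear the estimates \eqref{n102}--\eqref{n103} are understood on each open subinterval, which suffices for the later Morawetz arguments where $n_1$ and its a.e.\ derivative appear only under time integrals.
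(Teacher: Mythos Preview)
Your argument is correct and matches the paper's approach: the paper simply states that the lemma follows ``by construction and Lemma~\ref{L:n0}'' without further detail, and your proposal fills in exactly those routine verifications. The only minor point is that in \eqref{n101} one gets $n_0(t)<2^{k_\ell+3}$ rather than $2^{k_\ell+2}$ (since $\sup_{J_\ell}n_0\leq 4\inf_{J_\ell}n_0$), but this is harmless for the $\sim$ relation.
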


We consider the quantity in \eqref{n103} because it shows up when we estimate the error term \eqref{E:dta}; cf. \eqref{whym} below. We now describe an algorithm as in the work of Dodson \cite{Dodson, Dodson14}, which takes $n_1$ as input and generates a sequence of functions $n_m$. The algorithm increases the pointwise value of $n_m$, but decreases the quantity in \eqref{n103}. We discuss this tradeoff in more detail below; cf. \eqref{radius}. 

\begin{definition}[Smoothing algorithm, \cite{Dodson, Dodson14}] Call $t_\ell$ a \emph{low point} if there exist $m_1,m_2\geq 1$ such that
\begin{itemize}
\item $n_1(t_\ell)=n_1(t_{\ell-m})$ for $0\leq m<m_1$, and $n_1(t_\ell)<n_1(t_{\ell-m_1})$,
\item $n_1(t_\ell)=n_1(t_{\ell+m})$ for $0\leq m<m_2$, and $n_1(t_\ell)<n_1(t_{\ell+m_2}).$
\end{itemize} We can define \emph{high points} analogously.

If $t_k$ and $t_m$ are not themselves low points, but $t_\ell$ is a low point for all $k<\ell<m$, then we call $[t_{k+1},t_{m-1}]$ a \emph{valley}. Note that a valley may consist of a single point. Also note that by construction, $n_1(t_k)=n_1(t_m)=2n_1(t)$ for $t\in[t_{k+1},t_{m-1}].$ 

Similarly, if $t_k$ and $t_m$ are not themselves high points, but $t_\ell$ is a high point for all $k<\ell<m$, then we call $[t_{k+1},t_{m-1}]$ a \emph{peak}.

Note that peaks and valleys must alternate. If an interval $J$ joins a peak to a valley (or vice versa), we call $J$ a \emph{slope}. Note that $n_1$ is monotone on slopes.

We construct $n_2$ from $n_1$ by `filling in the valleys'. That is, if $[t_{k+1},t_{m-1}]$ is a valley, we set $n_2(t)=n_1(t_k)$ for $t\in(t_k,t_m)$. For all other points, we set $n_2(t)=n_1(t)$. We can similarly construct $n_3(t)$ from $n_2(t)$, and so on. This generates a sequence of functions $n_m(t)$. 
\end{definition}

The functions $n_m$ generated by the algorithm have the following properties. 

\begin{lemma}[Properties of $n_m$] For $I$ sufficiently large inside $[0,T_{max})$, 
\begin{align}\label{E:nm01}
&1\lesssim_u n_0(t)\lesssim_u n_m(t)\lesssim_u 2^{m} n_0(t)\quad\text{uniformly for}\quad t\in I,\\
\label{E:nm04}
&n_m'(t)=0\quad\text{or}\quad n_m(t)=n_1(t),\quad\text{for}\quad t\in I \\
\label{E:nm03}
&|n_m'(t)|\lesssim_u n_m(t)^3\quad\text{uniformly for}\quad t\in I, \\
\label{E:nm05}
&\int_I n_m^{-3}(t)\,dt\lesssim_u K, \\ 
\label{E:nm02}
&\int_I \frac{|n_m'(t)|}{n_m(t)^6}\,dt\lesssim_u 2^{-5m}K+1.
\end{align}
\end{lemma}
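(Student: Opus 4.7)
The plan is to prove all five properties simultaneously by induction on $m$, with the base case $m=1$ being the preceding lemma on $n_1$. For the inductive step, the crucial pointwise comparison is $n_m \le n_{m+1} \le 2\,n_m$: outside the newly filled valleys we have $n_{m+1}=n_m$, while on a valley of $n_m$ at level $V_m$ we set $n_{m+1}(t) = n_m(t_k)$ for $t_k$ the adjacent non-low-point, and the factor-of-$2$ adjacency rule along the partition $\{t_\ell\}$---inherited from $n_1$ by induction, since filling produces continuous extensions---forces $n_m(t_k) = 2V_m$. Iterating this comparison and using $n_1\sim_u n_0$ yields \eqref{E:nm01}. Property \eqref{E:nm04} follows because on each filled valley $n_{m+1}$ is constant, so $n_{m+1}'(t)\neq 0$ forces $t$ to lie outside any fill region, where $n_{m+1} = n_m$ and $n_{m+1}' = n_m' \neq 0$; the inductive hypothesis then gives $n_{m+1}(t) = n_1(t)$. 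Property \eqref{E:nm03} is then an immediate consequence of \eqref{E:nm04} and \eqref{n102}, while \eqref{E:nm05} follows from the lower bound in \eqref{E:nm01} together with \eqref{E:n02}.

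The delicate point is \eqref{E:nm02}. Since $n_m$ is continuous, piecewise linear, and equals $n_1$ on $\{n_m'\neq 0\}$, this set decomposes into maximal monotone pieces (slopes) of $n_m$, and on each slope the change of variables $u = n_m$ gives a contribution $\sim V_s^{-5}$, where $V_s$ is the low endpoint of the slope. The structural claim---extracted by directly tracing the algorithm---is that each fill operation raises the level of every surviving valley by exactly a factor of $2$: if an original valley of $n_1$ at level $V_1 = 2^\beta$ is bordered by a peak at level $2^\alpha$, then for $0\le k<\alpha-\beta$ it persists in $n_{k+1}$ at level $2^{\beta+k}$, and at $k=\alpha-\beta$ it is absorbed into the surrounding peak. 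Hence every \emph{interior} slope of $n_m$ has $V_s = 2^{m-1} V_1$ for some original valley level $V_1$ of $n_1$, yielding
\begin{equation*}
\sum_{\text{interior slopes of }n_m} V_s^{-5} = 2^{-5(m-1)} \sum_{\text{interior}} V_1^{-5} \lesssim 2^{-5(m-1)} \int_I \frac{|n_1'(t)|}{n_1(t)^6}\,dt \lesssim_u 2^{-5m}K,
\end{equation*}
where the last inequality uses \eqref{n103}. The finitely many boundary slopes (adjacent to $\inf I$ or $\sup I$, which cannot be filled for lack of a non-low point on one side) contribute $\lesssim 1$ in total, since $n_0\gtrsim_u 1$; this accounts for the additive constant in the bound.

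The main obstacle is the bookkeeping for \eqref{E:nm02}: one must carefully verify that the factor-of-$2$ adjacency rule persists under iteration, and that the notions of low point and valley transform cleanly, so that surviving valley levels grow geometrically through each fill operation. Everything else amounts to a direct continuation of the arguments already used for $n_0$ and $n_1$.
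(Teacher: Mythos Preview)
Your proposal is correct and follows essentially the same approach as the paper. Both arguments derive \eqref{E:nm01}--\eqref{E:nm05} from the construction and the $n_1$ estimates, and for \eqref{E:nm02} both use the fundamental theorem of calculus on slopes together with the key structural fact that each application of the algorithm doubles every surviving valley level while not increasing the number of valleys; the paper phrases this as a step-by-step inequality between the $n_{m-1}$- and $n_m$-integrals, while you trace each surviving valley back to $n_1$, but these are equivalent bookkeepings of the same geometric decay.
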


\begin{proof} Properties \eqref{E:nm01} and \eqref{E:nm04} follow by construction and \eqref{n101}. Similarly, as the algorithm decreases $|n_m'(t)|$ and increases $n_m(t)$ (as $m$ increases), property \eqref{E:nm03} follows from \eqref{n102}. Property \eqref{E:nm05} follows from \eqref{E:nm01} and \eqref{E:n02}. It remains to verify \eqref{E:nm02}.

As $n_m'=0$ on peaks and valleys, to compute the integral in \eqref{E:nm02} it suffices to sum over slopes, on which $n_m$ is monotone. By the fundamental theorem of calculus, on any slope $J$ we have
$$
\int_J \frac{|n_m'(t)|}{n_m(t)^6}\,dt=\tfrac15[v^{-5}-p^{-5}],
$$
where $v$ is the value of $n_m$ on the valley and $p$ is the value of $n_m$ on the peak. As the construction of $n_m$ from $n_{m-1}$ (i) decreases the total number of valleys (in the non-strict sense)  and (ii) doubles the value on each valley, it follows that
$$
\bigcup_{\text{slopes }J}\int_{J} \frac{|n_m'(t)|}{n_m(t)^6}\,dt\leq 2^{-5}\bigcup_{\text{slopes }J}\int_{J} \frac{|n_{m-1}'(t)|}{n_{m-1}(t)^6}\,dt.
$$
Thus \eqref{E:nm02} follows from induction and \eqref{n103}. (The additional $1$ on RHS\eqref{E:nm02} accounts for either end of $I$.) \end{proof}


\subsection{Choice of parameters}\label{S:parameters} We discuss here how to choose the parameters in the definition of the weight $a$. First of all, take $I$ large enough inside $[0,T_{max})$ to satisfy the hypotheses of the lemmas of Section~\ref{S:nt}. We now fix a small parameter $0<\alpha\ll 1$. In fact, $\alpha=\frac{1}{100}$ would suffice. The implicit constants below may depend on $\alpha$. 

Recalling the definition of $K$ in \eqref{E:QSK}, we choose $R$, $m$, and $J$ satisfiying 
\begin{align}
& R\sim K^\alpha, \quad e^J = R^\alpha, \quad 2^m \sim_u R^{\frac45(1+\alpha)}. \label{par1} 
\end{align}
We now define $a$ as in \eqref{D:a}. We choose $w$ as in \eqref{D:W} and \eqref{E:higher}, and we take $n(t)=n_m(t)$, with $m$ as in \eqref{par1} and $n_m$ as constructed in Section~\ref{S:nt}. 

Recall that by \eqref{E:KI}, we may make $K$ arbitrarily large by taking $I$ sufficiently large inside $[0,T_{max})$. Note also that
\begin{equation}\label{JlogR}
J\sim\log R.
\end{equation}

The choice of $m$ in \eqref{par1} is motivated by the estimation of \eqref{E:dta} in Lemma~\ref{L:dta} below; cf. \eqref{whym}. By \eqref{E:nm01}, $n_m$ may increase with $m$, while, on the other hand, $a=|x|$ only for $|x|\leq \frac{R}{n_m(t)}$. We need $a=|x|$ on a sufficiently large ball in order to get suitable lower bounds for $\int_I \frac{d}{dt}M(t;a)\,dt$. Using \eqref{par1}, \eqref{E:nm01}, and \eqref{E:n01}, we get the following lower bound:
\begin{equation}\label{radius}
\tfrac{R}{n_m(t)}\gtrsim_u \tfrac{R^{\frac15-\frac{4}{5}\alpha}}{N(t)}\gtrsim_u 
\tfrac{R^\alpha}{N(t)}\geq \tilde{c}(u) \tfrac{K^{\alpha^2}}{N(t)},
\end{equation}
provided $\alpha$ is sufficiently small. In particular, if $K$ is sufficiently large, we can guarantee $a=|x|$ on a large enough ball. By `large enough', we mean the following: for $C(u)$ sufficiently large (and $K$ sufficiently large depending on $u$), we can deduce the following lower bound: 
\begin{equation}\label{a lb}
\int_{|x-x(t)|\leq \frac{C(u)}{N(t)}}|\uhi(t,x)|^2\,dx\gtrsim_u N(t)^{-3}\quad\text{uniformly for}\quad t\in[0,T_{max}).
\end{equation}
Indeed, this is a consequence of almost periodicity and \eqref{E:Ng1} (and a few applications of H\"older, Bernstein, and Sobolev embedding). For details, refer to \cite[(7.3)]{MMZ}. With \eqref{radius} in mind, we now take $I$ possibly even larger inside $[0,T_{max})$ to guarantee that  
\begin{equation}\label{Klb}
\tilde{c}(u)K^{\alpha^2}>2C(u).
\end{equation}

The fact that we can choose $m$ both to control \eqref{whym} satisfactorily and to satisfy \eqref{radius} stems from the definition of $n_0$ in \eqref{D:n} and the property \eqref{E:nm04} of $n_m$; cf. \eqref{whym} below. 
 
Next, fix $0<\eta\ll 1$ to be chosen sufficiently small depending on $u$ below. We choose $I$ possibly even larger inside $[0,T_{max})$ so that
\begin{align}
 (\log K)^{-1}&\leq\eta.	\label{Kbig}
\end{align} 
In particular, by construction,
\begin{equation}\label{JJ0}
\tfrac{1}{J}\lesssim \eta. 
\end{equation}

We next use \eqref{APsmall} to choose $c=c(\eta)>0$ sufficiently small that
\begin{equation}\label{IMap}
\| \nsc P_{\leq cN(t)}u\|_{L_t^\infty L_x^2}\leq \eta.
\end{equation}

In what follows, we will use the following notation: 
\begin{equation}\label{E:lambda}
\lambda_j = \lambda_j(t)=\tfrac{Re^j}{n_m(t)}\quad\text{for}\quad j=0,\dots, J.
\end{equation}
Thus by construction, $a(t,x)=|x|$ for $|x|\leq\lambda_0(t)$ and $a(t,x)$ is constant for $|x|>\lambda_J(t)$. 

By \eqref{E:nm01}, \eqref{E:n01}, \eqref{par1}, and \eqref{JlogR}, we may choose $\alpha$ sufficiently small and take $I$ possibly even larger inside $[0,T_{max})$ to guarantee
\begin{equation}\label{E:Rbig}
\lambda_j \cdot cN(t)\geq \tfrac{J}{\eta}\quad\text{uniformly for}\quad j=0,\dots J\quad\text{and}\quad t\in I. 
\end{equation}

In the estimates below, we will encounter quantities of the form
$$
(Re^J)^\ell K^{\delta}\quad\text{for some}\quad \ell\geq 1\quad\text{and}\quad \delta\in(0,1).
$$
By choosing $\alpha$ sufficiently small and using \eqref{Kbig} and \eqref{par1}, we can guarantee that
\begin{equation}\label{Vsmall}
(Re^J)^\ell K^{\delta}\lesssim K^{1-\alpha}\lesssim \eta K
\end{equation}
for all combinations of $\ell,\delta$ appearing below. In particular, by \eqref{E:nm01} and \eqref{E:boundM}, we have
\begin{equation}\label{E:boundM2}
\sup_{t\in I}|M(t;a)|\lesssim_u \eta K.
\end{equation}

\subsection{Estimation of \eqref{E:dta} through \eqref{E:massmass}} Having chosen parameters, we turn to estimating the terms appearing in Proposition~\ref{P:identity}.

\begin{lemma}[Estimation of \eqref{E:dta}]\label{L:dta}
\begin{equation}\label{est:dta}
\biggl\vert\iiint |u(y)|^2 \partial_ta_k(x-y) 2\Im \bar{u}u_k(x)\,dx\,dy\,dt\biggr|\lesssim_u \eta K.
\end{equation}
\end{lemma}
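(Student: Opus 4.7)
I begin by computing $\partial_t a_k$. With $a(t,x) = n(t)^{-1}w(n(t)|x|)$, we have $a_k(t,x) = w_r(n|x|)\tfrac{x_k}{|x|}$, so
\[
\partial_t a_k(t,x) = n'(t)|x|\,w_{rr}(n(t)|x|)\tfrac{x_k}{|x|}.
\]
By \eqref{D:W} and \eqref{E:higher}, $\partial_t a_k$ is supported in the annulus $\{\lambda_0(t)\leq|x|\leq\lambda_J(t)\}$ and obeys $|\partial_t a_k(t,x-y)|\lesssim \tfrac{|n'(t)|}{J\,n(t)}\mathbf{1}_{\{\lambda_0\leq|x-y|\leq\lambda_J\}}$. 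By property \eqref{E:nm04} of the smoothing algorithm, the integrand in \eqref{est:dta} vanishes whenever $n'(t)=0$; on the remaining set $n(t)=n_1(t)\sim n_0(t)=\|\uhi(t)\|_{L_x^4}^{-2}$ and $|n'(t)|\lesssim n(t)^3$ by \eqref{E:nm03}. Since $1/J\lesssim\eta$ by \eqref{JJ0}, it suffices to show that the remaining spatial-temporal integral is $\lesssim_u K$.

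Writing $|u(y)|^2=|\uhi(y)|^2+2\Re(\uhi\bar\ulo)(y)+|\ulo(y)|^2$, the leading piece is the one with $|\uhi|^2$ in $y$. For it, Young's convolution inequality (in the form $L^1\ast L^2\to L^2$) and H\"older in $x$ yield
\[
\iint|\uhi(y)|^2\mathbf{1}_{|x-y|\leq\lambda_J}|u||\nabla u|(x)\,dx\,dy \leq \|\mathbf{1}_{B(\lambda_J)}\|_{L^1_x}\||\uhi|^2\|_{L^2_x}\|u\nabla u\|_{L^2_x}\lesssim_u \lambda_J^4\|\uhi\|_{L_x^4}^2,
\]
where $\|u\nabla u\|_{L^2_x}\leq\|u\|_{L_x^8}\|\nabla u\|_{L_x^{8/3}}\lesssim_u 1$ by Sobolev embedding and \eqref{E:bdd}. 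On the support of $n'$, the very definition of $n_0$ gives $\|\uhi\|_{L_x^4}^2\sim 1/n$, so this becomes $\lesssim_u (Re^J)^4/n^5$. Inserting and using \eqref{E:nm02},
\[
\text{contribution}\lesssim_u \frac{(Re^J)^4}{J}\int_I\frac{|n'|}{n^6}\,dt \lesssim_u \frac{(Re^J)^4(2^{-5m}K+1)}{J}.
\]
The parameters in \eqref{par1} are calibrated so that $(Re^J)^4\cdot 2^{-5m}\sim 1$, making the first term $\sim K/J\lesssim\eta K$; meanwhile $(Re^J)^4\lesssim K^{1-\alpha}$ via \eqref{Vsmall} absorbs the second.

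The $\uhi\bar\ulo$ and $|\ulo|^2$ pieces in $|u(y)|^2$ both carry a factor of $\ulo$, and I would exploit the smallness this provides. For $K$ sufficiently large, the frequency cutoff $K^{-1/5}$ of $\PLo$ lies below $cN(t)$ uniformly on $I$, so \eqref{IMap} gives $\|\nsc\ulo\|_{L_t^\infty L_x^2}\leq\eta$; via Bernstein and Sobolev this upgrades to small $L_{t,x}^\infty$, $L_t^\infty L_x^4$, and $L_t^\infty L_x^2$ bounds on $\ulo$ combining $\eta$ with negative powers of $K$ (the gain reflecting that \eqref{nls} is energy-supercritical). Running the same Young/H\"older scheme with one or two $\ulo$ factors in place of $\uhi$ in the $y$-integral produces contributions $\lesssim_u\eta K$ for each such piece, with harmless geometric factors $(Re^J)^\ell K^\delta$ absorbed by \eqref{Vsmall}.

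The principal technical obstacle is the calibration of $m$ in \eqref{par1}: it is chosen precisely so that the gain $2^{-5m}$ in the smoothing-algorithm bound \eqref{E:nm02} cancels the geometric loss $\lambda_J^4\cdot n^4=(Re^J)^4$ produced by the ball-mass estimate on $|\uhi|^2$. Without the time-dependent rescaling of the Morawetz weight and the iterated smoothing of Section~\ref{S:nt}, the pointwise bound $|n'|\lesssim n^3$ alone would not suffice, and it is only the joint presence of this cancellation and of the factor $1/J\lesssim\eta$ from \eqref{JJ0} that yields the required smallness $\eta K$.
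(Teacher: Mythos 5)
Your treatment of the main piece, where $|\uhi(y)|^2$ sits at $y$, is correct and is essentially the paper's argument: the pointwise bound $|\partial_t a_k|\lesssim \frac{|n'|}{Jn}\mathbf{1}_{\{\lambda_0\le|x-y|\le\lambda_J\}}$, the use of \eqref{E:nm04} and \eqref{n101} to identify $\|\uhi(t)\|_{L_x^4}^2\sim n_m(t)^{-1}$ on the support of $n_m'$, the resulting $\int_I|n_m'|n_m^{-6}\,dt$ controlled by \eqref{E:nm02}, and the calibration $(Re^J)^4 2^{-5m}\sim 1$ from \eqref{par1} are all exactly as in the paper, and your identification of this cancellation as the crux is on target.

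The gap is in the pieces of $|u(y)|^2$ carrying a factor of $\ulo$. Your scheme produces, after the Young/H\"older step, a factor $\|\uhi(t)\|_{L_x^4}^{2-k}$ with $k=1$ or $2$ at $y$, hence only $n(t)^{-1+k/2}$ rather than $n(t)^{-1}$, leaving you to control $\int_I |n'(t)|\,n(t)^{-5}\,dt$ or $\int_I |n'(t)|\,n(t)^{-11/2}\,dt$ times whatever smallness $\ulo$ provides. Neither integral is controlled by $K$: the available bounds are \eqref{E:nm02} for $\int |n'|n^{-6}$ and, via \eqref{E:nm03}, $\int|n'|n^{-j}\lesssim\int n^{3-j}\lesssim K$ only for $j\ge 6$; for $j=5$ one gets $\int n^{-2}\lesssim |I|$, which is unbounded. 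Moreover the pointwise-in-time smallness you invoke is partly unavailable: there is no small (or even finite, uniformly in $K$) bound on $\|\ulo\|_{L_t^\infty L_x^2}$, since the solution carries no mass bound and Bernstein from Proposition~\ref{P:decay} gives a \emph{positive} power of $K$. The paper closes these terms differently: it keeps the bound $\|\partial_t\nabla a(t)\|_{L_x^1}\lesssim \frac1J(Re^J)^4|n'|n^{-5}\lesssim\frac1J(Re^J)^4 n^{-2}$ and applies H\"older \emph{in time} with exponent $\tfrac32$, so that $\|\partial_t\nabla a\|_{L_t^{3/2}L_x^1}\lesssim \frac1J(Re^J)^4K^{2/3}$ by \eqref{E:nm05}; the complementary $L_t^3$ and $L_t^4$, $L_t^{12}$ integrability of the solution factors comes from the long-time Strichartz bounds of Corollary~\ref{C:lts} ($\|\nsc\ulo\|_{L_{t,x}^3}+\|\ulo\|_{L_t^4L_x^\infty}\lesssim_u1$ and $\|\nabla\uhi\|_{L_t^{12}L_x^{24/11}}\lesssim_u K^{1/10}$), yielding $(Re^J)^4K^{23/30}$, which \eqref{Vsmall} absorbs into $\eta K$. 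In short, for the $\ulo$ pieces you must trade pointwise-in-time smallness for space-time integrability; as written, your argument for these terms does not close.
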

\begin{proof} Note that
$$
\partial_t a_k(t,x)=w_{rr}(n_m(t)|x|) x_k n_m'(t).
$$
Thus, by \eqref{E:higher},  
$$
\|\partial_t \nabla a\|_{L_x^1}\lesssim \tfrac{1}{J}(Re^J)^4 \tfrac{|n_m'(t)|}{n_m(t)^5}.
$$

For the low frequency contributions, Proposition~\ref{P:decay}, Sobolev embedding, Corollary~\ref{C:lts}, \eqref{Vsmall}, \eqref{E:nm03}, and \eqref{E:nm05} imply
\begin{align*}
\biggl|&\iiint |\ulo(y)|^2\partial_t a_k(x-y)2\Im \bar{u}u_k(x)\,dx\,dy\,dt\biggr| \\
&\lesssim\bigl[\|u\|_{L_t^\infty L_x^{\frac{72}{19}}}^3 \|\nabla\ulo\|_{L_t^3 L_x^{\frac{24}{5}}} + \|\ulo\|_{L_t^4 L_x^\infty}\|u\|_{L_t^\infty L_x^{\frac{48}{13}}}^2\|\nabla\uhi\|_{L_t^{12} L_x^{\frac{24}{11}}}\bigr]\|\partial_t\nabla a\|_{L_t^{\frac32} L_x^1}  \\
&\lesssim_u \bigl[\|\nsc \ulo\|_{L_{t,x}^3}+K^{\frac{1}{10}}\bigr](Re^J)^4\biggl(\int \biggl(\frac{|n_m'(t)|}{n_m(t)^5}\biggr)^{\frac32}\,dt\biggr)^{\frac23} \\
&\lesssim_u K^{\frac{1}{10}}(Re^J)^4\biggl(\int_I n_m(t)^{-3}\,dt\biggr)^{\frac23}\lesssim_u (Re^J)^4K^{\frac{23}{30}}\lesssim_u\eta K.
\end{align*}

For the high frequency terms, we use the definition of $n_0$ in \eqref{D:n}. We also rely crucially on \eqref{E:nm02} and \eqref{n101}. Using \eqref{par1}, \eqref{JJ0}, and \eqref{Vsmall} as well, 
\begin{align}
\nonumber
\biggl|\iiint |\uhi(y)|^2&\partial_t a_k(x-y)2\Im \bar{u}u_k(x)\,dx\,dy\,dt\biggr| \\
\nonumber
&\lesssim \int_I \|\uhi(t)\|_{L_x^4}^2 \|\partial_t \nabla a\|_{L_x^1} \|u(t)\|_{L_x^8} \|\nabla u(t)\|_{L_x^{\frac83}}\,dt \\
\nonumber
&\lesssim_u \tfrac{1}{J}(Re^J)^4 \int_I \frac{|n_m'(t)|}{n_0(t) n_m(t)^5}\,dt \\
&\lesssim_u \tfrac{1}{J}(Re^J)^4 (2^{-5m}K+1)\lesssim_u \eta K.	\label{whym}
\end{align}
This completes the proof of Lemma~\ref{L:dta}.\end{proof}

Before turning to the other error terms, we combine Proposition~\ref{L:mass} with Corollary~\ref{C:lts} to deduce the following estimate, which inherits the logarithmic loss from Proposition~\ref{L:mass}.

\begin{corollary}\label{C:mass} The following estimate holds: 
$$
\sup_{j\leq J}\int_I \sup_{x\in\R^4} \int_{|x-y|\leq \lambda_j} |\uhi(t,y)|^2\,dy\,dt\lesssim_u JK,
$$
where $\lambda_j$ is as in \eqref{E:lambda}.
\end{corollary}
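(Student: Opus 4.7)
The plan is to apply Proposition~\ref{L:mass}, with $\uhi$ playing the role of $u$; the equation satisfied by $\uhi$ is $(i\partial_t+\Delta)\uhi=\PHi(|u|^4 u)$. We choose $\lambda(t):=\lambda_j(t)=\tfrac{Re^j}{n_m(t)}$ and $n(t):=n_m(t)$, so that $\lambda(t)n(t)=Re^j\geq R\gg 1$. Thus the hypothesis $\lambda>n^{-1}$ holds, $\|\lambda\|_{L_t^\infty}\lesssim_u Re^J$ by \eqref{E:nm01}, and---this is the key observation---$\|\log(\lambda n)\|_{L_t^\infty}=\log(Re^j)\lesssim J$ for every $j\leq J$.

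For the splitting $\PHi(|u|^4 u)=F+G$, we set $F:=\PHi(\text{\O}(\ulo^2 u^3))$ and $G:=\PHi(\text{\O}(\uhi^2 u^3))$. Since every term in the expansion of $|u|^4 u$ in the variables $\ulo,\uhi$ contains either at least two $\ulo$-factors or at least two $\uhi$-factors, this splitting exhausts the nonlinearity. Standard Littlewood--Paley theory shows that $\PHi$ is bounded on every $L_x^p(\R^4)$, $1\leq p\leq\infty$ (since $\PHi=I-\PLo$ and $\PLo$ is convolution with a function whose $L^1$-norm is uniformly bounded in the cutoff). Combining this with \eqref{E:non}, H\"older's inequality, the Sobolev embedding $\dot H_x^{\frac32}(\R^4)\hookrightarrow L_x^8(\R^4)$, Bernstein's inequality, and \eqref{E:bdd}, we obtain
$$
\|F\|_{L_t^2 L_x^{\frac43}}^2\lesssim_u K^{\frac35},\quad \|G\|_{L_t^2 L_x^1}^2\lesssim_u K,\quad \|G\|_{L_t^\infty L_x^{\frac85}}\lesssim_u 1,\quad \|\uhi\|_{L_t^\infty L_x^2}^2\lesssim_u K^{\frac35}.
$$

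Plugging these bounds into the conclusion of Proposition~\ref{L:mass} and invoking \eqref{E:nm05} to control $\int_I n_m(t)^{-3}\,dt$, we arrive at
$$
\int_I\sup_{x\in\R^4}\int_{|x-y|\leq\lambda_j}|\uhi(t,y)|^2\,dy\,dt \;\lesssim_u\; (Re^J)^2 K^{\frac35} \;+\; JK \;+\; K.
$$
By \eqref{par1}, $(Re^J)^2\sim K^{2\alpha(1+\alpha)}$, so the first term on the right is $\lesssim_u K^{\frac35+O(\alpha)}\lesssim\eta K\lesssim JK$ by \eqref{Vsmall}, while the last is trivially $\leq JK$. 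As all bounds are uniform in $j\leq J$, taking the supremum over $j\leq J$ completes the proof.

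The main obstacle---and indeed the entire raison d'\^etre of this corollary---is the factor $\|\log(\lambda n)\|_{L_t^\infty}$ appearing in Proposition~\ref{L:mass}, which quantifies the logarithmic failure of the double Duhamel argument in four space dimensions. This is the mechanism by which the loss $J\sim\log R$ enters the analysis; it will be absorbed in the Morawetz estimate of Section~\ref{S:QS} through the spatial truncation of the weight $w$, whose derivative estimates \eqref{E:higher} carry a compensating factor of $J^{-1}$.
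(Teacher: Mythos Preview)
Your proof is correct and follows essentially the same approach as the paper: apply Proposition~\ref{L:mass} to $\uhi$ with $n=n_m$, $\lambda=\lambda_j$, and the splitting $F=\PHi\text{\O}(\ulo^2 u^3)$, $G=\text{\O}(\uhi^2 u^3)$, then invoke \eqref{E:non}, Bernstein, \eqref{E:bdd}, \eqref{E:nm01}, \eqref{E:nm05}, and \eqref{JlogR}. The only cosmetic difference is that you keep the projection $\PHi$ on $G$ and explicitly note its $L^p$-boundedness, whereas the paper drops it; either way the required bounds on $G$ follow.
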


\begin{proof} We apply Proposition~\ref{L:mass} with $n=n_m$, $\lambda=\lambda_j$, and $u=\uhi$, choosing $F=\PHi\text{\O}(\ulo^2 u^3)$ and $G=\text{\O}(\uhi^2 u^3)$. 

First, by \eqref{E:nm01}, \eqref{Vsmall}, Bernstein,  and Corollary~\ref{C:lts}, we have
\begin{align*}
\|\lambda_j\|_{L_t^\infty}^2&\bigl[\|\uhi\|_{L_t^\infty L_x^2}^2+\|F\|_{L_t^2 L_x^{\frac43}}^2\bigr]
\lesssim_u (Re^J)^2 K^{\frac35}\lesssim_u K.
\end{align*}

Next, using \eqref{JlogR} and Corollary~\ref{C:lts},
$$
\sup_{0\leq j\leq J}[1+\|\log(\lambda_j n_m)\|_{L_t^\infty}]\|G\|_{L_t^2 L_x^1}^2 \lesssim_u JK.
$$
Finally, by \eqref{E:bdd} and \eqref{E:nm05}, we have
$$
\|G\|_{L_t^\infty L_x^{\frac85}}^2\int_I n_m(t)^{-3}\,dt\lesssim_u K.
$$
The result follows.\end{proof}

We now turn to \eqref{E:scary}.

\begin{lemma}[Estimation of \eqref{E:scary}]\label{L:scary} Define
$$
\Phi_{jk}(x,y)=|u(y)|^2 \Re(\bar{u}_k u_j)(x)-\Im(\bar{u}u_j)(y)\Im(\bar{u}u_k)(x).
$$
We estimate \eqref{E:scary} in two pieces:
\begin{align}
\label{scary1}
& \iiint\limits_{|x-y|\leq\lambda_0} a_{jk}(x-y)\Phi_{jk}(x,y)\,dx\,dy\,dt\geq 0, \\
\label{scary2}
&\biggl|\iiint\limits_{|x-y|>\lambda_0}a_{jk}(x-y)\Phi_{jk}(x,y)\,dx\,dy\,dt\biggr| \lesssim_u \eta K.
\end{align}
\end{lemma}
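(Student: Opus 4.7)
The first claim \eqref{scary1} is the standard positive-commutator structure of the interaction Morawetz action in four space dimensions. On $|x-y|\leq\lambda_0=R/n_m(t)$, definition \eqref{D:W} gives $w_r=1$ and $w_{rr}=0$, so $a(t,x-y)=|x-y|$ and
$$a_{jk}(x-y)=\tfrac{1}{|x-y|}\bigl(\delta_{jk}-\omega_j\omega_k\bigr),\qquad \omega:=\tfrac{x-y}{|x-y|},$$
a non-negative projection onto the hyperplane orthogonal to $\omega$. As $a_{jk}$ is even in $x-y$, I would symmetrize the integrand under $x\leftrightarrow y$; writing $p_j(x):=\Im(\bar u u_j)(x)$ and $f^\perp:=f-(f\cdot\omega)\omega$, the symmetrized integrand equals
$$\tfrac{1}{2|x-y|}\Bigl\{|u(y)|^2|(\nabla u)^\perp(x)|^2+|u(x)|^2|(\nabla u)^\perp(y)|^2-2\,p^\perp(x)\cdot p^\perp(y)\Bigr\}\geq 0,$$
by Cauchy--Schwarz together with $|p^\perp|\leq|u|\,|(\nabla u)^\perp|$, which in turn follows from $p^\perp=\Im(\bar u(\nabla u)^\perp)$. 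This is the classical \cite{ckstt} computation.

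For \eqref{scary2}, I would decompose the Hessian into its radial and angular parts,
$$a_{jk}(x-y)=\underbrace{n_m(t)w_{rr}(n_m|x-y|)\,\omega_j\omega_k}_{=:R_{jk}}+\underbrace{\tfrac{w_r(n_m|x-y|)}{|x-y|}(\delta_{jk}-\omega_j\omega_k)}_{=:A_{jk}}.$$
Estimate \eqref{E:higher} yields $|R_{jk}|\lesssim J^{-1}|x-y|^{-1}$, so the $R_{jk}$-contribution carries an intrinsic factor $J^{-1}\lesssim\eta$ via \eqref{JJ0}. The angular part satisfies only $|A_{jk}|\lesssim|x-y|^{-1}$, but is supported in the $J$ dyadic annuli $\lambda_{j-1}<|x-y|\leq\lambda_j$, $1\leq j\leq J$, on each of which $w_r\leq 1$.

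Next, split $u=\uhi+\ulo$ as in \eqref{def:hi} and expand $\Phi_{jk}(x,y)$ into a sum of multilinear terms. I would bound each resulting term by $\eta K$ using one of three mechanisms. \textbf{(i)} For purely low-frequency contributions, where both $u$-factors at $y$ are replaced by $P_{\leq cN(t)}u$, almost periodicity \eqref{IMap} (together with \eqref{E:decay} and the long-time Strichartz bounds \eqref{E:ulo}) supplies the factor $\eta$ directly. \textbf{(ii)} For terms in which at least one $u$-factor at $y$ is $\uhi$, Corollary~\ref{C:mass} bounds $\int_{|x-y|\leq\lambda_j}|\uhi(t,y)|^2\,dy$ at the cost of a logarithmic loss of size $J$; for the $R_{jk}$-piece, the intrinsic $J^{-1}$ cancels this loss exactly, whereas for the $A_{jk}$-piece the sum over $j$ annuli, coupled with the geometric spacing $\lambda_j=e\lambda_{j-1}$, telescopes. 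The remaining smallness is extracted from $\uhi$-norms such as \eqref{E:uhi1}--\eqref{E:uhi2} via Bernstein inequalities that exploit the half-derivative gap between the critical regularity $\tfrac32$ and the single derivative appearing in the Morawetz action. \textbf{(iii)} Mixed cross terms interpolate between (i) and (ii) by H\"older together with the Strichartz bounds \eqref{E:ulo}--\eqref{E:non}. Polynomial powers of $R$ and $e^J$ accumulated along the way are absorbed into $K^{1-\alpha}\lesssim\eta K$ by \eqref{Vsmall}.

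The main obstacle is the angular piece $A_{jk}$: it lacks the intrinsic $J^{-1}$ of $R_{jk}$, so naively applying Corollary~\ref{C:mass} on each of the $J$ annuli would yield a $J\cdot JK=J^2K$ loss. Overcoming this requires combining the dyadic geometry $|x-y|\sim\lambda_j$ on annulus $j$ with the Bernstein gain $\|\nabla\uhi\|_{L_t^{12}L_x^{24/11}}\lesssim_u K^{1/10}$ from \eqref{E:uhi2} and the pointwise smallness \eqref{IMap} on very low frequencies, so that after summation one genuinely obtains $\eta K$ rather than $JK$. This is precisely the balance for which the smoothing construction of Section~\ref{S:nt} and the parameter choice \eqref{par1} have been engineered.
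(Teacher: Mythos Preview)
Your treatment of \eqref{scary1} is correct and matches the paper. For \eqref{scary2}, however, you miss the crucial structural observation that makes the argument work.

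You correctly identify the angular piece $A_{jk}=\tfrac{a_r}{r}(\delta_{jk}-\omega_j\omega_k)$ as the obstacle, since it lacks the $J^{-1}$ factor. But you then try to bound its contribution as an error term, and your proposed mechanism (dyadic telescoping together with \eqref{E:uhi2} and \eqref{IMap}) does not close. Concretely, consider the high-frequency angular contribution coming from the intermediate-frequency band $\lambda_j^{-1}<\cdot\leq cN(t)$ (the analogue of \eqref{scary32.5} without the $J^{-1}$). Carrying out the Bernstein-and-sum computation gives
\[
\sum_{j=0}^J\lambda_j^{-1}\|P_{\lambda_j^{-1}<\cdot\leq cN(t)}\nabla u\|_{L_x^2}^2\lesssim\|\nsc P_{\leq cN(t)}u\|_{L_x^2}^2\lesssim\eta^2,
\]
but Corollary~\ref{C:mass} costs $JK$, so this piece is $\lesssim\eta^2 JK$. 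Since \eqref{JJ0} only gives $J\gtrsim\eta^{-1}$ (there is no upper bound on $J$ in terms of $\eta$; $K$ can be taken arbitrarily large), $\eta^2 JK$ need not be $\lesssim_u\eta K$. The estimate \eqref{E:uhi2} does not rescue this: it controls $\nabla\uhi$ in a particular Strichartz norm, but the angular piece forces $|\nabla\uhi|^2$ in $L_x^1$-type norms integrated over $J$ annuli, and there is no extra smallness to extract.

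The point you are missing is that $A_{jk}$ has a \emph{sign}. Since $a_r\geq 0$ by construction \eqref{D:W}, the matrix $A_{jk}$ is positive semi-definite, and you have already shown that the symmetrized $\Psi_{jk}$ is positive semi-definite. Hence $A_{jk}\Psi_{jk}\geq 0$ on the entire region $|x-y|>\lambda_0$. This is exactly the same positivity that yields \eqref{scary1}; it persists in the outer region. Consequently the angular contribution can be discarded (it has the favorable sign in the Morawetz identity), and only the radial piece $R_{jk}$ needs to be estimated. Since $|a_{rr}|\lesssim\tfrac{1}{Jr}$ by \eqref{E:higher}, the entire error reduces to bounding
\[
\int_I\iint_{\lambda_0<|x-y|\leq\lambda_J}\frac{|\nabla u(x)|^2|u(y)|^2}{J|x-y|}\,dx\,dy\,dt,
\]
which carries the $J^{-1}$ uniformly. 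From here your mechanisms (i)--(iii) are essentially the right ones and match the paper's decomposition into \eqref{scary31}--\eqref{scary33}.
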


\begin{proof} As $a_{jk}(x-y)=a_{jk}(y-x)$, we may replace $\Phi_{jk}$ by the hermitian matrix
$$
\Psi_{jk}(x,y)=\tfrac12\Phi_{jk}(x,y)+\tfrac12\Phi_{jk}(y,x).
$$
We have that $\Psi_{jk}$ is a positive semi-definite quadratic form on $\R^4$, since
$$
|e_ke_j\Im(\bar{u}u_j)(y)\Im(\bar{u}u_k)(x)|\leq\tfrac12|u(x)|^2|e\cdot\nabla u(y)|^2+\tfrac12|u(y)|^2|e\cdot\nabla u(x)|^2
$$
for any $e\in\R^4$. Recalling that $a=|x|$ for $|x|\leq\lambda_0$, we see that $a_{jk}$ is positive semi-definite for $|x|\leq\lambda_0$. Thus \eqref{scary1} follows. 

In general, the eigenvalues of the Hessian of $a$ are $a_{rr}$ and $\tfrac{a_r}{r}$. By construction, we have $a_r\geq 0$ and $|a_{rr}|\lesssim\tfrac{1}{Jr}$. Thus, to prove \eqref{scary2}, it suffices to estimate
\begin{equation}\label{scary3}
\int_I \iint_{\lambda_0 < |x-y| \leq \lambda_J} \frac{|\nabla u(x)|^2 |u(y)|^2}{J|x-y|}\,dx\,dy\,dt.
\end{equation}

Taking $c=c(\eta)$ as in \eqref{IMap}, we have
\begin{align}
\eqref{scary3} &\lesssim \iiint\limits_{\lambda_0<|x-y|\leq\lambda_J} \frac{|\nabla u(x)|^2|u(y)|^2-|\nabla\uhi(x)|^2|\uhi(y)|^2}
	{|x-y|}\,dx\,dy\,dt \label{scary31} \\
\label{scary32}
&+\tfrac{1}{J}\sum_{j=0}^J \iiint\limits_{\lambda_j<|x-y|\leq\lambda_{j+1}}\lambda_j^{-1}|\Pj\nabla\uhi(x)|^2
	|\uhi(y)|^2\,dx\,dy\,dt \\
\label{scary32.5}
&+\tfrac{1}{J}\sum_{j=0}^J \iiint\limits_{\lambda_j<|x-y|\leq\lambda_{j+1}}\lambda_j^{-1}|\Pjm\nabla\uhi(x)|^2
	|\uhi(y)|^2\,dx\,dy\,dt \\
\label{scary33}
&+ \tfrac{1}{J}\sum_{j=0}^J \iiint\limits_{\lambda_j<|x-y|\leq\lambda_{j+1}}\lambda_j^{-1}
	|\Pc\nabla\uhi(x)|^2|\uhi(y)|^2\,dx\,dy\,dt.
\end{align}

We first use Corollary~\ref{C:lts}, Proposition~\ref{P:decay}, \eqref{E:nm01}, \eqref{E:nm05}, and \eqref{Vsmall} to estimate
\begin{align*}
\eqref{scary31}&\lesssim \bigl[\|\nabla\ulo\|_{L_{t,x}^4} \|\nabla u\|_{L_t^\infty L_x^{\frac83}}\|u\|_{L_t^\infty L_x^8} \|u\|_{L_t^\infty L_x^4} \\
&\quad\quad +\|\nabla u\|_{L_t^\infty L_x^{\frac83}}^2 \|\ulo\|_{L_t^4 L_x^\infty} \|u\|_{L_t^\infty L_x^4}\bigr]\biggl(\int_I n_m(t)^{-4}\,dt\biggr)^{\frac34}(Re^J)^3 \\
&\lesssim_u \bigl[\|\nsc \ulo\|_{L_t^4 L_x^{\frac83}}+1\bigr]K^{\frac34}(Re^J)^3 \lesssim_u \eta K. 
\end{align*}

For \eqref{scary32}, we let
$$
S=\{N_1,\dots,N_4, j\,|\, K^{-\frac15}\leq N_1\leq\cdots\leq N_4,\ N_2\leq \lambda_j^{-1}\}
$$
and use Young's inequality, H\"older's inequality, Bernstein, Cauchy--Schwarz, and Corollary~\ref{C:lts} to estimate
\begin{align*}
\eqref{scary32}&\lesssim\tfrac{1}{J}\int_I \bigl[\sup_M M^{-2}\|P_M\uhi\|_{L_x^\infty}\bigr]^2\sum_S \lambda_j^{3}
	\tfrac{(N_1N_2)^2}{(N_3 N_4)^{\frac12}}\|u_{N_3}\|_{\dot H_x^{\frac32}} \|u_{N_4}\|_{\dot H_x^{\frac32}}\,dt \\
&\lesssim_u \tfrac{1}{J} \|\nsc u\|_{L_t^\infty L_x^2}^2 K \lesssim_u \eta K.
\end{align*}

For \eqref{scary32.5}, we use Bernstein, \eqref{IMap}, and Corollary~\ref{C:mass} to estimate
\begin{align}
&\eqref{scary32.5}\nonumber \\
&\lesssim \tfrac1J\biggl\|\sum_{j=0}^J 
	\|\lambda_j^{-\frac12}\Pjm\nabla\uhi\|_{ L_x^2}^2\biggr\|_{L_t^\infty}
\sup_{j\leq J}\int_I \sup_{x\in\R^4}\int_{|x-y|\leq\lambda_{j+1}}|\uhi(y)|^2\,dy\,dt \nonumber \\
&\lesssim \|\nsc P_{\leq cN(t)}u\|_{L_t^\infty L_x^2}^2 K\lesssim_u \eta K.	\label{est scary32.5}
\end{align}

We next turn to \eqref{scary33}. In light of Corollary~\ref{C:mass}, it suffices to show that 
\begin{equation}\label{scary331}
\sum_{j=0}^J \|\lambda_j^{-\frac12}\Pc\nabla\uhi\|_{L_t^\infty L_x^2}^2\lesssim_u \eta.
\end{equation}

In fact, by Bernstein and \eqref{E:Rbig}, we have
\begin{align*}
\|\lambda_j^{-\frac12}\Pc\nabla\uhi\|_{L_x^2}^2\lesssim [\lambda_j cN(t)]^{-1}\|\nsc u\|_{L_t^\infty L_x^2}^2\lesssim_u \tfrac{\eta}{J}
\end{align*} 
uniformly in $j$ and $t$. Thus \eqref{scary331} follows. This completes the proof of Lemma~\ref{L:scary}. \end{proof}

We next turn to \eqref{E:potential}. 

\begin{lemma}[Potential energy term]\label{L:potential} We estimate \eqref{E:potential} as follows. First,
\begin{align}
\label{E:potentiallow}
&\biggl|\iiint \Delta a(x-y)\bigl[|u(x)|^6|u(y)|^2-|\uhi(x)|^6|\uhi(y)|^2\bigr]\,dx\,dy\,dt\biggr|\lesssim_u \eta K, \\
\label{E:pos1}
&\iiint_{|x-y|\leq\lambda_0} \Delta a(x-y)|\uhi(x)|^6 |\uhi(y)|^2\,dx\,dy\,dt\geq 0.
\end{align}
Next, recalling $\Delta a=a_{rr}+\tfrac{3}{r}a_r$, we have
\begin{align}
\label{E:pos2}
&\iiint_{|x-y|>\lambda_0} \frac{a_r(x-y) }{|x-y|}|\uhi(x)|^6 |\uhi(y)|^2\,dx\,dy\,dt\geq 0, \\
\label{E:potsmall}
&\biggl\vert\iiint_{|x-y|>\lambda_0} a_{rr}(x-y)|\uhi(x)|^6 |\uhi(y)|^2 \,dx\,dy\,dt\biggr\vert\lesssim_u \eta K.
\end{align}
\end{lemma}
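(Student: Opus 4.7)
The plan is to handle the four parts of Lemma~\ref{L:potential} separately. The positivity assertions \eqref{E:pos1} and \eqref{E:pos2} are immediate from the construction of $w$ in \eqref{D:W}: for \eqref{E:pos1}, on the ball $|x-y|\leq\lambda_0$ we have $w_r\equiv 1$ and $w_{rr}\equiv 0$, so $\Delta a(x-y)=\tfrac{3}{|x-y|}\geq 0$, making the integrand pointwise non-negative; for \eqref{E:pos2}, the property $w_r\geq 0$ yields $\tfrac{a_r(x-y)}{|x-y|}\geq 0$, so again the integrand is pointwise non-negative.

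For \eqref{E:potentiallow}, write $u=\uhi+\ulo$ and expand $|u|^6|u|^2-|\uhi|^6|\uhi|^2$ as a finite sum of terms, each containing at least one factor of $\ulo$. Direct computation from \eqref{D:a}, \eqref{D:W}, and \eqref{E:higher} gives $\|\Delta a(t)\|_{L_x^1}\lesssim(Re^J/n_m(t))^3$. Applying Young's inequality in space followed by H\"older in time, using the $\ulo$-bounds from Corollary~\ref{C:lts} (specifically \eqref{E:ulo}) for smallness, \eqref{E:bdd} and Proposition~\ref{P:decay} for the remaining $\uhi$-factors, together with the time-integral bound $\int_I n_m(t)^{-3}\,dt\lesssim_u K$ from \eqref{E:nm05}, each contribution collapses to an expression of the form $(Re^J)^\ell K^\delta$ with $\ell\geq 1$ and $\delta<1$, which by \eqref{Vsmall} is $\lesssim_u\eta K$.

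The main work lies in \eqref{E:potsmall}. On the annulus $\lambda_0<|x-y|\leq\lambda_J$, the relations \eqref{D:a}, \eqref{D:W}, and \eqref{E:higher} yield the pointwise bound $|a_{rr}(x-y)|\lesssim\tfrac{1}{J|x-y|}$, with support contained in $|x-y|\leq\lambda_J$. Decompose the annulus into dyadic shells $\lambda_j<|x-y|\leq\lambda_{j+1}$ for $j=0,\dots,J-1$, on each of which $|a_{rr}|\lesssim\tfrac{1}{J\lambda_j}$. The strategy is to combine Corollary~\ref{C:mass} (applied to the $|\uhi(y)|^2$-factor inside a ball of radius $\lambda_{j+1}$ around $x$) with a frequency decomposition of the $\uhi(x)$-factors at the scale $cN(t)$. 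Writing $\uhi=P_{K^{-1/5}<\cdot\leq cN(t)}u+P_{>cN(t)}u$, low-frequency contributions yield smallness $\eta$ via \eqref{IMap}, the Sobolev embedding $\dot H_x^{\frac32}\hookrightarrow L_x^8$, and interpolation with Proposition~\ref{P:decay}, while high-frequency contributions yield Bernstein gains against the $\dot H_x^{\frac32}$-bound \eqref{E:bdd} (exploiting the energy-supercriticality of \eqref{nls}, as emphasized in the introduction). The logarithmic loss $JK$ coming from Corollary~\ref{C:mass} is absorbed by the factor $\tfrac{1}{J}$ in $a_{rr}$, and the residual $\tfrac{1}{\lambda_j}$ factor on each shell is handled via frequency-localized reasoning analogous to the treatment of $\lambda_j^{-1/2}\Pjm\nabla\uhi$ in \eqref{est scary32.5}. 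Summing over $j$ then produces the bound $\lesssim_u\eta K$.

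The hard part is \eqref{E:potsmall}: one must cancel the logarithmic loss in Corollary~\ref{C:mass} against the $\tfrac{1}{J}$ in $a_{rr}$ while simultaneously extracting enough smallness from the frequency-decomposed $\uhi(x)$-factors to absorb the residual $\tfrac{1}{\lambda_j}$ on each dyadic shell. This mirrors the treatment of \eqref{scary32.5}--\eqref{scary33} in Lemma~\ref{L:scary}, but the required smallness must now be distributed across several $\uhi$-factors rather than extracted from a single gradient factor, since the term under consideration contains no derivatives on $u$.
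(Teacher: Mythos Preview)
Your treatment of \eqref{E:pos1}, \eqref{E:pos2}, and \eqref{E:potentiallow} is correct and matches the paper's argument.

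For \eqref{E:potsmall}, however, the two-way frequency split $\uhi=P_{K^{-1/5}<\cdot\leq cN(t)}u+P_{>cN(t)}u$ that you propose is not sufficient. The analogy with \eqref{est scary32.5} that you invoke works there precisely because the projection $\Pjm$ carries a $j$-\emph{dependent} lower frequency cutoff at $\lambda_j^{-1}$; this is what allows the factor $\lambda_j^{-1/2}$ to be converted via Bernstein into the extra half-derivative that upgrades $\|\nabla\,\cdot\,\|_{L_x^2}$ to $\|\nsc\,\cdot\,\|_{L_x^2}$, after which the sum over $j$ collapses to $\|\nsc P_{\leq cN(t)}u\|_{L_x^2}^2\lesssim\eta^2$. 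With your $j$-independent lower cutoff at $K^{-1/5}$, no such collapse occurs: on your ``low'' piece the sum over shells gives $\sum_j\lambda_j^{-1}\sim\lambda_0^{-1}=n_m(t)/R$, which cannot be controlled uniformly in $t$, and the modes near $K^{-1/5}$ do not supply enough decay to compensate.

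The paper instead performs a \emph{three}-way split on each shell, writing $\uhi=\Pj\uhi+\Pjm\uhi+\Pc\uhi$. After the Sobolev-type bound $\|\uhi\|_{L_x^6}^6\lesssim_u\|\nabla\uhi\|_{L_x^2}^2$, the middle and high pieces are handled exactly as in \eqref{scary32.5} and \eqref{scary331}. The very-low-frequency piece $|\Pj\uhi(x)|^6$ (frequencies between $K^{-1/5}$ and $\lambda_j^{-1}$) requires a genuinely different mechanism, which is absent from your sketch: two of the six factors are placed in $L_x^\infty$ and estimated via the maximal Strichartz bound \eqref{E:uhi1}, while a Cauchy--Schwarz summation over the remaining frequencies (all $\leq\lambda_j^{-1}$) absorbs the volume factor $\lambda_j^{3}$ produced by Young's inequality. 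Without isolating and treating this piece separately, the argument does not close.
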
 

\begin{proof} First, by Corollary~\ref{C:lts}, Proposition~\ref{P:decay}, \eqref{E:nm01}, \eqref{E:nm05}, and \eqref{Vsmall}, we have
\begin{align*}
\eqref{E:potentiallow}\lesssim 
\|\ulo\|_{L_t^4 L_x^\infty}\|u\|_{L_t^\infty L_x^7}^7 \biggl(\int_I n_m(t)^{-4}\,dt\biggr)^{\frac34}(Re^J)^3
\lesssim_u K^{\frac34}(Re^J)^3\lesssim_u \eta K.
\end{align*}

Next, we have by construction that $a=|x|$ for $|x|\leq\lambda_0$ and $a_r\geq 0$ for all $x$. Thus, \eqref{E:pos1} and \eqref{E:pos2} follow.

Finally, we turn to \eqref{E:potsmall}. By \eqref{E:higher}, it suffices to estimate 
\begin{equation}\label{E:potsmall2} 
\int_I\iint_{\lambda_0<|x-y|\leq\lambda_J}\frac{|\uhi(x)|^6 |\uhi(y)|^2}{J|x-y|}\,dx\,dy\,dt.
\end{equation} 

To this end, we proceed as in Lemma~\ref{L:scary} and write
\begin{align}
\eqref{E:potsmall2}&\lesssim \tfrac{1}{J}\sum_{j=0}^J \iiint\limits_{\lambda_j<|x-y|\leq\lambda_{j+1}}	
	\lambda_j^{-1} |\Pj\uhi(x)|^6 |\uhi(y)|^2 \,dx\,dy\,dt \label{E:pot1} \\
&+ \tfrac{1}{J}\sum_{j=0}^J \iiint\limits_{\lambda_j<|x-y|\leq\lambda_{j+1}}
	\lambda_j^{-1}|\Pjm\uhi(x)|^6 |\uhi(y)|^2 \,dx\,dy\,dt \label{E:pot2} \\
&+ \tfrac{1}{J}\sum_{j=0}^J \iiint\limits_{\lambda_j<|x-y|\leq\lambda_{j+1}}
	\lambda_j^{-1}|\Pc\uhi(x)|^6|\uhi(y)|^2\,dx\,dy\,dt,	\label{E:pot3} 
\end{align}
where $c$ is as in \eqref{IMap}. 

For \eqref{E:pot1}, we let
$$
S=\{N_1,\dots,N_6,j\ |\, K^{-\frac15}\leq N_1\leq\cdots\leq N_6\leq(\lambda_j)^{-1},\ j=0,\dots,J\}.
$$
Then by Young's inequality, H\"older's inequality, Bernstein, Cauchy--Schwarz, Corollary~\ref{C:lts}, and \eqref{JJ0},  
\begin{align*}
\eqref{E:pot1}&\lesssim \tfrac1J\|u\|_{L_t^\infty L_x^8}^4\int_I\sum_S \lambda_j^{3}\|u_{N_1}\|_{L_x^\infty}
	\|u_{N_2}\|_{L_x^\infty} \|u_{N_5}\|_{L_x^4} \|u_{N_6}\|_{L_x^4}\,dt \\
&\lesssim_u \tfrac1J\int_I \bigl[\sup_M M^{-2}\|P_M\uhi\|_{L_x^\infty}\bigr]^2\sum_S \lambda_j^{3}
	\tfrac{(N_1N_2)^2}{(N_5 N_6)^{\frac12}}\|u_{N_5}\|_{\dot H_x^{\frac32}} \|u_{N_6}\|_{\dot H_x^{\frac32}}\,dt \\
&\lesssim_u \tfrac1J\|\nsc u\|_{L_t^\infty L_x^2}^2K\lesssim_u \eta K.
\end{align*}

For \eqref{E:pot2} and \eqref{E:pot3}, we first note the following consequence of Sobolev embedding:
$$
\|\uhi(t)\|_{L_x^6}^6\lesssim \|\uhi(t)\|_{L_x^4}^2\|\uhi(t)\|_{L_x^8}^4\lesssim_u \|\nabla \uhi(t)\|_{L_x^2}^2.
$$
Thus we can estimate \eqref{E:pot2} as we did \eqref{scary32.5}; cf \eqref{est scary32.5}. Similarly, we can estimate \eqref{E:pot3} as we did \eqref{scary33}. This completes the proof of Lemma~\ref{L:potential}. \end{proof}

Finally, we turn to \eqref{E:massmass}. 
 
\begin{lemma}[Mass-mass term]\label{L:massmass} We estimate the contribution of \eqref{E:massmass} in three pieces:
\begin{align}
\label{E:A}
&\iiint\limits_{|x-y|\leq\lambda_0}-\Delta\Delta a(x-y)|\uhi(x)|^2|\uhi(y)|^2\,dx\,dy\,dt\geq 0, \\
\label{mm1}
&\biggl\vert\iiint \Delta\Delta a(x-y)\bigl[|u(x)|^2|u(y)|^2-|\uhi(x)|^2|\uhi(y)|^2\bigr]\,dx\,dy\,dt\biggr\vert \lesssim_u \eta K, \\
\label{mm3}
&\biggl|\iiint\limits_{|x-y|>\lambda_0}\Delta\Delta a(x-y)|\uhi(x)|^2|\uhi(y)|^2\,dx\,dy\,dt\biggr| \lesssim_u 
	\eta K.
\end{align}
\end{lemma}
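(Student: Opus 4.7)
\textbf{Plan for Lemma~\ref{L:massmass}.} The three pieces require rather different treatments.

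For \eqref{E:A}, the claim is essentially a pointwise positivity statement. On the region $|x-y|\leq\lambda_0$ we have $a(t,x-y)=|x-y|$ by construction, so I would simply compute, in the sense of distributions in $\mathbb{R}^4$,
\[
\Delta|x|=3|x|^{-1},\qquad \Delta\bigl(|x|^{-1}\bigr)=-|x|^{-3},
\]
which are valid since both $|x|^{-1}$ and $|x|^{-3}$ are locally integrable in four dimensions. Hence $-\Delta\Delta a(x-y)=3|x-y|^{-3}\geq 0$ on the region of integration, and integrating against the nonnegative density $|\uhi(x)|^{2}|\uhi(y)|^{2}$ yields the claim.

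For \eqref{mm1}, I would expand the difference $|u(x)|^{2}|u(y)|^{2}-|\uhi(x)|^{2}|\uhi(y)|^{2}$ in powers of $\ulo$, using $u=\uhi+\ulo$: every resulting quartic monomial carries at least one factor of $\ulo$. Then I would use the weight bound $|\Delta\Delta a(x-y)|\lesssim |x-y|^{-3}\chi_{|x-y|\leq\lambda_{J}}$ (obtained from \eqref{E:higher} and the biharmonic formula below) to reduce the estimation to a convolution-type integral. Each resulting term is controlled by a combination of Hardy--Littlewood--Sobolev, H\"older, Proposition~\ref{P:decay}, and the bounds in Corollary~\ref{C:lts} for $\ulo$ (in particular \eqref{E:ulo}). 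Smallness is exhibited by pairing the smallness of the relevant $\ulo$-norm with the geometric gain $(Re^{J})^{\ell}K^{\delta}\lesssim \eta K$ from \eqref{Vsmall}. The structure mirrors the analysis of \eqref{E:potentiallow}.

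For \eqref{mm3}, I would begin by computing $\Delta\Delta a$ explicitly. Using $a=n_{m}^{-1}w(n_{m}|x|)$ and the radial biharmonic formula in four dimensions,
\[
\Delta\Delta a=n_{m}^{3}w^{(4)}(n_{m}r)+6n_{m}^{2}w^{(3)}(n_{m}r)/r+3n_{m}w^{(2)}(n_{m}r)/r^{2}-3w_{r}(n_{m}r)/r^{3},
\]
whence \eqref{E:higher} gives $|\Delta\Delta a|\lesssim |x|^{-3}$ on $\{|x|>\lambda_{0}\}$, with the first three terms enjoying an additional $J^{-1}$-factor. I would then decompose the region of integration into dyadic shells $\lambda_{j}<|x-y|\leq\lambda_{j+1}$, on which $|\Delta\Delta a|\lesssim \lambda_{j}^{-3}$, and split the factor $|\uhi(x)|^{2}$ using the projections $\Pj,\Pjm,\Pc$ from Lemma~\ref{L:scary}. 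For the $\Pjm$ contribution I would apply Corollary~\ref{C:mass} together with the interchange of sums identity
\[
\sum_{j}\lambda_{j}^{-3}\|\Pjm\uhi\|_{L_{x}^{2}}^{2}
=\sum_{N\leq cN(t)}\|P_{N}\uhi\|_{L_{x}^{2}}^{2}\!\!\!\sum_{j:\,\lambda_{j}>N^{-1}}\!\!\!\lambda_{j}^{-3}
\lesssim \|\nsc P_{\leq cN(t)}u\|_{L_{x}^{2}}^{2}\lesssim \eta^{2},
\]
combined with \eqref{IMap}, yielding a net bound $\eta^{2}JK\lesssim \eta K$ by \eqref{Kbig}. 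For the $\Pc$ contribution, Bernstein and \eqref{E:Rbig} give $\sum_{j}\lambda_{j}^{-3}\|\Pc\uhi\|_{L_{x}^{2}}^{2}\lesssim (\eta/J)^{3}$, producing $\lesssim \eta^{3}K/J^{2}\lesssim \eta K$ after applying Corollary~\ref{C:mass}. For the $\Pj$ piece I would avoid Corollary~\ref{C:mass} (and its $\log$-loss) and instead use the maximal-norm control \eqref{E:uhi1} from Corollary~\ref{C:lts}, following the template of \eqref{scary32} with the frequency sum carried out against the time-integrated bound $\|\sup_{M}M^{-2}\|P_{M}\uhi\|_{L_{x}^{\infty}}\|_{L_{t}^{2}}^{2}\lesssim_{u}K$.

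The main obstacle will be the $\Pj$ contribution. Unlike in Lemma~\ref{L:scary}, the outer-region weight $|\Delta\Delta a|\sim 3w_{r}(n_{m}|x|)/|x|^{3}$ is \emph{not} automatically $J^{-1}$-small, so there is no direct cancellation against the $\log$-loss of Corollary~\ref{C:mass}; the $\Pj$-piece smallness must instead be extracted from the $\dot H^{\frac{3}{2}}$-bound, Bernstein, and careful $(N_{1},N_{2},j)$-summation against the maximal norm, in the spirit of \eqref{scary32}.
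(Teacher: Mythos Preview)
Your handling of \eqref{E:A} and the overall shell/frequency decomposition for \eqref{mm3} match the paper. There are, however, two genuine gaps.

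\textbf{The gap in \eqref{mm1}.} Your direct route via $|\Delta\Delta a|\lesssim |x-y|^{-3}\chi_{\{|x-y|\leq\lambda_J\}}$ cannot close. After expanding the difference, a generic term carries only one factor of $\ulo$; placing it in $L_t^4L_x^\infty$ (or $\|\nsc\ulo\|_{L_{t,x}^3}$) leaves three undifferentiated copies of $u$ to absorb an $|x-y|^{-3}$ kernel. Hardy--Littlewood--Sobolev would then force exponents averaging $L_x^{16/5}$, which lies \emph{below} the threshold $40/11$ of Proposition~\ref{P:decay}. Equivalently, the Young route yields only a single power of $\lambda_J\sim n_m^{-1}$, so the time integral becomes $\int_I n_m^{-p}\,dt$ with $p<3$, and this quantity is not controlled (take $N(t)\equiv N_0\gg 1$: then $\int n_m^{-p}\sim |I|N_0^{-p}=K N_0^{3-p}\to\infty$). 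The paper avoids this by integrating by parts twice, exploiting $\partial_{y_k}a(x-y)=-\partial_{x_k}a(x-y)$ to trade $\Delta\Delta a$ for $\Delta a$ (with $\|\Delta a\|_{L_x^1}\lesssim\lambda_J^3$) at the price of one derivative landing on each of the $x$- and $y$-factors. The resulting integrand $\partial_{x_k}\partial_{y_k}L(x,y)$ is then a sum of terms of exactly the shape already controlled in \eqref{scary31}.

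\textbf{The gap in the $\Pjm$ piece of \eqref{mm3}.} Your bound $\sum_j\lambda_j^{-3}\|\Pjm\uhi\|_{L_x^2}^2\lesssim\eta^2$ is correct, and Corollary~\ref{C:mass} contributes $JK$; but the conclusion $\eta^2 JK\lesssim\eta K$ is false. Condition \eqref{Kbig} gives $\eta\geq(\log K)^{-1}$, while $J\sim\alpha\log K$; with $\eta$ fixed and $K\to\infty$ one has $\eta J\to\infty$. You correctly flag in your final paragraph that the $3a_r/r^3$ part of $\Delta\Delta a$ lacks a $J^{-1}$ factor, but you only worry about this for the $\Pj$ piece. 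The resolution (implicit in the paper, whose absolute-value formulation of \eqref{mm3} is slightly loose) is a sign observation: since $a_r\geq 0$, the term $+3a_r/r^3$ in $-\Delta\Delta a$ is nonnegative, hence can be dropped from the lower bound needed in the Morawetz argument (or grouped with \eqref{E:A}). What remains of $\Delta\Delta a$ on $\{|x|>\lambda_0\}$ is genuinely $O(J^{-1}r^{-3})$ by \eqref{E:higher}, and then your shell decomposition acquires the paper's $\tfrac{1}{J}$ prefactor, giving $\tfrac{1}{J}\cdot\eta^2\cdot JK=\eta^2K\lesssim\eta K$.
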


\begin{remark} The term appearing in \eqref{E:A} will give the left-hand side of the interaction Morawetz inequality in Proposition~\ref{P:IM}.\end{remark}

\begin{proof}  As $a=|x|$ for $|x|\leq\lambda_0$, we have $-\Delta\Delta a=3|x|^{-3}$ in this region. Thus \eqref{E:A} holds.

For \eqref{mm1}, we use the fact that $\partial_{y_k}a(t,x-y)=-\partial_{x_k}a(t,x-y)$ and integrate by parts. Specifically, writing
$$
L(x,y)=|u(x)|^2|u(y)|^2-|\uhi(x)|^2|\uhi(y)|^2
$$
and integrating by parts, we are left to estimate
$$
\iiint\limits_{|x-y|\leq\lambda_J} \frac{\bigl|\partial_{y_k}\partial_{x_k} L(x,y)\bigr|}{|x-y|}\,dx\,dy\,dt.
$$
However, expanding $L(x,y)$ and applying the derivatives, this term can be seen to be a sum of the types of terms already estimated when dealing with \eqref{scary31}. Thus \eqref{mm1} holds.

For \eqref{mm3}, we again proceed as in Lemma~\ref{L:scary}. In particular, 
\begin{align}
\eqref{mm3}&\lesssim \tfrac{1}{J}\sum_{j=0}^J \iiint\limits_{\lambda_j<|x-y|\leq\lambda_{j+1}} \lambda_j^{-3}|\Pj\uhi(x)|^2 |\uhi(y)|^2\,dx\,dy\,dt \label{mm31} \\
\label{mm32}
&+\tfrac{1}{J}\sum_{j=0}^J \iiint\limits_{\lambda_j<|x-y|\leq\lambda_{j+1}} \lambda_j^{-3}|\Pjm\uhi(x)|^2 |\uhi(y)|^2\,dx\,dy\,dt \\
\label{mm33}
&+\tfrac{1}{J}\sum_{j=0}^J \iiint\limits_{\lambda_j<|x-y|\leq\lambda_{j+1}} \lambda_j^{-3}|\Pc\uhi(x)|^2 |\uhi(y)|^2\,dx\,dy\,dt,
\end{align}
where $c$ is as in \eqref{IMap}. 

For \eqref{mm31}, define
$$
S=\{N_1,\dots,N_4,j\ |\, K^{-\frac15}\leq N_1\leq N_2\leq N_3 \leq N_4,\ N_2\leq(\lambda_j)^{-1},\ j=0,\dots,J\}.
$$
By Young's inequality, H\"older's inequality, Bernstein, Cauchy--Schwarz, Corollary~\ref{C:lts}, and \eqref{JJ0}, 
\begin{align*}
\eqref{mm31}&\lesssim \tfrac{1}{J}\int_I \sum_S \lambda_j \|u_{N_1}\|_{L_x^\infty} \|u_{N_2}\|_{L_x^\infty}
	\|u_{N_3}\|_{L_x^2}\|u_{N_4}\|_{L_x^2}\,dt \\
&\lesssim\tfrac{1}{J}\int_I \bigl[\sup_M M^{-2}\|P_M\uhi\|_{L_x^\infty}\bigr]^2\sum_S \lambda_j
	\tfrac{(N_1N_2)^2}{(N_3 N_4)^{\frac32}} \|u_{N_3}\|_{\dot H_x^{\frac32}}\|u_{N_4}\|_{\dot H_x^{\frac32}}\,dt \\
&\lesssim_u \tfrac{1}{J} \|u\|_{L_t^\infty \dot H_x^{\frac32}}^2 K \lesssim_u \eta K.
\end{align*}

We can now estimate \eqref{mm32} and \eqref{mm33} by proceeding just as we did \eqref{scary32.5} and \eqref{scary33}. In particular, the estimation of \eqref{mm32} relies on Bernstein and \eqref{IMap}, while the estimation of \eqref{mm33} relies on Bernstein and \eqref{E:Rbig}. This completes the proof of \eqref{L:massmass}.  \end{proof}


\subsection{Interaction Morawetz inequality}\label{S:IM}

We now collect our estimates to deduce the following interaction Morawetz inequality. 

\begin{proposition}[Interaction Morawetz]\label{P:IM} Suppose $u:[0,T_{max})\times\R^4\to\C$ is an almost periodic solution to \eqref{nls} as in Theorem~\ref{T:AP} such that \eqref{E:KI} holds. 

Let $I\subset[0,T_{max})$ be a compact time interval, which is a contiguous union of characteristic subintervals. Define $K$ as in \eqref{E:QSK} and $\uhi$ as in \eqref{def:hi}.

Let $0<\eta\ll 1$. For $I$ sufficiently large inside $[0,T_{max})$,
\begin{equation}\label{E:IM}
\int_I \iint\limits_{|x-y|\leq\frac{2C(u)}{N(t)}}\frac{|\uhi(x)|^2|\uhi(y)|^2}{|x-y|^3}\,dx\,dy\,dt\lesssim_u \eta K,
\end{equation}
where $C(u)$ is as in \eqref{a lb}. The implicit constant does not depend on $I$. 
\end{proposition}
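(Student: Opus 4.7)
The plan is to apply the fundamental theorem of calculus to the Morawetz action $M(a;t)$ over $I$, expand $\tfrac{d}{dt}M(a;t)$ via Proposition~\ref{P:identity}, and then assemble the estimates already proved in Lemmas~\ref{L:dta}--\ref{L:massmass}. The starting point is the identity
$$\int_I \tfrac{d}{dt}M(a;t)\,dt = M(a;\sup I)-M(a;\inf I),$$
so that the a priori bound \eqref{E:boundM2} immediately gives $\bigl|\int_I \tfrac{d}{dt}M(a;t)\,dt\bigr|\lesssim_u \eta K$.

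Next I would split each of the terms \eqref{E:scary}, \eqref{E:potential}, and \eqref{E:massmass} according to whether $|x-y|\leq\lambda_0$ or $|x-y|>\lambda_0$. In the inner region $a(t,x)=|x|$, so each of the three contributions is nonnegative---by \eqref{scary1}, \eqref{E:pos1}, and \eqref{E:A} respectively---and the inner piece of \eqref{E:massmass} reduces to a positive constant multiple of $\iiint_{|x-y|\leq\lambda_0}|x-y|^{-3}|\uhi(x)|^2|\uhi(y)|^2$, using $-\Delta\Delta|x|=3|x|^{-3}$ in four dimensions. Every remaining piece---\eqref{E:dta} (Lemma~\ref{L:dta}), the outer portion of \eqref{E:scary} (Lemma~\ref{L:scary}), the outer portion of \eqref{E:potential} together with the low-frequency swap \eqref{E:potentiallow} (Lemma~\ref{L:potential}), and the outer portion of \eqref{E:massmass} together with the low-frequency swap \eqref{mm1} (Lemma~\ref{L:massmass})---is bounded by $\eta K$.

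Rearranging, all positive contributions on the right are controlled by the boundary term plus the errors, and in particular
$$\int_I \iint\limits_{|x-y|\leq\lambda_0}\frac{|\uhi(x)|^2|\uhi(y)|^2}{|x-y|^3}\,dx\,dy\,dt\lesssim_u \eta K.$$
To pass from $\lambda_0$ to $2C(u)/N(t)$, I invoke positivity of the integrand together with a domain inclusion: by \eqref{radius} and \eqref{Klb}, $\lambda_0(t)=R/n_m(t)\geq 2C(u)/N(t)$ uniformly in $t\in I$, so $\{|x-y|\leq 2C(u)/N(t)\}\subseteq\{|x-y|\leq\lambda_0\}$ and restricting the domain only decreases the integral, yielding \eqref{E:IM}.

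At this stage the main obstacles have already been dealt with in the preceding subsections---most notably the logarithmic failure of the double Duhamel argument (Proposition~\ref{L:mass}), the need to rescale the weight by a well-behaved function (Section~\ref{S:nt}), and the exploitation of energy-supercriticality in the high-frequency tails. The only remaining subtlety is bookkeeping: keeping signs consistent (so that the positive mass-mass inner term sits on the correct side of the inequality) and respecting the two-scale structure $2C(u)/N(t) \leq \lambda_0 \leq \lambda_J$ set up in Section~\ref{S:parameters}, which is exactly what makes the final domain-restriction step legitimate.
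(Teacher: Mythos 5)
Your proposal is correct and follows the paper's own proof essentially verbatim: the fundamental theorem of calculus plus \eqref{E:boundM2} bounds the boundary terms by $\eta K$, the sign information in the region $\{|x-y|\leq\lambda_0\}$ (where $a=|x|$) lets you keep the coercive term \eqref{E:A} while discarding the other nonnegative contributions, the error terms are exactly those controlled in Lemmas~\ref{L:dta}--\ref{L:massmass}, and the passage from $\lambda_0$ to $2C(u)/N(t)$ via \eqref{radius} and \eqref{Klb} is precisely how the paper concludes. No gaps.
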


\begin{proof} We set up the interaction Morawetz action $M(t;a)$ as in Section~\ref{S:setup}, constructing the rescaling function $n(t)$ as in Section~\ref{S:nt} and choosing parameters as in Section~\ref{S:parameters}. By the fundamental theorem of calculus and \eqref{E:boundM2}, we have
$$
\int_I \tfrac{d}{dt}M(t;a)\,dt\lesssim_u \eta K.
$$
We now recall the identity in Proposition~\ref{P:identity} and collect the estimates in Lemma~\ref{L:dta} and Lemmas~\ref{L:scary}--\ref{L:massmass}. Holding on to the term appearing in \eqref{E:A}, we deduce
\begin{equation}\label{collect1}
\int_I \iint\limits_{|x-y|\leq\lambda_0} \frac{|\uhi(x)|^2 |\uhi(y)|^2}{|x-y|^3}\,dx\,dy\,dt\lesssim_u \eta K.
\end{equation}
Using the definition of $\lambda_0$ in \eqref{E:lambda}, as well as \eqref{radius} and \eqref{Klb}, we deduce \eqref{E:IM}. \end{proof}

We can now complete the proof of Theorem~\ref{T:QS}.

\begin{proof}[Proof of Theorem~\ref{T:QS}] Suppose $u:[0,T_{max})\times\R^4\to\C$ is an almost periodic solution as in Theorem~\ref{T:AP} such that \eqref{E:KI} holds. Let $I$, $K$, $\eta$, and $C(u)$ be as in Proposition~\ref{P:IM}. By \eqref{a lb},
\begin{align}
\int_I \iint\limits_{|x-y|\leq\frac{2C(u)}{N(t)}} \frac{|\uhi(x)|^2|\uhi(y)|^2}{|x-y|^3}\,dx\,dy\,dt&
	 \gtrsim_u \int_I N(t)^3\biggl(\int_{|x-x(t)|\leq\frac{C(u)}{N(t)}} |\uhi(x)|^2\,dx\biggr)^2\,dt \nonumber \\
&\gtrsim_u \int_I N(t)^{-3}\,dt\gtrsim_u K. \label{LB}
\end{align}
Combining \eqref{E:IM} and \eqref{LB} yields
$$
K\lesssim_u \eta K.
$$
We now choose $\eta=\eta(u)$ sufficiently small to deduce $K=0$, which is a contradiction. This completes the proof of Theorem~\ref{T:QS}. \end{proof}

\begin{center}

\end{center}

\end{document}